\documentclass[preprint,12pt]{elsarticle}

\usepackage{enumerate}
\usepackage{amssymb}
\usepackage{lipsum}
\usepackage[a4paper, total={7.0in, 9.8in}]{geometry}
\usepackage{booktabs}
\usepackage{amsmath,amssymb,url}

\usepackage{bm}

\usepackage{amsmath}
\usepackage{enumitem} %%% Use the macros provided by this package when list labels stick out into the left margin. See the list in Section 7.
\usepackage{graphics} %%% (Optional. Use for incorporating separate graphics files.)
\usepackage{array}
\usepackage{dsfont}
\usepackage[all]{xy}
\usepackage{amsthm}
\usepackage{tikz}
\usepackage{bbding}
\usepackage{graphicx}

\usepackage{mathrsfs}

\numberwithin{equation}{section}
\newtheorem{theorem}{Theorem}[section]
\newtheorem{lemma}[theorem]{Lemma}
\newtheorem{proposition}[theorem]{Proposition}
\newtheorem{corollary}[theorem]{Corollary}

\newcommand\blfootnote[1]{%
\begingroup
\renewcommand\thefootnote{}\footnote{#1}%
\addtocounter{footnote}{-1}%
\endgroup
}
\journal{}

\begin{document}

\begin{frontmatter}

%% Title, authors and addresses

%% use the tnoteref command within \title for footnotes;
%% use the tnotetext command for theassociated footnote;
%% use the fnref command within \author or \address for footnotes;
%% use the fntext command for theassociated footnote;
%% use the corref command within \author for corresponding author footnotes;
%% use the cortext command for theassociated footnote;
%% use the ead command for the email address,
%% and the form \ead[url] for the home page:
%% \title{Title\tnoteref{label1}}
%% \tnotetext[label1]{}
%% \author{Name\corref{cor1}\fnref{label2}}
%% \ead{email address}
%% \ead[url]{home page}
%% \fntext[label2]{}
%% \cortext[cor1]{}
%% \affiliation{organization={},
%%             addressline={},
%%             city={},
%%             postcode={},
%%             state={},
%%             country={}}
%% \fntext[label3]{}

\title{The existence of the compatible lattice order on the complex field $\mathbb{C}$}

%% use optional labels to link authors explicitly to addresses:
%% \author[label1,label2]{}
%% \affiliation[label1]{organization={},
%%             addressline={},
%%             city={},
%%             postcode={},
%%             state={},
%%             country={}}
%%
%% \affiliation[label2]{organization={},
%%             addressline={},
%%             city={},
%%             postcode={},
%%             state={},
%%             country={}}

\author{Zhengmao He}
\address{School of Sciences, Southwest Petroleum University,Chengdu 610500,Sichuan China}

\begin{abstract} In this paper, we give an affirmative answer to the open question posed by G.~Birkhoff and R.~S.~Pierce in 1956 concerning the existence of a compatible lattice order on the complex field $\mathbb{C}$. Using valuation rings theory, basis extension in infinite-dimensional vector spaces and extension of field embeddings to algebraic extensions, we obtain that for every real $t>1$ transcendental over $\mathbb{Q}$, the compatible lattice order can be chosen such that $0\prec1$, $0\vee i=t$, and $1\wedge t=0$. Consequently, $\mathbb{C}$ admits at least $2^{\aleph_0}$ distinct compatible lattice orders, each extending the usual order on $\mathbb{Q}$. 
\end{abstract}
\begin{keyword}  compatible lattice order; complex field $\mathbb{C}$; valuation rings theory; field embedding; infinite-dimensional vector space\\
\vspace*{0.2cm}
{\em Mathematics Subject Classification:} 06F25; 12J15; 12J20
\end{keyword}

%\subjclass[{\normalfont 2000}]{54B20; 06B35; 06F30}

\end{frontmatter}

\blfootnote{This work is supported by the National Natural Science Foundation of China (Grant nos.12601906) and the Sichuan Science and Technology Program (Grant No. 2026NSFSC0784).}
\blfootnote{E-mail address: hezhengmaomath@163.com (Z.M.He).}

%% \linenumbers

%% main text
\section{Introduction}
\label{}

The interaction between order and multiplication is a central theme in the theory of ordered algebraic structures. A partial order $\preceq$ on a field $L$ is compatible with its ring operations if translation preserves order and the product of two nonnegative elements is nonnegative. Thus $x\preceq y$ implies $x+a\preceq y+a$, and $0\preceq x$, $0\preceq y$ imply $0\preceq xy$. Such an order is a lattice order if every pair of elements has a least upper bound and a greatest lower bound, denoted by $x\vee y$ and $x\wedge y$. Throughout this paper, a lattice-ordered field means a field equipped with these ring-order and lattice properties. The order need not be total, and positivity of inverses is not included as an additional axiom.

The familiar obstruction to ordering $\mathbb{C}$ concerns total field orders. In a totally ordered field every nonzero square is positive, which is incompatible with $i^2=-1$. For a partial order, however, an element may be incomparable with zero, so this argument does not exclude a compatible lattice order. Conversely, the coordinate order has joins and meets but is incompatible with multiplication: it makes $i$ nonnegative while $i^2=-1$ is negative. These two observations identify the difficulty. One must construct enough order structure to obtain least and greatest bounds while retaining multiplicative closure of the positive cone.

In their 1956 paper on lattice-ordered rings, G.~Birkhoff and R.~S.~Pierce posed the question whether the complex field admits a lattice order compatible with its ring operations~\cite{BP1956}. This question, usually called the Birkhoff--Pierce problem for the complex field, belongs to a broader programme concerning partial orders on familiar fields and division rings. In 1963, Fuchs formulated the related Problem~31, asking for a description of the directed orders on the complex and quaternion fields~\cite[p.~212]{Fuchs1963}. Directedness requires common upper and lower bounds, whereas a lattice order requires least upper and greatest lower bounds. Every lattice order is directed, but the converse fails; maintaining this distinction is essential when comparing the subsequent existence results.

An important early development concerned the real field. Wilson constructed non-total lattice orders on subfields of $\mathbb{R}$ and proved that $\mathbb{R}$ admits $2^{\mathfrak c}$ distinct lattice orders, where $\mathfrak c=2^{\aleph_0}$ is the cardinality of the continuum~\cite{Wilson1976}. Thus the usual total order does not exhaust the lattice orders compatible with real-field arithmetic. This result established considerable flexibility on the real side, but it did not supply an order on $\mathbb{C}$, where the presence of $i$ prevents the use of total-order arguments. Schwartz subsequently investigated the structure of lattice-ordered fields, particularly their relation to totally ordered subfields and to algebraic extensions~\cite{Schwartz1986}. As recalled by Schwartz and Yang~\cite{SY2023}, this work also gives an obstruction in the algebraic-number setting: a number field admitting a compatible lattice order must be real.

The directed version of the existence problem developed further through constructions allowing negative squares. Yang proved the existence of directed rings and algebras with this feature~\cite{Yang2006}. Schwartz and Yang then established the existence of compatible directed partial orders on $\mathcal{C}$~\cite{SY2011}. Their later characterization states that a field admits a compatible directed partial order precisely when it has characteristic zero and is either formally real or has positive transcendence degree over $\mathbb{Q}$~\cite[Theorem~4]{SY2023}. In particular, transcendence permits directed orders on fields that cannot be totally ordered. This criterion settles the directed existence question, but it does not provide joins and meets.

Other work has studied partial orders on quadratic extensions and quaternion algebras while keeping a specified ordered coefficient domain fixed; see, for example, Ma~\cite{Ma2015}. More recently, Wang, Yuan, Zhang, and Zhu constructed explicit directed positive cones on $\mathbb{C}$ using integral closures of local rings~\cite{WYYZ2026}. Their nonzero positive elements are closed under inversion. They proved that none of the orders in that family is a lattice order, since $0$ and $i$ have no least upper bound. Their September 2026 preprint accordingly still records the general lattice-order problem as open and conjectures a negative answer. The failure of joins for those cones is a statement about that construction, rather than a general obstruction to all compatible partial orders on $\mathbb{C}$.

In the present paper, we give an affirmative answer to the Birkhoff--Pierce problem. We construct lattice orders on the usual complex field and prescribe specific lattice identities involving the original complex numbers $1$, $i$, and a chosen transcendental real parameter. More precisely, our principal result is the following.

\begin{theorem}
For every real number $t>1$ transcendental over $\mathbb{Q}$, there exists a partial order $\preceq$ on the usual field $\mathbb{C}$ that is compatible with addition and multiplication and makes $\mathbb{C}$ a lattice. Its restriction to $\mathbb{Q}$ is the usual order, and it satisfies $0\prec1$, $0\vee i=t$, and $1\wedge t=0$. Moreover, the construction preserves the order and the binary lattice operations on a specified countable algebraically closed subfield containing $\mathbb{Q}(t,i)$.
\end{theorem}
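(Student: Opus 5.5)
The plan is to realize the order as a \emph{coordinatewise} order with respect to a suitable basis, in the spirit of Wilson's lattice orders on subfields of $\mathbb{R}$~\cite{Wilson1976} and Schwartz's structure theory~\cite{Schwartz1986}. Concretely, I would look for a totally ordered subfield $F\subseteq\mathbb{C}$ (a subfield of $\mathbb{R}$ with the induced usual order, containing $\mathbb{Q}$ but not $t$) and an $F$-basis $B$ of $\mathbb{C}$ with two properties. First, $1\in B$, $t\in B$ and $s:=t-i\in B$. Second, $B\cdot B\subseteq P$, where
\[
P=\Bigl\{\textstyle\sum_{b\in B}a_b b:\ a_b\ge 0 \text{ in } F\Bigr\}.
\]
Setting $x\preceq y$ iff $y-x\in P$ then gives the following. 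The order is compatible with addition automatically. It is compatible with multiplication by bilinearity and the second property. It is a lattice because joins and meets are computed coordinatewise, since $\mathbb{C}\cong\bigoplus_{b\in B}F$ as ordered $F$-spaces and $F$ is totally ordered. The restriction to $\mathbb{Q}\subseteq F$ is the usual order, and $0\prec 1$. Since $1,t$ are distinct basis vectors, $1\wedge t=0$. Since $i=t-s$ with $t,s$ distinct basis vectors, $0\vee i=t$. Different $t$ give different orders, which yields the $2^{\aleph_0}$ count.

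The construction of $B$ splits into a countable stage and an extension stage. First, I would fix a countable algebraically closed subfield $K\supseteq\mathbb{Q}(t,i)$ and a countable real subfield $F_0\subseteq K\cap\mathbb{R}$. I would then build an $F_0$-basis $B_K$ of $K$ containing $1,t,t-i$ with $B_KB_K\subseteq P_K$. This is done by enumerating $K=\bigcup_n K_n$ as an increasing union of finitely generated subfields and extending a partial independent set one element at a time. A new vector is always chosen as $b+\lambda c$, with $\lambda$ large or small in $F_0$, so that every new product has a dominant term with positive coefficient. To control this dominance, I would use a valuation ring $\mathcal O$ of $K$ (for example one centered so that $t$ has a prescribed value). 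In it, products of basis elements are compared by leading terms in the associated graded ring. The goal is that, after rescaling by positive elements of $F_0$, the leading term of each product is a basis element and all lower-order corrections can be absorbed into positive combinations. Second, I would pass from $K$ to $\mathbb{C}$. Since $\mathbb{C}$ is the algebraic closure of $K(T)$ for a transcendence basis $T$ of cardinality $\mathfrak c$, I would extend along a well-ordered chain of subfields. At each step I would use extension of field embeddings to algebraic extensions, so that the valuation and the leading-term bookkeeping extend. I would also use basis extension in infinite-dimensional vector spaces, so that $B_K$ enlarges to a basis of each new field while keeping $BB\subseteq P$. Because $B_K\subseteq B$ and $P\cap K=P_K$, the order and the binary lattice operations on $K$ are preserved, which is the final clause of the theorem.

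The main obstacle is closure of $P$ under multiplication near the prescribed elements. Already $s^2=t^2-1-2ti$ and $ts=t^2-ti$ must be nonnegative combinations of basis vectors, while $1,t,s$ are themselves basis vectors. This forces the expansions of $t^2$ and $ti$ to be arranged carefully: for instance, one can try to put $t^2$ and $ti$ (or suitable positive multiples of them) into $B$ and check that $-1$ is compensated. Since $-1\notin P$, one needs $s^2$ to land in the cone through a dominant term, and $i^2=-1$ is exactly where a naive choice breaks. What I would try first is to choose the valuation so that $v(t)<0$ strongly dominates $v(1)$ and $v(i)=0$. Then every product of basis elements has a leading monomial in powers of $t$ with positive coefficient. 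I would then define $B$ so that each basis vector is \emph{defined} as a positive combination aligned with this leading structure, in the style of a Gröbner/initial-form argument, and so that a negative constant like $-1$ always appears as a strictly lower-order correction attached to a basis vector with larger coefficient.
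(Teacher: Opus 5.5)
Your outline has the right countable field, the right first layer $\{1,t,t-i\}$, the right coordinate computation of $0\vee i=t$ and $1\wedge t=0$, and a valuation with $\delta(t)=1$, $\delta(i)=0$. The device that makes the countable recursion close is missing, however. Positive leading terms are necessary but not sufficient: membership in $P_B$ also requires nonnegative coefficients in every lower-order term, and coordinates on the already-built part of the basis cannot be revised later. Your concrete suggestion already fails, since with $t^2,ti\in B$ the products $t(t-i)=t^2-ti$ and $(t-i)^2=t^2-2ti-1$ have negative coordinates.

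The paper's idea is an affine normalization. It layers $B$ by integer degree and requires $\ell_n(b)=1$ on $B_n$. Every stage-$n$ target then lies on the hyperplane $\ell_n=1$, while the old span lies in $\ker\ell_n$. These targets are $t^n$, all $bc$ with $b\in B_r$, $c\in B_s$, $r,s\geq1$, $r+s=n$, and one spanning target $t^{N_j}-x_j$ with $N_j>\delta(x_j)$. The finite extension lemma takes $b_j=Ge_j-M\sum_re_r-h$ with $G=1+nM$ and a single correction $h\in P_B$. Each target then equals $(v_a+h)+\sum_j\alpha_{aj}b_j$ with $\alpha_{aj}\geq0$ and $\sum_j\alpha_{aj}=1$. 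This is how the $-1$ is absorbed, e.g.\ $B_2=\{t^2+it-1,(t-i)^2\}$. Because $r,s<n$, every product $bc$ is guaranteed to become a target at the finite later stage $r+s$. Your one-vector-at-a-time choice of $b+\lambda c$ gives no such guarantee.

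The passage from $K$ to $\mathbb{C}$ is the more serious gap. Extending embeddings and valuations says nothing about positivity. At each successor step the new basis vectors need nonnegative products with every old basis vector, and there are uncountably many of these at later steps. The finite lemma uses finiteness essentially, because $M$ and $h$ are chosen to dominate finitely many coefficient vectors. For instance, take infinitely many targets on $\{\lambda=1\}$ that span a finite-dimensional space modulo $V$ and are unbounded there: they lie in the convex hull of no basis of $W/V$ on which $\lambda=1$. A direct route would need a genuinely transfinite schedule, such as a well-ordered monoid of degrees in which each degree has finitely many decompositions, and your plan supplies none.

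The paper avoids any uncountable construction. It takes the ultrapower $E=K^{\mathbb{N}}/\mathcal{U}$, ordered by $[(x_n)]\leq_E[(y_n)]$ iff $\{n:x_n\leq_Ky_n\}\in\mathcal{U}$. This is a lattice-ordered, algebraically closed field of characteristic $0$ and cardinality $\mathfrak c$, and $j$ embeds $K$ in it as a lattice-ordered subfield. Matching transcendence bases of size $\mathfrak c$ over $j(K)$ and $K$, and extending along the algebraic extension, gives $\Phi:E\to\mathbb{C}$ with $\Phi\circ j=\mathrm{id}_K$. The order is then transported. That order is not coordinatewise over any archimedean $F$ with $1\in B$, since $\Phi([(n)_n])$ exceeds every integer, so your framework asks for strictly more than the paper proves. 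Separately, an $F_0$-basis of $K$ stays $F$-independent only if $F$ and $K$ are linearly disjoint over $F_0$, and $t\notin F$ does not ensure this.
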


Here $x\prec y$ means $x\preceq y$ and $x\neq y$. Since the value of $0\vee i$ is determined by the order relation, distinct parameters $t$ produce distinct compatible lattice orders. There are $\mathfrak c$ real transcendental numbers greater than $1$, so $\mathbb{C}$ admits at least $\mathfrak c$ distinct such orders. This is a lower bound for distinct order relations on the fixed complex field; no classification up to field automorphism is asserted.

Our approach separates the construction into a countable algebraic stage and a passage to the full complex field. Fix the parameter $t$ and put $F=\mathbb{Q}(t)$. Let $K$ be the set of elements of $\mathbb{C}$ algebraic over $F$. Then $K$ is countable and algebraically closed. Countability makes it possible to enumerate all its elements and to satisfy their spanning requirements one at a time. At this stage the objective is stronger than the construction of a directed cone: we seek a $\mathbb{Q}$-basis $B$ of $K$ for which every product of basis elements is a finite nonnegative rational linear combination of basis elements.

Such a basis immediately gives a lattice order. Its nonnegative coordinate cone $P_B$ defines $x\leq_K y$ by $y-x\in P_B$, and the join and meet are obtained by taking coordinatewise maxima and minima. Multiplicative compatibility is equivalent to the required nonnegativity of the multiplication coefficients on basis products. An arbitrary basis offers no reason for those coefficients to be nonnegative. The main constructive issue is therefore to enlarge a basis while imposing finitely many positivity requirements and preserving all coordinate calculations already made.

We resolve this issue by a finite basis-extension lemma. Suppose an existing basis spans a subspace $V$, and a finite set of new targets lies in an affine hyperplane on which a rational linear functional has value $1$, while the functional vanishes on $V$. A suitable change of the new basis vectors, together with a common correction in the old nonnegative cone, places all targets in the enlarged nonnegative cone. The enlarged basis contains the original basis, and the new cone intersects $V$ in exactly the old cone. This finite-dimensional calculation is the mechanism that preserves earlier stages. Its affine normalization also explains the role of valuation theory in the argument.

On $F$, the degree of a nonzero rational function is $\deg p-\deg q$ for $\frac{p(t)}{q(t)}$ with nonzero $p,q\in\mathbb{Q}[x]$. The negative of this function is the valuation at infinity. Extending it to the algebraic field $K$ gives a rational-valued degree function $\delta$, with $\delta(t)=1$, $\delta(q)=0$ for $q\in\mathbb{Q}^\times$, $\delta(xy)=\delta(x)+\delta(y)$, and $\delta(x+y)\leq\max\{\delta(x),\delta(y)\}$. Standard valuation-extension results and the torsion property of value-group extensions in algebraic field extensions explain why the values can be normalized in $\mathcal{Q}$; see Engler and Prestel~\cite{EP2005}. We also give the requisite construction through a dominating valuation ring.

Write $\mathcal{O}=\{x\in K:\delta(x)\leq0\}$ and $m=\{x\in K:\delta(x)<0\}$, with $\delta(0)=-\infty$. The residue field $k=\mathcal{O}/m$ contains a canonical copy of $\mathbb{Q}$. For each nonnegative integer $n$, the subspace $K_n=\{x\in K:\delta(x)\leq n\}$ carries a linear map $\ell_n(x)=\operatorname{res}(x/t^n)$ into $k$. This map vanishes exactly on the elements of degree less than $n$ and satisfies $\ell_n(t^n)=1$. Furthermore, the maps respect multiplication across the filtration: $\ell_{r+s}(xy)=\ell_r(x)\ell_s(y)$ when $x\in K_r$ and $y\in K_s$. These properties allow products and spanning targets to be normalized by the same functional.

The inductive construction produces finite layers $B_n$, starting with $B_0=\{1\}$ and $B_1=\{t,t-i\}$, such that $\delta(b)=n$ and $\ell_n(b)=1$ for $b\in B_n$. At stage $n$, we impose positivity on $t^n$ and on the products $bc$ with $b\in B_r$, $c\in B_s$, $r,s\geq1$, and $r+s=n$. Both factors then belong to earlier stages, so these product requirements introduce no circular dependence. To ensure that the final basis spans $K$, choose an enumeration $K=\{x_1,x_2,\ldots\}$ and strictly increasing positive integers $N_j>\delta(x_j)$. At stage $N_j$, include the additional target $t^{N_j}-x_j$. Once both this target and $t^{N_j}$ belong to the constructed span, so does $x_j$.

Every target at stage $n$ has $\ell_n$-value $1$, while every element of the old span has $\ell_n$-value zero. Although $k$ need not equal $\mathbb{Q}$, the restriction of $\ell_n$ to the space generated by the old span and these targets takes values in the canonical copy of $\mathbb{Q}$. The finite basis-extension lemma therefore applies over precisely the required scalar field. At the limit, every element of $K$ has entered the span and every product of basis elements has entered the nonnegative cone. The union $B=\bigcup_{n\geq0}B_n$ is consequently a positive basis with the desired multiplication property. The initial choice of $B_1$ yields $i=t-(t-i)$, and the coordinate lattice formulas give $0\vee_K i=t$ and $1\wedge_K t=0$.

It remains to obtain an order on all of $\mathbb{C}$. Rather than carry out the same finite scheduling directly through an uncountable construction, we pass to the ultrapower $E=K^\mathds{N}/\mathcal{U}$ for a nonprincipal ultrafilter $\mathcal{U}$ on $\mathbb{N}$. This step preserves the field operations, the order relation, and the binary lattice operations simultaneously. The usual preservation principle for ultraproducts provides the general model-theoretic background~\cite{Marker2002}; here the required properties are verified directly on representatives. In particular, $E$ is algebraically closed of characteristic zero, and the diagonal embedding $j:K\longrightarrow E$ preserves and reflects order and preserves joins and meets.

The cardinality calculation is equally important. Countability of $K$ gives $|E|\leq\mathfrak c$, and a coding of binary sequences by their finite initial segments gives the reverse inequality. Thus $|E|=\mathfrak c$. Since $K$ is countable, both $E$ and $\mathbb{C}$ have transcendence degree $\mathfrak c$ over their respective copies of $K$. Matching transcendence bases gives an isomorphism of the corresponding rational function fields. An embedding-extension argument for algebraic extensions then produces a field embedding $\Phi:E\longrightarrow\mathbb{C}$ satisfying $\Phi(j(a))=a$ for every $a\in K$. Its image is algebraically closed, and $\mathbb{C}$ is algebraic over that image, so $\Phi$ is surjective.

Finally, transport the lattice order of $E$ through $\Phi$. Because $\Phi$ is a field isomorphism, the transported order is compatible with the original addition and multiplication on $\mathcal{C}$. Because $\Phi$ fixes the diagonal copy of $K$, the prescribed identities for the actual numbers $t$ and $i$ survive the transport. Fixing $K$ is therefore essential: an unspecified abstract isomorphism would not by itself ensure those identities for the chosen complex numbers. The proof combines valuation rings, finite extensions of bases in infinite-dimensional rational vector spaces, nonprincipal ultrafilters, transcendence bases, and extension of field embeddings.

The scope of the resulting order is reflected in its prescribed identities. Since $t>1$ in the ordinary real order but $1\wedge t=0$ in our lattice, the order does not extend the usual order on all of $\mathbb{R}$. It does extend the usual rational order. Also, $t$ is positive whereas neither $t^{-1}$ nor $-t^{-1}$ is nonnegative. On the countable field this follows from the fact that every nonzero element of $P_B$ has nonnegative integral degree, while $\delta(t^{-1})=-1$, and it remains true after transport. Thus closure under inversion is absent, distinguishing the construction from the inversion-closed directed cones discussed above. The argument is algebraic and uses choice through maximality arguments and the ultrafilter construction.

The paper develops these ingredients in the order required by the proof. We first establish the finite positive basis-extension principle and the valuation and residue calculations. We then construct the countable positive basis and describe its coordinate lattice order, including the initial layers and the degree restrictions on positive elements. The ultrapower construction and the field isomorphism fixing $K$ provide the passage to $\mathbb{C}$. The final results give the main theorem and explicit formulas for comparisons, joins, and meets in terms of representatives and rational basis coordinates.

\section{Preliminaries}

We collect the terminology and algebraic tools used in the construction. All rings are commutative with identity, all subrings contain the ambient identity, and all field embeddings preserve it. We write $\mathbb{N}_0=\{0,1,2,\ldots\}$ and $\mathfrak{c}=2^{\aleph_0}$.

\subsection{Positive cones and compatible lattice orders}

A \emph{positive cone} in a field $L$ is a subset $P$ such that $0\in P$, $P+P\subseteq P$, $PP\subseteq P$, and $P\cap(-P)=\{0\}$. It defines a partial order by $x\leq y$ if and only if $y-x\in P$. This order is translation invariant, and products of nonnegative elements are nonnegative. We call it a \emph{compatible lattice order} if every pair has a least upper bound $x\vee y$ and a greatest lower bound $x\wedge y$. A directed order requires only common upper and lower bounds. Lattice orders are directed; they need not be total, and positivity of inverses is not assumed. These conventions concern lattice-ordered rings whose underlying rings are fields~\cite{Fuchs1963}.

\subsection{Rational bases and finite positive extensions}

A $\mathbb{Q}$-basis $B$ of a vector space $V$ is a Hamel basis: every $x\in V$ has a unique expansion $x=\sum_{b\in B}x_b b$, with $x_b\in\mathbb{Q}$ and finite support. Its coordinate cone is $P_B=\{\sum_{b\in B}q_b b:q_b\in\mathbb{Q}_{\geq0},\ \text{finite support}\}$. Then $x\leq_B y$ is equivalent to $x_b\leq y_b$ for every $b\in B$. The lattice operations are $x\vee_B y=\sum_b\max\{x_b,y_b\}b$ and $x\wedge_B y=\sum_b\min\{x_b,y_b\}b$. Both sums have support contained in the union of the supports of $x$ and $y$.

If $1\in B$, the order on $\mathbb{Q}$ is the usual one. When $V$ is a field, the condition $bc\in P_B$ for all $b,c\in B$ guarantees $P_BP_B\subseteq P_B$ by distributivity. The finite extension lemma proved in our construction asserts the following. If $B$ is a basis of $V$, $A$ is a nonempty finite subset of a larger rational vector space, $W=V+\operatorname{span}_{\mathbb{Q}}A$, and a rational linear map $\lambda:W\to\mathbb{Q}$ satisfies $\lambda(V)=0$ and $\lambda(a)=1$ for $a\in A$, then a finite set $B'$ can be chosen so that $B\cup B'$ is a basis of $W$, $\lambda(b')=1$ for $b'\in B'$, $A\subseteq P_{B\cup B'}$, and $P_{B\cup B'}\cap V=P_B$. Thus finitely many positivity requirements can be imposed without altering the old coordinates.

\subsection{Local rings, valuation rings, and residue maps}

A \emph{local ring} $(R,\mathfrak m_R)$ has a unique maximal ideal; equivalently, its nonunits form the ideal $\mathfrak m_R$. A module $M$ is finitely generated if $M=Ru_1+\cdots+Ru_r$ for finitely many elements $u_j$. Nakayama's lemma states that $\mathfrak m_RM=M$ then implies $M=0$. A local subring $(S,\mathfrak n)$ of a field dominates $(R,\mathfrak m_R)$ if $R\subseteq S$ and $\mathfrak n\cap R=\mathfrak m_R$. Every local subring of a field is dominated by a valuation ring in that field. A valuation ring $\mathcal O$ of $L$ is characterized by the property that $x\in\mathcal O$ or $x^{-1}\in\mathcal O$ for every $x\in L^\times$.

A rational-valued valuation $\nu:L^\times\to\mathbb{Q}$ is additive on products and satisfies $\nu(x+y)\geq\min\{\nu(x),\nu(y)\}$ when $x+y\neq0$; set $\nu(0)=+\infty$. For the present application, fix a real transcendental $t>1$, put $F=\mathbb{Q}(t)$, and let $K\subseteq\mathbb{C}$ consist of the elements algebraic over $F$. The field $K$ is countable and algebraically closed. The valuation at infinity on $F$, given by $\nu_0(p(t)/q(t))=\deg q-\deg p$ for nonzero $p,q\in\mathbb{Q}[X]$, extends to a $\mathbb{Q}$-valued valuation $\nu$ on $K$~\cite{EP2005}. Set $\delta=-\nu$ and $\delta(0)=-\infty$. Then $\delta(xy)=\delta(x)+\delta(y)$ for nonzero $x,y$, and $\delta(x+y)\leq\max\{\delta(x),\delta(y)\}$. Moreover, $\delta(t)=1$ and $\delta(q)=0$ for $q\in\mathbb{Q}^\times$.

The associated valuation ring and maximal ideal are $\mathcal O=\{x:\delta(x)\leq0\}$ and $\mathfrak m=\{x:\delta(x)<0\}$. Write $k=\mathcal O/\mathfrak m$ and $\operatorname{res}:\mathcal O\to k$ for the residue map. Its restriction to $\mathbb{Q}$ is injective. For $n\in\mathbb{N}_0$, put $D_n=\{x\in K:\delta(x)\leq n\}$ and $\ell_n(x)=\operatorname{res}(x/t^n)$. This is a well-defined $\mathbb{Q}$-linear map on $D_n$, with $\ell_n(x)=0$ exactly when $\delta(x)<n$, and $\ell_n(t^n)=1$. It also satisfies $\ell_{r+s}(xy)=\ell_r(x)\ell_s(y)$ for $x\in D_r$, $y\in D_s$. These identities supply the normalized functionals for the finite basis extensions.

\subsection{Nonprincipal ultrafilters and ultrapowers}

An \emph{ultrafilter} $\mathcal U$ on $\mathbb{N}$ is a proper filter such that, for every $A\subseteq\mathbb{N}$, exactly one of $A$ and $\mathbb{N}\setminus A$ belongs to $\mathcal U$. Here a proper filter contains $\mathbb{N}$, excludes $\varnothing$, and is closed under finite intersections and passage to supersets. It is nonprincipal if it contains no finite set. Such ultrafilters exist by extending the cofinite filter using Zorn's lemma. Define $(x_n)\sim(y_n)$ when $\{n:x_n=y_n\}\in\mathcal U$, and write $E=K^{\mathbb{N}}/\mathcal U$ for the resulting ultrapower.

Addition and multiplication descend coordinatewise, making $E$ a field. If $K$ is algebraically closed, so is $E$: roots of coordinate polynomials give a root in the quotient. A compatible lattice order on $K$ induces one on $E$ by $[(x_n)]\leq_E[(y_n)]$ if $\{n:x_n\leq_Ky_n\}\in\mathcal U$. Joins and meets are computed coordinatewise. The diagonal map $j(a)=[(a,a,\ldots)]$ is a field embedding preserving and reflecting order and preserving the lattice operations. These are instances of the ultraproduct preservation principle~\cite{Marker2002}. For countably infinite $K$ and nonprincipal $\mathcal U$ on $\mathbb{N}$, one has $|E|=\mathfrak c$.

\subsection{Transcendence bases and extension of field embeddings}

A set $T\subseteq L$ is algebraically independent over a subfield $k_0$ if no nonzero polynomial over $k_0$ in finitely many distinct elements of $T$ vanishes. A maximal such set is a \emph{transcendence basis}; then $L/k_0(T)$ is algebraic, and $|T|=\operatorname{trdeg}_{k_0}L$. If $k_0$ is countable and $L$ is uncountable, then $|L|=\operatorname{trdeg}_{k_0}L$. Indeed, rational functions involve finitely many variables, and algebraic extensions add at most finitely many roots for each polynomial.

We also use the embedding-extension theorem: if $L/D$ is algebraic, $\Omega$ is algebraically closed, and $\sigma:D\hookrightarrow\Omega$ is a field embedding, then $\sigma$ extends to an embedding $L\hookrightarrow\Omega$. It follows that algebraically closed extensions of a fixed field with equally sized transcendence bases are isomorphic over that field. Here, matching transcendence bases over $K$ and extending the induced rational-function-field isomorphism gives $\Phi:E\to\mathbb{C}$ with $\Phi(j(a))=a$ for $a\in K$. Surjectivity follows because the image is algebraically closed and $\mathbb{C}$ is algebraic over it. Transporting the order through $\Phi$ preserves both field compatibility and the prescribed lattice identities on $K$.

\section{Conventions and finite positive basis extensions}

All rings are commutative and have an identity. The symbols $\leq$ and $\geq$ on rational coefficients denote the usual order of $\mathbb{Q}$.

\begin{lemma}{\rm Let $P\subseteq R$ satisfy

(1) $0\in P$;

(2) $P+P\subseteq P$;

(3) $P\cap(-P)=\{0\}$;

(4) $PP\subseteq P$.

Then $\preceq$ is a partial order on $P$, where $\preceq$ is given by $x\preceq y$ if and only if $y-x\in P$.}
\end{lemma}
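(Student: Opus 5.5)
The plan is to verify the three partial-order axioms directly from conditions (1)--(3). The statement says ``on $P$'', but the intended and natural reading is that $\preceq$ is a partial order on the whole ring $R$. Since $P\subseteq R$, the restriction to $P$ then follows immediately. I will therefore prove the statement for $R$ and note this at the end. Throughout, only the additive group structure of $R$ is used.

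\emph{Reflexivity.} For any $x\in R$ we have $x-x=0\in P$ by (1), so $x\preceq x$.

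\emph{Antisymmetry.} Suppose $x\preceq y$ and $y\preceq x$. Then $y-x\in P$ and $x-y=-(y-x)\in P$, so $y-x\in P\cap(-P)$. By (3) this gives $y-x=0$, i.e.\ $x=y$.

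\emph{Transitivity.} If $x\preceq y$ and $y\preceq z$, then
\[
z-x=(z-y)+(y-x)\in P+P\subseteq P
\]
by (2), so $x\preceq z$.

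Condition (4) is not needed for the order axioms. I will record that it gives the multiplicative compatibility used later: if $0\preceq x$ and $0\preceq y$, then $xy\in PP\subseteq P$, so $0\preceq xy$. Likewise, the identity $(y+a)-(x+a)=y-x$ shows that $x\preceq y$ implies $x+a\preceq y+a$, which gives translation invariance.

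There is no real obstacle here. The only point needing care is the scope of ``partial order on $P$'', which I handle by proving the stronger statement on $R$.
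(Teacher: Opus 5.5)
Your proof is correct and is exactly the routine verification the paper intends; the paper's own proof just says the axioms are checked directly. You are also right to read ``partial order on $P$'' as a partial order on $R$, from which the restriction to $P$ follows, and to note that condition (4) is not needed for the order axioms.
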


\begin{proof} It is straightforward by checking all requirements.
\end{proof}

Given a $\mathbb{Q}$-linearly independent set $B$, write
$$P_B=\operatorname{Cone}_{\mathbb{Q}}(B)
=\{\sum_{b\in B}q_b b\mid q_b\in\mathbb{Q}_{\geq0},\ \operatorname{supp}(q)\text{ finite}\},
$$ where $\operatorname{supp}(q)=\{q_{b}\in\mathbb{Q}\mid q_{b}\neq 0\}$ and $\mathbb{Q}_{\geq0}$ is the set of all nonnegative rational numbers.

\begin{lemma} {\rm (\cite{RTY1}) Let $V$ be a subspace of a vector space $W$. If $B$ is a basis of $V$ and the set $\{b_{1}+V,\cdot\cdot\cdot, b_{n}+V\}$ is a basis of $W/V$, then $B\cup\{b_{1},\cdot\cdot\cdot, b_{n}\}$ is a basis of $W$.}

\end{lemma}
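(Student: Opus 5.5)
We argue directly from the definitions: first linear independence of $B\cup\{b_1,\dots,b_n\}$, then spanning of $W$. Write $\pi:W\to W/V$ for the quotient map, which is linear with kernel $V$.

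\emph{Independence.} Suppose a finite linear relation
$$\sum_{c\in B_0} q_c c+\sum_{j=1}^n \alpha_j b_j=0$$
holds, with $B_0\subseteq B$ finite. Applying $\pi$ kills every $c\in B\subseteq V$ and leaves $\sum_j \alpha_j (b_j+V)=0$ in $W/V$. Since $\{b_1+V,\dots,b_n+V\}$ is a basis of $W/V$, all $\alpha_j=0$. The relation then reduces to $\sum_{c\in B_0} q_c c=0$, and independence of $B$ in $V$ forces every $q_c=0$.

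The argument also shows that the $b_j$ are pairwise distinct, since their cosets are, and that no $b_j$ lies in $V$, since $b_j+V\neq 0$. Hence the union is disjoint, and independence is meant in the set sense, with no accidental coincidences.

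\emph{Spanning.} Take $w\in W$. Expand $w+V=\sum_j \alpha_j(b_j+V)$ using the basis of $W/V$. Then
$$w-\sum_j \alpha_j b_j\in\ker\pi=V,$$
so this difference is a finite combination of elements of $B$. Consequently $w$ lies in the span of $B\cup\{b_1,\dots,b_n\}$.

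There is no real obstacle here. The only point requiring care is that $B$ may be infinite (a Hamel basis), so every relation and expansion must be taken with finite support. This holds automatically because linear combinations are finite by definition. No dimension counting is used, so the argument is valid for infinite-dimensional $V$, which is exactly the setting in which the lemma will be applied.
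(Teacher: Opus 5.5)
Your proof is correct and complete. The paper gives no proof of its own here; it cites the lemma as a standard fact, and your argument is the standard one. For independence you apply the quotient map to kill the $B$-part, and for spanning you lift the coset expansion of $w+V$ and absorb the difference into $\operatorname{span}B$.

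You also rightly read the hypothesis as saying the $n$ cosets $b_j+V$ are pairwise distinct. That reading is necessary: if $b_1\neq b_2$ but $b_1-b_2\in V$, the conclusion fails. It is also exactly what the invertibility of the matrix $T$ supplies when the lemma is applied in the finite positive basis-extension lemma.
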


\begin{lemma}
{\rm Let $B$ be a basis of a $\mathbb{Q}$-vector space $V$ and $F$ a nonempty finite subset of a $\mathbb{Q}$-vector space $E$ containing $V$.  Put
$$
W=V+\operatorname{span}_{\mathbb{Q}} F.
$$
If $\lambda:W\to\mathbb{Q}$ is a $\mathbb{Q}$-linear map satisfies $\lambda(V)=\{0\},\lambda(F)=\{1\}$. Then there is a finite set $B'=\{b_1,\ldots,b_n\}$ such that

(1) $B\cup B'$ is a basis of $W$;

(2) $\lambda(b_j)=1$ for every $1\leq j\leq n$;

(3) $F\subseteq P_{B\cup B'}$ and $P_{B\cup B'}\cap V=P_B$.}
\end{lemma}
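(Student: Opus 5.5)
The plan is to work modulo $V$, choose new basis vectors that are ``spread out'' enough that every target becomes a positive combination of them modulo $V$, and then absorb the remaining $V$-components by a single correction from the old cone $P_B$. Write $F=\{f_1,\ldots,f_m\}$. Since $\lambda(f)=1\neq0$ and $\lambda(V)=0$, no $f\in F$ lies in $V$. So the images $f+V$ span a nonzero space $W/V$. After renumbering, I will pick $f_1,\ldots,f_n$ whose images form a basis of $W/V$.

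For every $j$, write $f_j\equiv\sum_{i=1}^n c_{ji}f_i \pmod V$ with $c_{ji}\in\mathbb{Q}$. Applying $\lambda$ gives $\sum_i c_{ji}=1$, so each target is an \emph{affine} combination of $f_1,\ldots,f_n$ modulo $V$. Put $s=\frac1n\sum_i f_i$ and, for a rational $M>0$ to be fixed, define
$$g_i=s+M(f_i-s).$$
The transition matrix from $(f_i)$ to $(g_i)$ is $MI+\frac{1-M}{n}J$, whose eigenvalues are $M$ and $1$, so the $g_i+V$ again form a basis of $W/V$. Also $\lambda(g_i)=1$ for each $i$. Solving back gives
$$f_j\equiv\sum_i d_{ji}g_i \pmod V,\qquad d_{ji}=\frac1n+\frac{c_{ji}-\frac1n}{M},\qquad \sum_i d_{ji}=1.$$
Since there are only finitely many $j$, taking $M$ large makes every $d_{ji}>0$.

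Next, write $f_j=\sum_i d_{ji}g_i+u_j$ with $u_j\in V$. The $u_j$ may have negative $B$-coordinates. Let $S\subseteq B$ be the finite union of the supports of the $u_j$, and choose $u=\sum_{b\in S}Nb\in P_B$ with the rational $N$ large enough that $u+u_j\in P_B$ for every $j$. Set $b_i=g_i-u$. Because $\sum_i d_{ji}=1$, we get
$$f_j=\sum_i d_{ji}b_i+(u_j+u),$$
and the right-hand side lies in $P_{B\cup B'}$. This proves $F\subseteq P_{B\cup B'}$. Moreover $\lambda(b_i)=\lambda(g_i)=1$, which is (2), and $b_i+V=g_i+V$, so Lemma 3.2 gives (1).

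For the cone identity in (3), $P_B\subseteq P_{B\cup B'}\cap V$ is clear. Conversely, suppose $x\in V$ has nonnegative coordinates in $B\cup B'$. Its expansion in $B$ alone is also an expansion in $B\cup B'$. By uniqueness of coordinates, all its $B'$-coordinates vanish, so $x\in P_B$.

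The only delicate point is the positivity of the coefficients $d_{ji}$. It is handled by the outward scaling about the centroid $s$, which uses the affine normalization $\sum_i c_{ji}=1$ coming from $\lambda$. The same normalization is what allows one common vector $u$ to correct all the $V$-parts simultaneously.
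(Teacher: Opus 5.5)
Your proof is correct and is essentially the paper's construction: the paper's $b_j=Ge_j-M\sum_{r}e_r-h$ with $G=1+nM$ is exactly your $g_j-u$ with scaling factor $G$ about the centroid $\frac1n\sum_r e_r$, and its coefficients $\alpha_{aj}=(k_{aj}+M)/G$ coincide with your $d_{ji}$. The only difference is cosmetic and lies in (3): the paper applies $\lambda$ to see that the $B'$-coefficients of an element of $V\cap P_{B\cup B'}$ are nonnegative and sum to $0$, whereas you use uniqueness of coordinates in the basis $B\cup B'$, and both arguments are valid.
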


\begin{proof} As $W=V+\operatorname{span}_{\mathbb{Q}} F$, we can choose $e_{1}+V,\ldots,e_{n}+V$ forms a basis of the quotient space $W/V$ for some $\{e_{1},\ldots,e_{n}\}\subseteq F$. Since $\lambda(V)=\{0\}$ and $\lambda(F)=\{1\}$, there exists $f\in F\setminus V$. Then we have $f+V\neq 0_{W\setminus V}$ and hence $1\leq diam(W/V)=n$.
For every $a\in F$, $a+V$ can be written uniquely as
$$a+V=\sum_{j=1}^{n}k_{aj}(e_j+V),k_{aj}\in\mathbb{Q}.$$ It yields that $a-\sum\limits_{j=1}\limits^{n}k_{aj}e_j\in V$. Define $v_{a}=a-\sum\limits_{j=1}\limits^{n}k_{aj}e_j$. Now, $a$ can be uniquely expressed as $a=v_{a}+\sum\limits_{j=1}\limits^{n}k_{aj}e_j$. As a consequence, $$\lambda(\upsilon_{a})+\sum\limits_{j=1}\limits^{n}k_{aj}\lambda(e_j)=\sum\limits_{j=1}\limits^{n}k_{aj}=\lambda(a)=1.$$
Fix $M\in\mathbb{N}$ such that for every $1\leq j\leq n$ and for every $a\in F$, $0\leq k_{aj}+M$. Since $B$ is a basis of $V$, there exists a finite subset $S_{a}\subseteq B$ such that $v_a=\sum\limits_{b\in B}v_{a,b}b$, where each $v_{a,b}$ is a rational number and $v_{a,b}=0$ if $b\in B\setminus S_{a}$. For each $b\in B$, set
$$h_b=\max\bigl(\{0\}\cup\{-v_{a,b}\mid a\in F\}\bigr)\text{ and }h=\sum_{b\in B}h_b b.$$ One can check that for each $a\in F$,
$$h\in P_B\text{ and } v_a+h\in P_B.$$
For each $j$, take
$$
G=1+nM,b_j=Ge_j-M\sum_{r=1}^{n}e_r-h.
$$

Set $B'=\{b_{1},\cdot\cdot\cdot,b_{n}\}$.

(1) $B\cup B'$ is a basis of $W$.

By Lemma 3.2, it suffices to prove that $b_{1}+V,\cdot\cdot\cdot,b_{n}+V$ forms a basis of $W/V$.

Since $h\in V$, we have $$b_{j}+V=Ge_j-M\sum_{r=1}^{n}e_r-h+V=-\sum\limits_{r=1}\limits^{j-1}M(e_{r}+V)+(G-M)(e_{j}+V)-\sum\limits_{r=j+1}\limits^{n}M(e_{r}+V).$$
Consequently,
\[
\begin{pmatrix}
b_1 + V \\
b_2 + V \\
\vdots \\
b_n + V
\end{pmatrix}
=
T
\begin{pmatrix}
e_1 + V \\
e_2 + V \\
\vdots \\
e_n + V
\end{pmatrix},
\]
where the matrix $T$ is given by
\[
T = G I_n - M J_n =
\begin{pmatrix}
G-M    & -M     & \cdots & -M \\
-M     & G-M    & \cdots & -M \\
\vdots & \vdots & \ddots & \vdots \\
-M     & -M     & \cdots & G-M
\end{pmatrix}.
\]

Suppose $(GI_n-MJ_n)u=0$, $u\in\mathbb{Q}^{n}$. Then for every $1\leq j\leq n$,
$$Gu_j=M\sum_{k=1}^{n}u_k.$$
This yields $(G-nM)\sum\limits_{k=1}\limits^{n}u_k=0$. We obtain $\sum\limits_{k=1}\limits^{n}u_k=0$.
It follows form $G\ne0$ that each $u_{j}=0$. This means that $T$ is a invertible matrix. Hence $b_{1}+V,\cdot\cdot\cdot,b_{n}+V$ is a basis of the quotient space $W/V$.

(2) $\lambda(b_{j})=1$ for each $1\leq j\leq n$.

Indeed, $$\lambda(b_j)=\lambda(Ge_j-M\sum_{r=1}^{n}e_r-h)=G\lambda(e_j)-M\sum_{r=1}^{n}\lambda(e_r)-\lambda(h)=G-nM=1.$$

(3) $F\subseteq P_{B\cup B'}$ and $P_{B\cup B'}\cap V=P_B$.

Choose any $a\in F$, put $\alpha_{aj}=\frac{k_{aj}+M}{G}$.
Then we have $\alpha_{aj}\geq0$ and $\sum\limits_{j=1}\limits^{n}\alpha_{aj}=\frac{\sum\limits_{j=1}\limits^{n}k_{aj}+nM}{G}=1$
Furthermore,
$$
\begin{aligned}
\sum_{j=1}^{n}\alpha_{aj}b_j
&=\sum_{j=1}^{n}\alpha_{aj}(Ge_j-M\sum_{r=1}^{n}e_r-h)\\
&=\sum_{j=1}^{n}(k_{aj}+M)e_j
-M\sum_{k=1}^{n}e_k-h\\
&=\sum_{j=1}^{n}k_{aj}e_j-h.
\end{aligned}
$$
Now, we get $a=(v_a+h)+\sum\limits_{j=1}\limits^{n}\alpha_{aj}b_j\in P_{B\cup B'}$.

Clearly, $P_{B}\subseteq P_{B\cup B'}\cap V$. Take any $v\in P_{B\cup B'}\cap V$. Then $v$ can be uniquely express as $\sum\limits_{b\in B}q_{v,b}b+\sum\limits_{j=1}\limits^{n}q_{v,j}b_{j}$, where $\operatorname{supp}(q)=\{q_{v,b}\mid q_{v,b}\neq 0\}$ is finite and all $q_{v,b},q_{v,j}$ are positive rational numbers. Applying the mapping $\lambda$, we have
$$0=\lambda(v)=\sum\limits_{b\in B}q_{v,b}\lambda(b)+\sum\limits_{j=1}\limits^{n}q_{v,j}\lambda(b_{j})=\sum\limits_{j=1}\limits^{n}q_{v,j}.$$
As a result, $q_{v,j}=0$ for each $1\leq j\leq n$. Equivalently, $v\in P_{B}$. Thus, $P_{B\cup B'}\cap V=P_{B}$.\end{proof}

\section{A countable algebraically closed field and a rational degree function}

Fix a real number $t>1$, $t$ is transcendental over $\mathbb{Q}$.
Set a filed $F=\mathbb{Q}(t)$ and
$$K=\{z\in\mathbb{C}:z\text{ is algebraic over }F\}.$$

\begin{lemma} {\rm Let $F$ be an extension of $E$ and $\alpha\in F$. Then $\alpha$ is algebraic over $F$ if and only if $[E(\alpha):E]<\infty$.}
\end{lemma}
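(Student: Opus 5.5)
The statement as printed contains an evident typographical slip: every element of $F$ is trivially algebraic over $F$. The intended assertion is that $\alpha$ is algebraic over $E$ if and only if $[E(\alpha):E]<\infty$, and I would prove it in that form. The plan is the standard comparison between the evaluation homomorphism $\varepsilon:E[X]\to F$, $\varepsilon(p)=p(\alpha)$, and the powers $1,\alpha,\alpha^2,\ldots$ viewed as vectors in the $E$-vector space $E(\alpha)$.

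For the direction ``finite degree implies algebraic'', suppose $[E(\alpha):E]=d<\infty$. The $d+1$ elements $1,\alpha,\ldots,\alpha^d$ of $E(\alpha)$ are then $E$-linearly dependent. This gives coefficients $c_0,\ldots,c_d\in E$, not all zero, with $\sum_{k}c_k\alpha^k=0$. Hence $p(X)=\sum_k c_kX^k$ is a nonzero polynomial over $E$ with $p(\alpha)=0$, so $\alpha$ is algebraic over $E$.

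For the converse, suppose $\alpha$ is algebraic over $E$. Then $\ker\varepsilon$ is a nonzero ideal of the principal ideal domain $E[X]$, so $\ker\varepsilon=(m)$ for a monic $m$ of some degree $d\ge1$. Since $E[X]/(m)\cong E[\alpha]\subseteq F$ is a subring of a field, it is an integral domain. Therefore $m$ is irreducible, $(m)$ is maximal, and $E[\alpha]$ is a field. A field containing $E$ and $\alpha$ contains $E(\alpha)$, so $E[\alpha]=E(\alpha)$. Division with remainder by $m$ shows that every $p(\alpha)$ equals $r(\alpha)$ with $\deg r<d$. Thus $1,\alpha,\ldots,\alpha^{d-1}$ span $E(\alpha)$ over $E$, and $[E(\alpha):E]\le d<\infty$. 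In fact they form a basis, since a dependence among them would give a nonzero polynomial of degree $<d$ in $(m)$, but finiteness is all the statement requires.

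The only step needing real care is the identification $E[\alpha]=E(\alpha)$, that is, showing that inverses of elements of $E[\alpha]$ are again polynomial expressions in $\alpha$. I would handle it as above, via the maximality of the ideal generated by an irreducible polynomial. A direct alternative avoids that argument: for $0\ne y\in E[\alpha]$, multiplication by $y$ is an injective $E$-linear map of the finite-dimensional space $E[\alpha]$, hence surjective. So $1=yz$ for some $z\in E[\alpha]$, giving $y^{-1}\in E[\alpha]$.
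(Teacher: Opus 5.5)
Your proof is correct, and your correction of the statement is right: the intended claim is that $\alpha$ is algebraic over $E$ if and only if $[E(\alpha):E]<\infty$. This is how the lemma is used in Lemma 4.3 (``$z$ is algebraic over $L$, so $[L(z):L]<\infty$''). The paper gives no proof of this lemma and simply quotes it as a standard fact. Your argument is the standard textbook proof and supplies what the paper omits: the evaluation map $E[X]\to F$ with kernel $(m)$, the identification $E[\alpha]=E(\alpha)$, and linear dependence of $1,\alpha,\ldots,\alpha^d$.
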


\begin{lemma} {\rm  Let $E$ be an extension of $F$ and $K$ an extension of $E$. If $[E:F]$ and $[K:E]$ are finite, then $[K:F]$ is finite and further $[K:F]=[E:F][K:E]$.}
\end{lemma}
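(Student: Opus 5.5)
The plan is the standard tower-law argument for finite extensions $F\subseteq E\subseteq K$. Set $m=[E:F]$ and $n=[K:E]$. Fix an $F$-basis $\{u_1,\ldots,u_m\}$ of $E$ and an $E$-basis $\{w_1,\ldots,w_n\}$ of $K$. I will show that the $mn$ products $u_iw_j$ form an $F$-basis of $K$. This gives finiteness of $[K:F]$ and the formula $[K:F]=mn=[E:F][K:E]$ at the same time.

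\emph{Spanning.} Take $x\in K$ and write
\[
x=\sum_{j=1}^{n}c_jw_j,\qquad c_j\in E .
\]
Expand each coefficient in the first basis as
\[
c_j=\sum_{i=1}^{m}a_{ij}u_i,\qquad a_{ij}\in F .
\]
Substituting and using distributivity in the field $K$ gives
\[
x=\sum_{i,j}a_{ij}\,u_iw_j ,
\]
so the products span $K$ over $F$.

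\emph{Linear independence.} Suppose
\[
\sum_{i,j}a_{ij}\,u_iw_j=0,\qquad a_{ij}\in F .
\]
Group the terms as
\[
\sum_{j}\Bigl(\sum_{i}a_{ij}u_i\Bigr)w_j=0 .
\]
Each inner sum lies in $E$, so independence of the $w_j$ over $E$ forces $\sum_i a_{ij}u_i=0$ for every $j$. Independence of the $u_i$ over $F$ then forces every $a_{ij}=0$.

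It remains to check that the $mn$ products are pairwise distinct, so that the count is exactly $mn$. If $u_iw_j=u_{i'}w_{j'}$ with $(i,j)\neq(i',j')$, the nontrivial relation $u_iw_j-u_{i'}w_{j'}=0$ would contradict the independence just proved. Hence the products form an $F$-basis of cardinality $mn$, which proves the statement.

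There is no real obstacle here. The only point requiring care is the regrouping step in the independence argument, where one must note that the inner sums $\sum_i a_{ij}u_i$ lie in $E$, so that $E$-independence of the $w_j$ applies.
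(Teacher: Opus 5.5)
Your proof is correct and is the standard product-of-bases argument for the tower law; the distinctness check on the $mn$ products is a careful touch. The paper gives no proof of this lemma and treats it as a standard fact, so your argument supplies exactly what it assumes.
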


\begin{lemma}
{\rm $K$ is a countable algebraically closed subfield of $\mathbb{C}$ and contains $i=\sqrt{-1}$.}
\end{lemma}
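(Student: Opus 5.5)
The plan is to verify, in order, that $K$ is a subfield of $\mathbb{C}$, that it contains $i$, that it is algebraically closed, and that it is countable. Throughout I use the two preceding lemmas: finiteness of degree characterizes algebraic elements, and degrees multiply in towers.

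\emph{Subfield.} For $x,y\in K$, the field $F(x)$ is finite over $F$ by the first lemma. Since $y$ is algebraic over $F$, it is algebraic over $F(x)$, so $F(x,y)$ is finite over $F(x)$. The tower lemma then gives $[F(x,y):F]<\infty$. Every element $z$ of $F(x,y)$ satisfies $[F(z):F]\le[F(x,y):F]$, hence is algebraic over $F$ by the first lemma. This puts $x\pm y$, $xy$ and $x^{-1}$ (for $x\neq 0$) in $K$. Clearly $F\subseteq K$, and $i\in K$ because it is a root of $X^2+1\in F[X]$.

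\emph{Algebraic closedness.} Let $p(X)=X^m+c_{m-1}X^{m-1}+\cdots+c_0$ with $c_j\in K$ and $m\ge 1$. Since $\mathbb{C}$ is algebraically closed, $p$ has a root $z\in\mathbb{C}$. The field $L=F(c_0,\ldots,c_{m-1})$ is finite over $F$, by iterating the tower argument above. Moreover $[L(z):L]\le m$. The tower lemma gives $[L(z):F]<\infty$, so $[F(z):F]<\infty$ and $z\in K$ by the first lemma.

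\emph{Countability.} $F=\mathbb{Q}(t)$ is countable, since it is a countable union over degrees of images of finite tuples of rationals. Hence $F[X]$ is countable. Each $z\in K$ is a root of some nonzero polynomial in $F[X]$, and each such polynomial has finitely many roots in $\mathbb{C}$. So $K$ is a countable union of finite sets, hence countable. It is infinite because it contains $\mathbb{Q}$.

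The only slightly delicate step is algebraic closedness: one must pass to the finitely generated field $L$ of coefficients rather than work over $F$ directly. The paper's first lemma appears to contain a typo ("algebraic over $F$" should read "over $E$"); I use it in the corrected form.
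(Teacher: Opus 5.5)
Your proof is correct and follows the paper's argument essentially step for step: the tower law gives closure under the field operations, passing to the finite coefficient field $L=F(c_0,\ldots,c_{m-1})$ gives algebraic closedness, and counting the finitely many roots of each of the countably many polynomials in $F[X]$ gives countability. Your reading of Lemma 4.1 as ``algebraic over $E$'' is the intended one, and you also justify, through $[F(z):F]\le[F(x,y):F]$, the step ``every element of $F(a,b)$ is algebraic over $F$'' that the paper leaves implicit.
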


\begin{proof} Clearly, $i^{2}+1=0$ and hence $i\in K$. Let $a,b\in K$. Equivalently, $a,b$ are algebraic over $F$. By Lemma 4.2, we have $$[F(a,b):F]=[F(a,b):F(a)][F(a):F]$$ is finite. So each element of the filed $F(a,b)$ is algebraic over $F$. Hence $a\pm b$, $ab$ and $a^{-1}$ for $a\ne0$ contained in $F(a,b)$ belong to $K$. Thus $K$ is a subfield of $\mathbb{C}$.

The field $$F=\mathbb{Q}(t)=\{\frac{f(t)}{g(t)}\mid f,g\in\mathbb{Q}[x], g\neq0\}$$ is countable.
This leads to $F[x]$ is also countable. In the filed $\mathbb{C}$, every polynomial with degree at least 1 has finitely many roots. Thus $|K|\leq\aleph_0$. $F\subseteq K$ ensures that $|K|=\aleph_0$.

Suppose $f\in K[x]$ is nonconstant. For convenience, we let $$f(x)=\sum\limits_{k=0}\limits^{n}a_{k}x^{k}\in K[x]$$ and set $L=F(a_{0},a_{1},\cdot\cdot\cdot,a_{n})$. By Lemma 4.2, $$[L:F]=[F(a_{0}),F]\prod\limits_{k=1}\limits^{n-1}[F(a_{0},a_{1},\cdot\cdot\cdot,a_{k+1}):F(a_{0},a_{1},\cdot\cdot\cdot,a_{k})]$$ is finite. As $f\in\mathbb{C}[x]$ and $\mathbb{C}$ is algebraically closed, we can fix a $z\in\mathbb{C}$ with $f(z)=0$. It follows form $f\in L[x]$ that $z$ is algebraic over $L$. By Lemma 4.1, $[L(z):L]<\infty$. As a result, $[L(z):F]=[L(z):L][L:F]$ is finite. So $z$ is algebraic over $F$. In other words, $z\in K$. Thus $K$ is algebraically closed.
\end{proof}

\begin{lemma}{\rm (\cite{JRRG13})Let $(R,\mathfrak{m})$ be a local ring and $M$ a finitely generated $R$-module with $\mathfrak{m} M=M$. Then $M=0$.}
\end{lemma}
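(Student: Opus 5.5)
The plan is the standard determinant-trick proof of Nakayama's lemma, arguing by induction on the number of generators. Suppose $M=Ru_1+\cdots+Ru_r$ with $r$ minimal, and assume for contradiction that $M\neq 0$, so that $r\geq 1$. Since $u_r\in M=\mathfrak m M$, we can write $u_r=\sum_{j=1}^{r}m_ju_j$ with each $m_j\in\mathfrak m$. Rearranging gives
$$(1-m_r)u_r=\sum_{j=1}^{r-1}m_ju_j.$$

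The key step is to show that $1-m_r$ is a unit of $R$. In the local ring $(R,\mathfrak m)$ the nonunits form exactly the ideal $\mathfrak m$, as recalled in the preliminaries. If $1-m_r$ were a nonunit, it would lie in $\mathfrak m$, and then $1=(1-m_r)+m_r\in\mathfrak m$, which is a contradiction. Hence $(1-m_r)^{-1}$ exists in $R$, and
$$u_r=\sum_{j=1}^{r-1}(1-m_r)^{-1}m_ju_j\in Ru_1+\cdots+Ru_{r-1}.$$
Consequently $M$ is generated by $u_1,\dots,u_{r-1}$, which contradicts the minimality of $r$.

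For the case $r=1$, the displayed relation reads $(1-m_1)u_1=0$, so $u_1=0$ and $M=0$ directly, again a contradiction. Therefore $M=0$.

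No step here is a real obstacle. The only point needing care is the unit argument for $1-m_r$, which uses the locality hypothesis. The finite generation hypothesis is essential because it is what allows us to choose a minimal generating set.
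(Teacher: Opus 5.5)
Your proof is correct and complete. The one step you pass over quickly is writing $u_r=\sum_j m_ju_j$ with every $m_j\in\mathfrak{m}$. This is legitimate because $\mathfrak{m}$ is an ideal, so $\mathfrak{m}M=\mathfrak{m}u_1+\cdots+\mathfrak{m}u_r$. Your separate treatment of $r=1$ is harmless but unnecessary, since the general step already shows that $M$ is generated by the empty set.

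The paper gives no proof of this lemma; it states it with a citation only. It uses it once, in the proof of Lemma~4.5, where it is misreferenced as ``Lemma~4.1'', to obtain $\mathfrak{m}T\neq T$ for the nonzero finitely generated $\mathcal{O}$-module $T=\mathcal{O}[y]$. So there is no route in the paper to compare against. Your argument supplies the standard proof the paper omits, and it covers exactly that application.

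One correction to your framing: what you wrote is the minimal-generating-set argument (as in Atiyah--Macdonald), not the determinant trick. The determinant trick writes all generators at once as $u_i=\sum_j a_{ij}u_j$ with $a_{ij}\in\mathfrak{m}$. It then deduces $\det(I_r-A)\,M=0$, and $\det(I_r-A)\equiv 1\pmod{\mathfrak{m}}$ is therefore a unit. That route gives a more general statement. For any ideal $\mathfrak{a}$ of any commutative ring, $\mathfrak{a}M=M$ forces $aM=0$ for some $a\equiv 1\pmod{\mathfrak{a}}$, and locality enters only at the last step. Your route is shorter and uses locality directly, which is all the paper needs.
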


\begin{lemma}
{\rm Let $(R,\mathfrak{m}_R)$ be a local subring of a field $L$. There exists a local subring $(\mathcal{O},\mathfrak{m})$ of $L$ such that

(1) $R\subseteq\mathcal{O}$ and $\mathfrak{m}\cap R=\mathfrak{m}_R$;

(2) $x\in L\setminus\{0\}$ implies $x\in\mathcal{O}$ or $x^{-1}\in\mathcal{O}$.}
\end{lemma}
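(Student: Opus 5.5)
The statement to prove is the existence of a valuation ring dominating a given local subring $R$ of $L$. The plan is the standard Zorn's lemma argument, with Lemma 4.4 (Nakayama) doing the key work.

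\textbf{Setting up the maximal element.} Consider the set $\Sigma$ of all local subrings $(S,\mathfrak n)$ of $L$ that dominate $(R,\mathfrak m_R)$. Order $\Sigma$ by domination: $(S,\mathfrak n)\le(S',\mathfrak n')$ when $S\subseteq S'$ and $\mathfrak n'\cap S=\mathfrak n$.

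First, $\Sigma$ is nonempty because it contains $R$ itself. For a chain $(S_\alpha,\mathfrak n_\alpha)$, take the union $S=\bigcup S_\alpha$ and $\mathfrak n=\bigcup\mathfrak n_\alpha$. One checks that $S$ is a ring and $\mathfrak n$ is an ideal of it. An element of $S\setminus\mathfrak n$ lies in some $S_\alpha\setminus\mathfrak n_\alpha$, so it is a unit there and hence a unit in $S$. Therefore $S$ is local with maximal ideal $\mathfrak n$. Finally, $\mathfrak n\cap R=\mathfrak m_R$ because each $\mathfrak n_\alpha\cap R=\mathfrak m_R$. So every chain has an upper bound, and Zorn's lemma gives a maximal element $(\mathcal O,\mathfrak m)$. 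It satisfies condition (1) by construction, since domination is transitive.

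\textbf{Proving the valuation property (2).} Let $x\in L^\times$ and suppose, for contradiction, that $x\notin\mathcal O$ and $x^{-1}\notin\mathcal O$.

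First consider the ring $\mathcal O[x]$. If $\mathfrak m\mathcal O[x]\neq\mathcal O[x]$, choose a maximal ideal $\mathfrak p$ of $\mathcal O[x]$ containing $\mathfrak m\mathcal O[x]$. Then $\mathfrak p\cap\mathcal O$ is a proper ideal containing $\mathfrak m$, so it equals $\mathfrak m$. The localization $\mathcal O[x]_{\mathfrak p}$ is then a local subring of $L$ dominating $\mathcal O$, and it is strictly larger because $x\notin\mathcal O$. This contradicts maximality. Hence $\mathfrak m\mathcal O[x]=\mathcal O[x]$, and by the same argument $\mathfrak m\mathcal O[x^{-1}]=\mathcal O[x^{-1}]$.

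From these two equalities we can write
\[
1=\sum_{i=0}^{n}u_ix^i,\qquad 1=\sum_{j=0}^{m}w_jx^{-j},\qquad u_i,w_j\in\mathfrak m,
\]
and we choose $n$ and $m$ minimal. Without loss of generality $n\ge m$.

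Since $1-w_0$ is a unit of $\mathcal O$, the second equation gives $1=\sum_{j=1}^m w_j'x^{-j}$ with $w_j'\in\mathfrak m$. Multiplying by $x^n$ yields $x^n=\sum_{j=1}^{m} w_j'x^{n-j}$. Substituting this into the first equation lowers its degree while keeping all coefficients in $\mathfrak m$, which contradicts the minimality of $n$.

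\textbf{Where Nakayama enters.} The paper places Lemma 4.4 just before this statement, so it can be used in the step above. Set $M=\mathcal O[x^{-1}]$ and suppose $x$ is integral over $\mathcal O$; then $M$ is a finitely generated $\mathcal O$-module. If $\mathfrak m M=M$, Lemma 4.4 gives $M=0$, which is absurd. This yields the needed nontriviality and supports the contradiction above.

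\textbf{Main obstacle.} The hard step is this elimination argument: one must check that the $\mathcal O$-coefficients remain in $\mathfrak m$ at every substitution and that the degrees genuinely decrease. The Zorn part is routine.
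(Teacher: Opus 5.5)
Your proof is correct. The Zorn's-lemma half is the same as the paper's. One routine check is left implicit there: for the union $(S,\mathfrak n)$ of a chain to be an upper bound in the domination order, you also need $\mathfrak n\cap S_\alpha=\mathfrak n_\alpha$ for every $\alpha$. This follows from the chain condition, just as in the paper.

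For property (2), both arguments use the same skeleton. The key fact is that $\mathfrak m\mathcal O[x]$ and $\mathfrak m\mathcal O[x^{-1}]$ cannot both be the unit ideal. This is combined with localization at a maximal ideal lying over $\mathfrak m$ and maximality of $(\mathcal O,\mathfrak m)$. The difference is how that key fact is proved.

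The paper takes $1=a_0+a_1x+\cdots+a_nx^n$ with $a_i\in\mathfrak m$ and deduces that $y=x^{-1}$ is integral over $\mathcal O$. So $\mathcal O[y]$ is a finitely generated $\mathcal O$-module, and Nakayama (Lemma 4.4) gives $\mathfrak m\mathcal O[y]\neq\mathcal O[y]$. Localizing $\mathcal O[y]$ at a maximal ideal over $\mathfrak m$ then yields $y\in\mathcal O$ directly by maximality.

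You instead use the minimal-degree elimination, which is Lemma 5.20 in Atiyah--Macdonald, and it is sound:
\begin{itemize}
\item $m,n\geq 1$, since otherwise $1\in\mathfrak m$;
\item the exponents satisfy $n-j\geq n-m\geq 0$, so they are legitimate;
\item every coefficient after substitution lies in $\mathfrak m$.
\end{itemize}

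Your route is entirely elementary and needs neither integrality nor Nakayama. The paper's route is more conceptual, essentially lying-over for an integral extension, and avoids the bookkeeping with minimal degrees.

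Consequently, you should delete your paragraph on where Nakayama enters. Nothing in your argument uses it, and as written it is wrong: integrality of $x$ would make $\mathcal O[x]$, not $\mathcal O[x^{-1}]$, finitely generated. Moreover, you never establish any integrality in the first place.
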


\begin{proof}
Set $$\mathscr{A}=\{(S,\mathfrak{m}_S)\mid(S,\mathfrak{m}_S)\text{ is a local subring of } L\text{ and }R\subseteq S,\mathfrak{m}_S\cap R=\mathfrak{m}_R\}$$
and the order $\mathscr{A}$ is defined by:
$$(S_1,\mathfrak{m}_{S_{1}})\preceq (S_2,\mathfrak{m}_{S_{2}})\Longleftrightarrow S_1\subseteq S_2\text{ and }\mathfrak{m}_{S_2}\cap S_1=\mathfrak{m}_{S_1}.$$
Let $\{((S_i,\mathfrak{m}_{S_{i}}))\mid i\in I\}$ be a nonempty chain of $\mathscr{A}$. Take
$$
S=\bigcup_j S_j,\text{ and } \mathfrak n=\bigcup_j\mathfrak{m}_{S_j}.
$$
Then $S$ is a subring of $L$ and $\mathfrak n$ is an ideal of $S$. Furthermore, $1\not\in\mathfrak n$. In other words, $\mathfrak n$ is a proper ideal. Choose any $s\in S\setminus\mathfrak n$. We can fix $i$ with $s\in S_i$. So $s\notin\mathfrak m_i$ and hence $s$ is a unit of $S_i$. As a consequence, $s^{-1}\in S_i\subseteq S$.
Thus every element of $S\setminus\mathfrak n$ is a unit of $S$.
This proves that $\mathfrak n$ contains no units.
Hence, $(S,\mathfrak n)$ is a local subring.

We claim that for each $i$, $\mathfrak n\cap S_i=\mathfrak m_{S_{i}}$. Suppose $a\in\mathfrak n\cap S_i$. Select $j$ such that $a\in\mathfrak m_{S_{j}}$. If $(S_j,\mathfrak m_{S_{j}})\preceq(S_i,\mathfrak m_{S_{i}})$, then
$a\in\mathfrak m_{S_{j}}\subseteq\mathfrak m_{S_{i}}$.
On the contrary,
$$
a\in\mathfrak m_{S_{j}}\cap S_i=\mathfrak m_{S_{i}}.
$$
All in all, $a\in\mathfrak m_{S_{i}}$. Consequently, $\mathfrak n\cap S_i=\mathfrak m_{S_{i}}$ holds. Choose a $k\in I$,
$$
\mathfrak n\cap R
=
(\mathfrak n\cap S_{k})\cap R
=m_{S_{k}}\cap R
=
\mathfrak m_R.
$$
Therefore, $(S,\mathfrak n)\in\mathscr A$. Using Zorn's lemma, $\mathscr A$ has a maximal element $(\mathcal O,\mathfrak m)$.
Then, we have
$$
R\subseteq\mathcal O\subseteq L
\text{ and }
\mathfrak m\cap R=\mathfrak m_R.
$$

Choose any $x\in (L\setminus\{0\})\setminus\mathcal O$.
We first prove that $$\mathfrak m\mathcal O[x]=\{\sum\limits_{k=0}\limits^{n}a_{k}x^{k}\mid 0\leq n,\text{ each }a_{k}\in\mathfrak m\}=\mathcal O[x].$$
One can check that $\mathfrak m\mathcal O[x]$ is an ideal of $\mathcal O[x]$. Select a maximal ideal $\mathfrak q$ of
$\mathcal O[x]$ such that $\mathfrak m\mathcal O[x]\subseteq\mathfrak q$.
This means that the ideal $\mathfrak q\cap\mathcal O$ is proper and
contains $\mathfrak m$. The maximality of $\mathfrak m$ gives
$\mathfrak q\cap\mathcal O=\mathfrak m$.

Set
$$
B
=
\left\{
fg^{-1}\in L\mid
f,g\in\mathcal O[x],\ g\notin\mathfrak q
\right\}.
$$
Since $\mathcal O[x]$ is a subring of the field $L$ and $\mathfrak q$ is a maximal ideal, $B$ is
a subring of $L$. Furthermore, $\mathfrak qB=\{fg^{-1}\in L\mid f\in \mathfrak q,g\in O[x]\setminus\mathfrak q\}$ is a ideal of $B$ and contains all non unit of $B$. So $(B,\mathfrak qB)$ is a local ring. Clearly, $\mathfrak q\subseteq(\mathfrak qB)\cap O[x]$. If $a\in (\mathfrak qB)\cap O[x]$, then $a=fg^{-1}$ for some $f\in\mathfrak q$ and $g\not\in\mathfrak q$. Then $ga=f\in\mathfrak q$. As $\mathfrak q$ is a maximal ideal and $g\not\in\mathfrak q$, $a\in\mathfrak q$. Thus $(\mathfrak qB)\cap O[x]=\mathfrak q$. As a consequence,
$$
(\mathfrak qB)\cap\mathcal O
=
((\mathfrak qB)\cap O[x])\cap\mathcal O
=
\mathfrak q\cap\mathcal O
=\mathfrak m.
$$

 Moreover,
$$
(\mathfrak qB)\cap R
=((\mathfrak qB)\cap\mathcal O)\cap R
=\mathfrak m\cap R
=
\mathfrak m_R,
$$
Consequently, $(B,\mathfrak qB)\in\mathscr A$ and $(\mathcal O,\mathfrak m)\preceq(B,\mathfrak qB)$. Note that $x\in B$ but $x\notin\mathcal O$. This contradicts with the fact that $(\mathcal O,\mathfrak m)$ is maximal in $\mathscr A$.
As a result, $\mathfrak m\mathcal O[x]=\mathcal O[x]$ is valid.

So we obtain $1\in\mathfrak m\mathcal O[x]$. In other words,
$$\exists\ a_0,\ldots,a_n\in\mathfrak m \text{ such that }a_0+a_1x+\cdots+a_nx^n=1.$$
It follows form $1\not\in\mathfrak m$ that $1\leq n$.

Let $y=x^{-1}$. Then we have $$(1-a_0)y^n=a_1y^{n-1}+\cdots+a_n.$$ As $a_0\in\mathfrak m$ and $(\mathcal O,\mathfrak m)$ is local subring,
$1-a_0$ is a unit of $\mathcal O$.
Set $c_i=(1-a_0)^{-1}a_i\in\mathcal O$ ($1\leq i\leq n$). So we get
$$y^n=c_1y^{n-1}+\cdots+c_n.$$

Let $T=\mathcal O[y]$. By induction, each $y^{m}$ ($n\leq m$) is an
$\mathcal O$-linear combination of $y,y^{2}\cdots,y^{n-1}$. So we conclude that
$T$ is a finitely generated $\mathcal O$-module.
Clearly, $T\neq0$ because $1\in T\subseteq L$.

By Lemma 4.1, $\mathfrak mT\neq T$. Fix a maximal ideal $\mathfrak q'$ of $T$ such that
$\mathfrak mT\subseteq\mathfrak q'$. It follows form $1\in T$ that $\mathfrak q'\cap\mathcal O$ is a ideal of $\mathcal O$ and contains $\mathfrak m$. The maximality of $\mathfrak m$ gives
$$
\mathfrak q'\cap\mathcal O=\mathfrak m.
$$
Consider the ring $$B'
=
\left\{
fg^{-1}\in L\mid
f,g\in\mathcal O[y],\ g\notin\mathfrak q'
\right\}.
$$

Using the method for checking the property of the local subring $(B,\mathfrak qB)$ above, we have $(B',\mathfrak q'B')$ belongs to $\mathscr{A}$ and further $(\mathcal O,\mathfrak m)\preceq(B,\mathfrak q')$. By the maximality of $(\mathcal O,\mathfrak m)$,
$B'=\mathcal O$. So we conclude that
$$
x^{-1}=y\in\mathcal O
$$ as $y\in O[y]\subseteq B'$.
Thus the requirement (2) is proved. \end{proof}

\begin{lemma} {\rm Every totally ordered abelian group is torsion-free.}

\end{lemma}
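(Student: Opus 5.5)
The argument is by contradiction, using the compatibility of the total order with addition. Let $(\Gamma,+,\leq)$ be a totally ordered abelian group, so that $x\leq y$ implies $x+z\leq y+z$. Suppose some $g\in\Gamma$ with $g\neq 0$ satisfies $ng=0$ for a positive integer $n$. We show this is impossible.

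Since the order is total and $g\neq0$, exactly one of $0<g$ or $g<0$ holds. In the second case we replace $g$ by $-g$, which is still nonzero, still has $n(-g)=0$, and is strictly positive because adding $-g$ to both sides of $g<0$ gives $0<-g$. So we may assume $0<g$.

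Next we prove by induction on $m\geq1$ that $0<mg$. The case $m=1$ is the assumption. For the inductive step, suppose $0<mg$. Adding $g$ to both sides gives $g<(m+1)g$, and combining this with $0<g$ by transitivity gives $0<(m+1)g$. Strictness is preserved because translation is a bijection: if $x\neq y$ then $x+z\neq y+z$.

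Taking $m=n$ gives $0<ng=0$, which contradicts irreflexivity of $<$. Hence $\Gamma$ has no nonzero element of finite order, that is, $\Gamma$ is torsion-free.

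There is no real obstacle in this argument. The only point needing care is that translation preserves strict inequalities, and this follows from injectivity of $x\mapsto x+z$. In the paper the lemma will be applied to the value group $\delta(K^\times)$, so that the value-group extension can be normalized inside $\mathbb{Q}$.
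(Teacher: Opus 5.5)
Your argument is correct and is the standard proof; the paper states this lemma without any proof and treats it as a known fact. One small correction to your closing remark: the paper applies the lemma to $\Gamma=K^\times/\mathcal{O}^\times$, before $\delta$ is defined, where torsion-freeness makes $\xi(\alpha)=m/n$ (for $n\alpha=m\gamma$) well defined and injective.
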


\begin{proposition}
{\rm There is a mapping $\delta:K\to\mathbb{Q}\cup\{-\infty\}$ satisfies the following statements:

(1) $\delta(0)=-\infty,\delta(t)=1$ and for each $q\in\mathbb{Q}\setminus\{0\}$, $\delta(q)=0$;

(2) For each $x,y\in K\setminus\{0\}$, $\delta(xy)=\delta(x)+\delta(y)$;

(3) For each $x,y\in K$, $\delta(x+y)\leq\max\{\delta(x),\delta(y)\}$;

(4) $p,q\in\mathbb{Q}[x]$, $\delta(\frac{p(t)}{q(t)})=\deg p-\deg q$.}
\end{proposition}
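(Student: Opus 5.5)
We construct $\delta$ from a valuation ring of $K$ dominating the local ring at infinity of $F$, using Lemma 4.5. Put $s=t^{-1}$ and let $R=\mathbb{Q}[s]_{(s)}$ be the localization of $\mathbb{Q}[s]$ at the prime $(s)$. This is a local subring of $F\subseteq K$ with maximal ideal $\mathfrak m_R=sR$. Lemma 4.5 gives a local subring $(\mathcal O,\mathfrak m)$ of $K$ with $R\subseteq\mathcal O$, $\mathfrak m\cap R=sR$, and $x\in\mathcal O$ or $x^{-1}\in\mathcal O$ for every $x\in K^\times$. We let $\Gamma=K^\times/\mathcal O^\times$, ordered by $x\mathcal O^\times\geq y\mathcal O^\times$ iff $xy^{-1}\in\mathcal O$. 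The valuation-ring property makes this a total order, and $\Gamma$ is an ordered abelian group. The map $\nu:K^\times\to\Gamma$ satisfies $\nu(xy)=\nu(x)+\nu(y)$. It also satisfies $\nu(x+y)\geq\min\{\nu(x),\nu(y)\}$: if, say, $x^{-1}y\in\mathcal O$, then $x+y=x(1+x^{-1}y)$ with $1+x^{-1}y\in\mathcal O$.

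The main step is to embed $\Gamma$ into $\mathbb{Q}$ with $\nu(s)\mapsto 1$. First we show $\Gamma$ is torsion over $\langle\nu(s)\rangle$. Let $x\in K^\times$ with minimal polynomial $x^d+c_{d-1}x^{d-1}+\cdots+c_0$ over $F$. On $F^\times$ the valuation restricts to the $s$-adic valuation. Indeed, every $f\in F^\times$ is $s^m u$ with $u$ a unit of $R$, hence a unit of $\mathcal O$ (its inverse lies in $R\subseteq\mathcal O$). So $\nu(f)=m\,\nu(s)$ with $m=\deg q-\deg p$ for $f=p(t)/q(t)$, and $\nu(s)>0$ since $s\in\mathfrak m$. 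In the equation $x^d=-\sum c_jx^j$, the ultrametric inequality forces two terms among $x^d, c_jx^j$ to have equal minimal value. Hence $d\,\nu(x)=\nu(c_j)+j\,\nu(x)$ for some $j<d$, so $(d-j)\nu(x)\in\mathbb{Z}\nu(s)$. By Lemma 4.6 (torsion-freeness of ordered groups), the rational number $m/(d-j)$ with $(d-j)\nu(x)=m\nu(s)$ is well defined. It depends only on $\nu(x)$, because $N\nu(x)=M\nu(s)$ and $N'\nu(x)=M'\nu(s)$ give $(MN'-M'N)\nu(s)=0$, hence $MN'=M'N$. We define $\nu'(x)$ to be this number. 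It is additive, which follows by clearing denominators, and order-preserving, using $\nu(s)>0$ and torsion-freeness again. Thus $\nu'$ is a $\mathbb{Q}$-valued valuation.

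Finally we set $\delta=-\nu'$ and $\delta(0)=-\infty$. Property (2) is additivity of $\nu'$, and (3) is the ultrametric inequality, including the trivial cases with $0$. For (4), $\nu'(p(t)/q(t))=\deg q-\deg p$ by the computation on $F$, so $\delta=\deg p-\deg q$. Property (1) follows from (4): $\delta(t)=1$ since $t=s^{-1}$, and $\delta(q)=0$ for nonzero rationals, which are units of $R$. The obstacle I expect is the restriction to $F$, namely checking that every unit of $R$ stays a unit in $\mathcal O$; this is exactly what domination ($\mathfrak m\cap R=\mathfrak m_R$) guarantees, which is why we use Lemma 4.5 rather than an arbitrary valuation ring. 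The torsion and rationality step is then the only genuinely new computation.
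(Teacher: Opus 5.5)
Your route is the paper's: your $R=\mathbb{Q}[s]_{(s)}$ is exactly the paper's ring $\{0\}\cup\{f\in F^\times:\nu_0(f)\geq 0\}$. From there you follow the same steps: dominate it by a valuation ring via Lemma 4.5, pass to $\Gamma=K^\times/\mathcal O^\times$, show $\Gamma$ is torsion over $\langle\nu(s)\rangle$ (the paper's $\gamma=\widetilde\nu(t^{-1})$), and normalize into $\mathbb{Q}$ using Lemma 4.6.

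One step, however, is false as written. From $x^d+c_{d-1}x^{d-1}+\cdots+c_0=0$, the ultrametric inequality only says that the minimum value of the nonzero terms is attained at two \emph{distinct} indices. The term $x^d$ need not be one of them, so ``$d\,\nu(x)=\nu(c_j)+j\,\nu(x)$ for some $j<d$'' can fail. For example, $X^2-X-s$ is irreducible over $F$, and its roots (sum $1$, product $-s$) have values $0$ and $\nu(s)$. For the root $y$ with $\nu(y)=\nu(s)$ one has $\nu(y^2)=2\nu(s)$, while the minimum $\nu(s)$ is realized by $-y$ and $-s$. The repair is what the paper does: with $c_d=1$, take $j\neq k$ realizing the minimum. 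Then $(j-k)\nu(x)=\nu(c_k)-\nu(c_j)\in\mathbb{Z}\nu(s)$, and everything after that goes through unchanged.

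Your closing remark also misplaces the role of domination. Units of $R$ remain units of $\mathcal O$ simply because $R\subseteq\mathcal O$. What $\mathfrak m\cap R=\mathfrak m_R$ actually secures is $s\in\mathfrak m$, i.e.\ $\nu(s)>0$. Without it, $\mathcal O=K$ would be allowed and the valuation would be trivial on $F$. You do use $s\in\mathfrak m$ correctly in the body, so this affects only the commentary.
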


\begin{proof}
$\forall p,q\in\mathbb{Q}[x]$ with $\frac{p(t)}{q(t)}\neq0$, define
$$
\nu_0(\frac{p(t)}{q(t)})=\deg q-\deg p.
$$

The mapping $\delta$ is well-defined because $t$ is transcendental over $\mathbb{Q}$.
Let $f=\frac{e(t)}{h(t)}\in F\setminus\{0\}$ and $g=\frac{l(t)}{s(t)}\in F\setminus\{0\}$. Take $m=\min\{\nu_0(f),\nu_0(g)\}$. Then
$$
\begin{aligned}
\nu_0(fg)&=\deg(hs)-\deg(el)=\deg(h)-\deg(e)+\deg(s)-\deg(l)=\nu_0(f)+\nu_0(g),\\
\nu_0(f+g)&=\deg(hs)-\deg(es+lh)\geq\deg(h)+\deg(s)-\max\{\deg(es),\deg(lh)\} =\min\{\nu_0(f),\nu_0(g)\}.
\end{aligned}
$$
Set
$$
R=\{0\}\cup\{f\in F\setminus\{0\}\mid\nu_0(f)\geq0\}
\text{ and }
\mathfrak{m}_R=\{0\}\cup\{f\in F\setminus\{0\}\mid\nu_0(f)>0\}.
$$
Then $R$ is a subring of $K$ and $\mathfrak{m}_R$ is a ideal of $R$. Note that a element $f\in R$ is a unit if and only if $\nu_0(f)=0$. Thus $(R,\mathfrak{m}_R)$ is a local ring.

By Lemma 4.5, there is a local subring $(\mathcal{O},\mathfrak{m})$ such that

(1) $R\subseteq\mathcal{O}$ and $\mathfrak{m}\cap R=\mathfrak{m}_R$;

(2) $k\in K\setminus\{0\}$ implies $k\in\mathcal{O}$ or $k^{-1}\in\mathcal{O}$.

Choose any $f\in F\setminus R$. Then $f^{-1}\in\mathfrak{m}_R\subseteq\mathfrak{m}$. As $\mathfrak{m}$ is a ideal and $1=ff^{-1}\not\in\mathfrak{m}$, we obtain $f\notin\mathcal{O}$. This yields that
$$
\mathcal{O}\cap F=R.
$$

Consider the quotient group
$$
\Gamma=(K^\times/\mathcal{O}^\times,\bm{+}),
$$
and for each $x\in K^\times$, let $\widetilde\nu(x)$ be the class of $x\mathcal{O}^\times$, where $K^\times$ and $\mathcal{O}^\times$ denote the set of all units of $K$ and $\mathcal{O}$, respectively.  Define
$$
\widetilde\nu(x)\geq\widetilde\nu(y)
\quad\Longleftrightarrow\quad \frac{x}{y}\in\mathcal{O}.
$$
This is well-defined and $\leq$ forms a partial order. By (2), $\leq$ is a totally order.   Moreover, this order has the following properties.

(i) If $\widetilde\nu(x)\leq\widetilde\nu(y)$, then $\widetilde\nu(x)\bm{+}\widetilde\nu(z)\leq\widetilde\nu(y)\bm{+}\widetilde\nu(z)$ for any $z\in K^\times$.

By assumption, $\frac{x}{y}\in\mathcal{O}$. By the definition of the operation $\bm{+}$, $\widetilde\nu(x)\bm{+}\widetilde\nu(z)=\widetilde\nu(xz)$ and $\widetilde\nu(y)\bm{+}\widetilde\nu(z)=\widetilde\nu(yz)$. It follows form $\frac{x}{y}=\frac{xz}{yz}\in\mathcal{O}$ that $\widetilde\nu(x)\bm{+}\widetilde\nu(z)\leq\widetilde\nu(y)\bm{+}\widetilde\nu(z)$.

(ii) $$
\begin{aligned}
\widetilde\nu(x)=\widetilde\nu(1)\text{ if and only if } z\in\mathcal{O}^{+}\\
\widetilde\nu(x)\geq \widetilde\nu(1)\text{ if and only if } z\in\mathcal{O}\\
\widetilde\nu(x)>\widetilde\nu(1)\text{ if and only if } z\in\mathfrak{m}.\\
\end{aligned}
$$
It suffices to check the third equivalence relation. The condition $\widetilde\nu(x)>\widetilde\nu(1)$ means $z\not\in\mathcal{O}^{+}$ and $z\in\mathcal{O}$. In other words, $z\in\mathfrak{m}$. The converse is similar.

(iii) $\widetilde\nu(x+y)\geq\min\{\widetilde\nu(x),\widetilde\nu(y)\}$.

Without loss of generality, suppose $\widetilde\nu(x)\geq\widetilde\nu(y)$. Equivalently, $x/y\in\mathcal{O}$. Then we have $\frac{x+y}{y}=1+\frac{x}{y}\in\mathcal{O}$. This inequality is proved.

Set $$\gamma=\widetilde\nu(t^{-1}).$$

As $\frac{1}{t}\in\mathfrak{m}_R\subseteq\mathfrak{m}$, we know $\gamma>\widetilde\nu_0(1)$. Let $b=\frac{b_{1}(t)}{b_{2}(t)}\in F\setminus\{0\}$ and $k=\nu_0(b)$. Then
$$c=\frac{b}{(t^{-1})^k}=\frac{b_{1}(t)t^{\nu_0(b)}}{b_{2}(t)}$$
is a unit of $R$ and thus of $\mathcal{O}$. By (ii), $\widetilde\nu(c)=\widetilde\nu(1)$. As a result,
$$
\widetilde\nu(b)=\widetilde\nu(c(t^{-1})^k)=\widetilde\nu(c)\bm{+}\widetilde\nu((t^{-1})^k)=\widetilde\nu((t^{-1})^k)=k\gamma.
$$

Fix $x\in K^\times$, we can fix a nonzero polynomial such that
$$
\sum_{j=0}^{p}a_jx^j=0,\text{ each } a_j\in F,
$$
with at least two nonzero terms. For each $a_{j}\neq0$,
$$
\widetilde\nu(a_jx^{j})=\nu_{0}(a_j)\gamma\bm{+}j\widetilde\nu(x).$$

Set $M=\{\widetilde\nu(a_jx^{j})\mid a_{j}\neq 0, 0\leq j\leq n\}$. $M$ contains a least element as $M$ is finite. Assume that $\widetilde\nu(a_rx^{r})$ is the only least element. In other words, $\widetilde\nu(a_jx^{j})>\widetilde\nu(a_rx^{r})$ for each $a_{j}\neq 0$. Now, $\widetilde\nu(\frac{a_jx^{j}}{a_rx^{r}})>\widetilde\nu(1)$ and by (ii) $\frac{a_jx^{j}}{a_rx^{r}}\in\mathfrak{m}$. In addition, $1+\sum\limits_{j\neq r,a_{j}\neq0}\frac{a_jx^{j}}{a_rx^{r}}=0$. Then we deduce that $$1=-\sum\limits_{j\neq r,a_{j}\neq0}\frac{a_jx^{j}}{a_rx^{r}}\in\mathfrak{m}.$$ It is a contradiction. As a consequence, there are different $\widetilde\nu(a_ux^{u})$ and $\widetilde\nu(a_vx^{v})$ being the least element of $M$. So we obtain $\widetilde\nu(a_ux^{u})=\widetilde\nu(a_vx^{v})$ and thus
$$(u-v)\widetilde\nu(x)=(\nu_{0}(a_v)-\nu_{0}(a_u))\gamma.$$

Thus every $\alpha\in\Gamma$, we have
$$n\alpha=m\gamma\text{ for some }n\in\mathbb{N},\ m\in\mathbb{Z}.$$
Define a mapping $\xi:\Gamma\longrightarrow\mathbb{Q}$ by $\xi(\alpha)=m/n$ if $n\alpha=m\gamma$. By Lemma 4.6, $\xi$ is well-defined. One can check that $\xi$ is an order-preserving group embedding with $\xi(\gamma)=1$.

Set
$$
\nu(x)=\xi(\widetilde\nu(x))\quad(x\ne0),
\nu(0)=+\infty\text{ and }\delta=-\nu.
$$
Then $\nu|_{F\setminus\{0\}}=\nu_0$ and further
$$\mathcal{O}=\{x\in K\mid \delta(x)\leq0\},\ \mathfrak{m}=\{x\in K\mid \delta(x)<0\}.$$
The mapping $\delta$ we desired is obtained. \end{proof}

\begin{lemma}
{\rm Let $G=\mathcal{O}/\mathfrak{m}$ and $\pi:\mathcal{O}\longrightarrow G$.
For $n\in\mathbb{N}_0$, define
$K_n=\{x\in K\mid\delta(x)\leq n\}$ and
$$\ell_n:K_n\longrightarrow G,\ \ell_n(x)=\pi(\frac{x}{t^n}).
$$
Then $K_n$ is a $\mathbb{Q}$-vector space and $\ell_n$ is $\mathbb{Q}$-linear. In addition, the following conditions hold:

(1) $\ell_n(x)=0$ if and only if $\delta(x)<n(x\in K_n)$;

(2) $\ell_n(t^n)=1$;

(3) $\ell_{r+s}(xy)=\ell_r(x)\ell_s(y)(x\in K_r,\ y\in K_s)$.}
\end{lemma}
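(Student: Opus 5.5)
The plan is to reduce every assertion to the valuation-ring description of $\mathcal{O}$ and $\mathfrak{m}$ from Proposition 4.7. Recall that
\[
\mathcal{O}=\{x\in K\mid \delta(x)\leq 0\},\qquad \mathfrak{m}=\{x\in K\mid \delta(x)<0\},
\]
and that $\delta$ satisfies items (1)--(3) of that proposition.

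\textbf{Step 1: $K_n$ is a subspace and $\ell_n$ is well defined.} We have $0\in K_n$ since $\delta(0)=-\infty$. For $x,y\in K_n$, item (3) gives $\delta(x+y)\leq\max\{\delta(x),\delta(y)\}\leq n$. For $q\in\mathbb{Q}^\times$, items (1) and (2) give $\delta(qx)=\delta(q)+\delta(x)=\delta(x)$, and $qx=0$ when $q=0$. Hence $K_n$ is a $\mathbb{Q}$-subspace of $K$.

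For $x\in K_n\setminus\{0\}$, items (1) and (2) give
\[
\delta(x/t^n)=\delta(x)-n\leq 0,
\]
so $x/t^n\in\mathcal{O}$ and $\pi(x/t^n)$ makes sense. Linearity follows because $\pi$ is a ring homomorphism: additivity is immediate. For rational scalars, note that $\mathbb{Q}^\times\subseteq\mathcal{O}^\times$ since $\delta(q)=\delta(q^{-1})=0$. Therefore $G$ is a $\mathbb{Q}$-algebra via $\pi$, and $\pi(qx/t^n)=\pi(q)\pi(x/t^n)$. This is exactly $\mathbb{Q}$-linearity with respect to the canonical copy $\pi(\mathbb{Q})$ of $\mathbb{Q}$ in $G$.

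\textbf{Step 2: the three listed properties.}
\begin{enumerate}[label=(\arabic*)]
\item $\ell_n(x)=0$ holds exactly when $x/t^n\in\mathfrak{m}$ (or $x=0$). This means $\delta(x)-n<0$, that is, $\delta(x)<n$.
\item $\ell_n(t^n)=\pi(1)=1$.
\item For $x\in K_r$ and $y\in K_s$, we have $\delta(xy)\leq r+s$ by additivity, with the cases $x=0$ or $y=0$ being trivial. So $xy\in K_{r+s}$. Then $xy/t^{r+s}=(x/t^r)(y/t^s)$ is a product of two elements of $\mathcal{O}$, and multiplicativity of $\pi$ gives $\ell_{r+s}(xy)=\ell_r(x)\ell_s(y)$.
\end{enumerate}

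\textbf{Main obstacle.} No step is genuinely difficult. The only point needing care is justifying the identification of $\mathcal{O}$ and $\mathfrak{m}$ with the sublevel sets of $\delta$, which Proposition 4.7 asserts. If I wanted to double-check it, I would use property (ii) in that proof together with the fact that $\xi$ is an order-preserving embedding. Beyond that, I would remark that $\pi|_{\mathbb{Q}}$ is injective, since $\mathbb{Q}^\times$ consists of units. This makes "$\ell_n(t^n)=1$" and the $\mathbb{Q}$-linearity meaningful in the later applications of Lemma 3.3.
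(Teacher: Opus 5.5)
Your proposal is correct and follows the paper's proof essentially step for step. Like the paper, you use the description $\mathcal{O}=\{\delta\leq 0\}$, $\mathfrak{m}=\{\delta<0\}$ obtained in the proof of Proposition 4.7, the ultrametric inequality for closure of $K_n$, $\mathbb{Q}^\times\subseteq\mathcal{O}^\times$ (so $\pi|_{\mathbb{Q}}$ is injective) for $\mathbb{Q}$-linearity, and the factorization $xy/t^{r+s}=(x/t^r)(y/t^s)$ for the multiplicativity identity.
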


\begin{proof}

By the proof of Proposition 4.7, $\mathcal{O}=\{x\in K:\delta(x)\leq 0\}$, $\mathfrak{m}=\{x\in K:\delta(x)<0\}$. $\pi:\mathcal{O}\to G$ is the quotient map.

For every $q\in\mathbb{Q}\setminus\{0\}$, $\delta(q)=\delta(q^{-1})=0$. So $q,q^{-1}\in\mathcal{O}$ and thus $q\in\mathcal{O}^{\times}$, $q\notin\mathfrak{m}$. This leads to $\mathbb{Q}\subseteq\mathcal{O}$ and $\mathbb{Q}\cap\mathfrak{m}=\{0\}$. Thus the field homomorphism $\pi|_{\mathbb{Q}}:\mathbb{Q}\to G$ is injective. Next, we identify $\mathbb{Q}$ with $\pi|_{\mathbb{Q}}(\mathbb{Q})$. So $\pi(q)=q$ for every $q\in\mathbb{Q}$.

Fix $n\in\mathbb{N}_0$, $0\in K_n$ because $\delta(0)=-\infty\leq n$. Suppose $x,y\in K_n$. Then $\delta(x+y)\leq\max\{\delta(x),\delta(y)\}\leq n$. So we have $x+y\in K_n$. In addition, if $q\in\mathbb{Q}\setminus\{0\}$ and $x\in K_n\setminus\{0\}$, then $\delta(qx)=\delta(q)+\delta(x)=\delta(x)\leq n$. In conclusion, $K_n$ is a $\mathbb{Q}$-vector space.

For every nonzero $x\in K_n$, it follows form $$\delta(\frac{x}{t^n})=\delta(x)-\delta(t^n)=\delta(x)-n\leq 0$$ that $\frac{x}{t^n}\in\mathcal{O}$. Thus the mapping $\ell_n$ is well-defined.

Let $x,y\in K_n$ and $a,b\in\mathbb{Q}$. Since $\pi$ is a ring homomorphism, we obtain
$$\ell_n(ax+by)=\pi(a(\frac{x}{t^n})+b(\frac{y}{t^n}))=\pi(a)\pi(\frac{x}{t^n})+\pi(b)\pi(\frac{y}{t^n})=a\ell_n(x)+b\ell_n(y).$$
Thus $\ell_n$ is a $\mathbb{Q}$-linear mapping.

For every $x\in K_n\setminus\{0\}$, we have that
$$\ell_n(x)=0\iff \frac{x}{t^n}\in\mathfrak{m}\iff\delta(\frac{x}{t^n})<0\iff\delta(x)<n.$$
In addition, $\ell_n(0)=0$ and $\delta(0)=-\infty<n$. Besides, $t^n\in K_n$ and
$$\ell_n(t^n)=\pi(\frac{t^n}{t^n})=\pi(1)=1.$$

Let $x\in K_r$, and $y\in K_s$. If $x=0$ or $y=0$, then $xy\in K_{r+s}$ and hence $\ell_{r+s}(xy)=\ell_r(x)\ell_s(y)$. Otherwise, $\delta(xy)=\delta(x)+\delta(y)\leq r+s$. This means that $xy\in K_{r+s}$. As $\frac{x}{t^r},\frac{y}{t^s}\in\mathcal{O}$ and $\pi$ is a field homomorphism, we konw
$$\ell_{r+s}(xy)=\pi(\frac{xy}{t^{r+s}})=\pi(\frac{x}{t^r}\frac{y}{t^s})=\pi(\frac{x}{t^r})\pi(\frac{y}{t^s})=\ell_r(x)\ell_s(y).$$
\end{proof}

\section{Construction of a positive multiplicative basis}

\begin{theorem}
{\rm There are finite sets $B_n\subseteq K$ for $n\in\mathbb{N}_0$ such that

(1) $B_0=\{1\},B_1=\{t,t-i\}$;

(2) $B=\bigcup\limits_{n=0}\limits^{\infty}B_n$ is a $\mathbb{Q}$-basis of $K$;

(3) $\delta(b)=n, \ell_n(b)=1 (b\in B_n)$;

(4) $b,c\in B$ implies $bc\in P_B$.}

\end{theorem}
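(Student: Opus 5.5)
The plan is to build the layers $B_n$ by induction on $n$. At each stage $n\geq 2$ we apply Lemma 3.3 with the functional $\lambda=\ell_n$ from Lemma 4.8. Throughout, $V_n=\operatorname{span}_{\mathbb{Q}}(B_0\cup\cdots\cup B_n)$ denotes the span built so far. The inductive invariants are:
\begin{enumerate}[label=(\roman*)]
\item $B_0\cup\cdots\cup B_n$ is a $\mathbb{Q}$-basis of $V_n$;
\item every $b\in B_m$ has $\delta(b)=m$ and $\ell_m(b)=1$;
\item $V_n\subseteq K_n$;
\item $bc\in P_{B_0\cup\cdots\cup B_n}$ whenever $b\in B_r$, $c\in B_s$ and $r+s\leq n$.
\end{enumerate}
Before starting, fix an enumeration $K=\{x_1,x_2,\ldots\}$ (Lemma 4.3) and strictly increasing integers $N_j\geq 2$ with $N_j>\delta(x_j)$.

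\emph{Base cases.} Put $B_0=\{1\}$ and $B_1=\{t,t-i\}$. Since $i^2=-1$, Proposition 4.7(2) gives $2\delta(i)=\delta(-1)=0$, so $\delta(i)=0$. Hence $\delta(i/t)=-1$, which gives $\ell_1(t-i)=1-\pi(i/t)=1$. Since $\ell_1(t-i)\neq 0$, Lemma 4.8(1) gives $\delta(t-i)=1$. The set $\{1,t,t-i\}$ is $\mathbb{Q}$-independent because $1,t,i$ are $\mathbb{Q}$-independent. Products involving $1$ are basis elements, so they lie in the cone. Thus the invariants hold for $n\leq 1$.

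\emph{Inductive step.} Let $n\geq 2$, put $V=V_{n-1}$, and take the finite target set
\[
F_n=\{t^n\}\cup\{bc: b\in B_r,\ c\in B_s,\ r,s\geq 1,\ r+s=n\}\cup\{t^{N_j}-x_j: N_j=n\}.
\]
Here $F_n$ is nonempty because it contains $t^n$, and $F_n\subseteq K_n$. Set $W=V+\operatorname{span}_{\mathbb{Q}}F_n\subseteq K_n$ and $\lambda=\ell_n|_W$.
\begin{itemize}
\item By invariant (iii), every $v\in V$ has $\delta(v)\leq n-1<n$, so $\ell_n(V)=0$ by Lemma 4.8(1).
\item $\ell_n(t^n)=1$ by Lemma 4.8(2).
\item For products, Lemma 4.8(3) gives $\ell_n(bc)=\ell_r(b)\ell_s(c)=1$.
\item For the spanning targets, $\delta(x_j)<n$ gives $\ell_n(x_j)=0$, so $\ell_n(t^{N_j}-x_j)=1$.
\end{itemize}
Hence $\lambda$ maps $W$ into the canonical copy of $\mathbb{Q}$ in $G$, and it is a $\mathbb{Q}$-linear map $W\to\mathbb{Q}$ with $\lambda(V)=0$ and $\lambda(F_n)=1$.

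Lemma 3.3 then yields $B_n=B'$ with the following properties:
\begin{itemize}
\item $B_0\cup\cdots\cup B_n$ is a basis of $V_n:=W$, giving invariant (i).
\item Each $b'\in B_n$ has $\ell_n(b')=1$. Since $b'\in K_n$ and $\ell_n(b')\neq 0$, Lemma 4.8(1) forces $\delta(b')=n$, giving invariant (ii).
\item $F_n\subseteq P_{B_0\cup\cdots\cup B_n}$, giving invariant (iv) at level $n$.
\end{itemize}
Earlier positivity statements persist: the basis only grows, so an element of $P_{B_0\cup\cdots\cup B_{n-1}}$ keeps its nonnegative coordinates. This is consistent with $P_{B\cup B'}\cap V=P_B$.

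\emph{Conclusion.} Let $B=\bigcup_n B_n$.
\begin{itemize}
\item $B$ is linearly independent, since every finite subset lies in some $B_0\cup\cdots\cup B_n$.
\item $B$ spans $K$: both $t^{N_j}$ and $t^{N_j}-x_j$ lie in $V_{N_j}$, so $x_j\in V_{N_j}$.
\item Property (4) holds. If $b\in B_r$, $c\in B_s$ with $r,s\geq 1$, then $bc\in F_{r+s}$ whenever $r+s\geq 2$, and so $bc\in P_B$. If $r=0$ or $s=0$, one factor is $1$ and the product is a basis element.
\end{itemize}

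The main obstacle is the verification that $\lambda$ really is a map $W\to\mathbb{Q}$ rather than into the residue field $G$, which may be larger than $\mathbb{Q}$. This is exactly why every target is normalized to have $\ell_n$-value $1$ and why the old span must sit inside $\{\delta<n\}$. Getting $\delta(b')=n$ exactly, rather than only $\delta(b')\leq n$, also depends on this normalization.
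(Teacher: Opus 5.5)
Your proposal is correct and follows essentially the same route as the paper's proof. It uses the same layered induction with the same target sets ($t^n$, products $bc$ from layers $r,s\ge 1$ with $r+s=n$, and the scheduled spanning targets $t^{N_j}-x_j$), and applies Lemma 3.3 in the same way with $\lambda=\ell_n|_W$ landing in the canonical copy of $\mathbb{Q}$. The only cosmetic difference is that you schedule every $x_j$ at a stage $N_j\ge 2$, whereas the paper takes $x_1=i$ with $N_1=1$ so that $i=t-(t-i)$ is already covered by $B_1$; both choices work.
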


\begin{proof}

In the whole proof, we identify $\mathbb{Q}$ with $\pi|_{\mathbb{Q}}(\mathbb{Q})$ as in Lemma 4.8. By Lemma 4.7, we have $$\delta(i)=\frac{1}{2}\delta(i^2)=\frac{1}{2}\delta(-1)=0.$$ By Lemma 4.3, we can enumerate $K=\{x_1,x_2,\ldots\}$ with $x_1=i$.

We can choose a strictly increasing sequence of positive integers $(N_j)_{j\geq 1}$ such that $N_1=1$, $N_2\geq 3$ and $N_j>\delta(x_j)$ for every $j\geq 1$. Indeed, take $N_1=1$, $N_{j}=j+1$($2\leq j$),$x_1=i$ and $x_{j}=t^{j-1}$ ($2\leq j$). Then we have $$j+1=N_j>\delta(x_j)=j-1$$ for every $j\geq 2$. So this sequence is given.

Put $B_0=\{1\}$, $V_0=\mathbb{Q}$, and $P_0=\mathbb{Q}_{\geq 0}$. Then $B_0$ is a basis of $V_0$ and $V_0\subseteq K_0$. Besides, $\delta(1)=0$ and $\ell_0(1)=1$.

Take
$A_1=\{t,t-i\}$,
$B_1=A_1$,
and
$$V_1=V_0+\operatorname{span}_{\mathbb{Q}}A_1.$$
Then $B_{\leq 1}=\{1,t,t-i\}$ is a $\mathbb{Q}$-linearly independent set. So $t+V_0$ and $t-i+V_0$ are linearly independent in $V_1/V_0$ and further $B_{\leq 1}$ forms a basis of $V_1$.

By Lemma 4.7 and Lemma 4.8, $\delta(t)=1$,$\ell_1(t)=1$ and
$\delta(t-i)\leq\max\{\delta(t),\delta(i)\}=1$. This means that $t-i\in K_1$. It follows form $\delta(i)=0$ that $\ell_1(i)=0$. Therefore
$\ell_1(t-i)=\ell_1(t)-\ell_1(i)=1$.
Again by Lemma~4.8, the facts that $t-i\in K_1$ and $\ell_1(t-i)\neq 0$ imply $\delta(t-i)=1$. Since $D_1$ is a $\mathbb{Q}$-vector space containing $1$, $t$, and $t-i$, we also have $V_1\subseteq K_1$.

Kefine $P_1=P_{B_{\leq 1}}$. Clearly $A_1\subseteq P_1$. We also have $P_1\cap V_0=P_0$. To see this, let $p\in P_1\cap V_0$. Then
$p=a+bt+c(t-i)$
for some $a,b,c\in\mathbb{Q}_{\geq 0}$. Since $p\in V_0=\mathbb{Q}$, we have $\ell_1(p)=0$. Applying $\ell_1$ to the displayed expression gives $0=b+c$. The nonnegativity of $b$ and $c$ forces $b=c=0$, so $p=a\in P_0$. Conversely, every element of $P_0$ belongs to $P_1\cap V_0$. Thus the required properties hold for the first layer.

Now let $n\geq 2$, and suppose that the finite sets $B_0,\ldots,B_{n-1}$ have already been chosen. Assume that $B_{\leq n-1}$ is a basis of
$V_{n-1}=\operatorname{span}_{\mathbb{Q}}B_{\leq n-1}$,
and that $\delta(b)=r$ and $\ell_r(b)=1$ for every $b\in B_r$ with $0\leq r<n$. Write $P_{n-1}=P_{B_{\leq n-1}}$.

Every $v\in V_{n-1}$ satisfies $\delta(v)\leq n-1$. Indeed, this is immediate when $v=0$. Otherwise, express $v$ as a finite rational linear combination of elements of $B_{\leq n-1}$. Each nonzero summand has $\delta$-value at most $n-1$, since multiplication by a nonzero rational number does not change the $\delta$-value. Repeated application of the degree inequality then gives $\delta(v)\leq n-1$. In particular,
$\delta(v)<n$
and hence
$\ell_n(v)=0$
for every $v\in V_{n-1}$.

Define
$$A_n=\{t^n\}\cup\{bc:b\in B_r,\ c\in B_s,\ r,s\geq 1,\ r+s=n\}\cup\{t^n-x_j:N_j=n\},$$
where $r$ and $s$ in the middle set range over positive integers.

The set $A_n$ is nonempty because it contains $t^n$. It is finite: there are only finitely many pairs of positive integers $(r,s)$ with $r+s=n$, and the sets $B_r$ and $B_s$ are finite. Furthermore, $r,s<n$ for every such pair, so these sets have already been constructed. Finally, the strict increase of the sequence $(N_j)$ implies that there is at most one index $j$ satisfying $N_j=n$. Thus the definition of $A_n$ uses only previously chosen data and produces a finite set.

We claim that every $a\in A_n$ satisfies $\delta(a)=n$ and $\ell_n(a)=1$.

For $a=t^n$, these identities follow from $\delta(t)=1$ and Lemma~4.8.

If $a=bc$, where $b\in B_r$, $c\in B_s$, and $r+s=n$, then the induction hypothesis gives
$\delta(bc)=\delta(b)+\delta(c)=r+s=n$.
The multiplicative identity in Lemma~4.8 also gives
$\ell_n(bc)=\ell_r(b)\ell_s(c)=1$.

Finally, suppose that $a=t^n-x_j$ with $N_j=n$. By the choice of $N_j$, we have $\delta(x_j)<n$. Therefore
$$\delta(t^n-x_j)\leq\max\{\delta(t^n),\delta(x_j)\}=n,$$
so $t^n-x_j\in K_n$. Moreover, Lemma~4.8 gives $\ell_n(x_j)=0$, including when $x_j=0$. Consequently,
$$\ell_n(t^n-x_j)=\ell_n(t^n)-\ell_n(x_j)=1.$$
Since $\ell_n(t^n-x_j)\neq 0$, the kernel characterization in Lemma~4.8 shows that $\delta(t^n-x_j)$ is not strictly less than $n$. Combined with the preceding upper bound, this yields $\delta(t^n-x_j)=n$. This proves the claim.

Set
$W_n=V_{n-1}+\operatorname{span}_{\mathbb{Q}}A_n$.
Both $V_{n-1}$ and $A_n$ are contained in $K_n$. Since $K_n$ is a $\mathbb{Q}$-vector space, it follows that $W_n\subseteq K_n$.

Although $\ell_n$ is defined with values in $k$, its restriction to $W_n$ takes values in the canonical copy of $\mathbb{Q}$. Indeed, let $w\in W_n$ and write
$w=v+\sum_{a\in A_n}q_a a$,
where $v\in V_{n-1}$ and $q_a\in\mathbb{Q}$. Using $\ell_n(v)=0$ and $\ell_n(a)=1$ for all $a\in A_n$, we obtain
$\ell_n(w)=\sum_{a\in A_n}q_a\,1_k\in\mathbb{Q}\cdot 1_k$.
Identifying $\mathbb{Q}\cdot 1_k$ with $\mathbb{Q}$, we may therefore regard the restriction of $\ell_n$ as a map
$\lambda_n=\ell_n|_{W_n}:W_n\to\mathbb{Q}$.
This map is well defined because it is the restriction of the already defined map $\ell_n$. The displayed expression for $w$ need not be unique; every such expression gives the same value because its coefficient sum equals $\ell_n(w)$.

The map $\lambda_n$ is $\mathbb{Q}$-linear and satisfies
$\lambda_n(V_{n-1})=\{0\}$
and
$\lambda_n(a)=1$
for every $a\in A_n$.

We now apply the cone extension lemma, Lemma~3.2, with the old space $V_{n-1}$, its basis $B_{\leq n-1}$, the finite nonempty set $A_n$, the enlarged space $W_n$, and the linear map $\lambda_n$. Its hypotheses hold because
$W_n=V_{n-1}+\operatorname{span}_{\mathbb{Q}}A_n$,
$\lambda_n(V_{n-1})=\{0\}$, and $\lambda_n(A_n)=\{1\}$.

The lemma yields a finite set $B_n\subseteq W_n$ such that, on setting
$V_n=W_n$,
$B_{\leq n}=B_{\leq n-1}\cup B_n$,
and
$P_n=P_{B_{\leq n}}$,
the set $B_{\leq n}$ is a basis of $V_n$, the inclusions and equalities
$A_n\subseteq P_n$
and
$P_n\cap V_{n-1}=P_{n-1}$
hold, and $\lambda_n(b)=1$ for every $b\in B_n$.

Since $B_n\subseteq V_n=W_n\subseteq K_n$, each $b\in B_n$ satisfies $\delta(b)\leq n$. On the other hand, $\ell_n(b)=\lambda_n(b)=1\neq 0$, so Lemma~4.8 implies that $\delta(b)$ is not strictly less than $n$. Hence $\delta(b)=n$ for every $b\in B_n$. We have also obtained $V_n\subseteq K_n$ and $V_{n-1}\subseteq V_n$. This completes the induction.

We have thus constructed finite sets $B_n$ for all $n\in\mathbb{N}_0$, with $B_0=\{1\}$ and $B_1=\{t,t-i\}$. For every $n$, the set $B_{\leq n}$ is a basis of $V_n$, and every $b\in B_n$ satisfies $\delta(b)=n$ and $\ell_n(b)=1$. Moreover, $A_n\subseteq P_n$ for every $n\geq 1$.

Set $B=\bigcup_{n\geq 0}B_n$. We first prove that $B$ is linearly independent over $\mathbb{Q}$. Consider a finite relation
$\sum_{j=1}^{m}q_jb_j=0$,
where the $b_j$ are distinct elements of $B$ and $q_j\in\mathbb{Q}$. Since only finitely many elements occur, there exists $N\in\mathbb{N}_0$ such that $b_1,\ldots,b_m\in B_{\leq N}$. The set $B_{\leq N}$ is linearly independent, so $q_1=\cdots=q_m=0$. Thus $B$ is linearly independent.

We next prove that $B$ spans $K$. Fix $j\geq 1$ and set $n=N_j$. By construction, both $t^n$ and $t^n-x_j$ belong to $A_n$. For $j=1$, this is the identity $A_1=\{t,t-i\}$; for all other $j$, it follows from the definition of $A_n$. Therefore both elements belong to $V_n$, and
$x_j=t^n-(t^n-x_j)\in V_n\subseteq\operatorname{span}_{\mathbb{Q}}B$.
Since the enumeration $(x_j)$ contains every element of $K$, we obtain
$K=\operatorname{span}_{\mathbb{Q}}B$.
Together with linear independence, this proves that $B$ is a $\mathbb{Q}$-basis of $K$.

Finally, we verify the required positivity of products. For each $n$, the inclusion $B_{\leq n}\subseteq B$ gives $P_n\subseteq P_B$. Let $b,c\in B$, and choose $r,s\in\mathbb{N}_0$ such that $b\in B_r$ and $c\in B_s$.

If $r=0$, then $b=1$ because $B_0=\{1\}$, and hence $bc=c\in P_B$. Similarly, if $s=0$, then $bc=b\in P_B$.

Suppose now that $r,s\geq 1$, and set $n=r+s$. Then $r,s<n$, so both $B_r$ and $B_s$ had already been constructed when $A_n$ was defined. Consequently, the product $bc$ belongs to the middle set in the definition of $A_n$. Hence
$bc\in A_n\subseteq P_n\subseteq P_B$,
as required.

\end{proof}

\begin{corollary}
{\rm For every $n\in\mathbb{N}_0$,
$$
P_B\cap V_n=P_n.
$$
More generally, the $B$-expansion of an element of $V_n$ is its $B_{\leq n}$-expansion with all remaining coordinates zero.}
\end{corollary}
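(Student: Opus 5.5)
Both assertions follow from the uniqueness of expansions in the basis $B$ from Theorem 5.1, together with the fact that $B_{\leq n}\subseteq B$. We prove the second assertion first and then deduce the first.

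\textbf{Expansions.} Fix $n\in\mathbb{N}_0$ and $v\in V_n$. Since $B_{\leq n}$ is a basis of $V_n$, we can write
$$v=\sum_{b\in B_{\leq n}}q_b b$$
with finitely many nonzero rational $q_b$. Extend this by setting $q_b=0$ for every $b\in B\setminus B_{\leq n}$. The result is a finitely supported expansion of $v$ over $B$. By Theorem 5.1(2), $B$ is a $\mathbb{Q}$-basis of $K$, so expansions over $B$ are unique. Hence this extended expansion is \emph{the} $B$-expansion of $v$, which is the second assertion.

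\textbf{The cone identity.} We check the two inclusions.

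For $P_n\subseteq P_B\cap V_n$: an element of $P_n$ is a nonnegative rational combination of elements of $B_{\leq n}\subseteq B$. It therefore lies in $V_n$, and it also lies in $P_B$.

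For $P_B\cap V_n\subseteq P_n$: take $v\in P_B\cap V_n$. Because $v\in P_B$, its $B$-expansion has all coordinates nonnegative. Because $v\in V_n$, the previous step shows that this $B$-expansion is supported in $B_{\leq n}$ and coincides with the $B_{\leq n}$-expansion of $v$. So the $B_{\leq n}$-coordinates of $v$ are nonnegative, which means $v\in P_{B_{\leq n}}=P_n$.

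No real obstacle arises here. The only point requiring care is that $P_B$ is defined through coordinates in $B$, so one must use that $B$ is linearly independent, not merely spanning; this is exactly why uniqueness of expansions is invoked. One could alternatively iterate the relation $P_m\cap V_{m-1}=P_{m-1}$ from the construction and pass to the union, but the uniqueness argument is more direct.
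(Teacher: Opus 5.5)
Your proof is correct and follows the same route as the paper, whose one-line proof notes that $B$ extends the basis $B_{\leq n}$ of $V_n$ and invokes uniqueness of $B$-coordinates. You have simply written out the steps the paper leaves implicit: extending the $B_{\leq n}$-expansion by zeros, and describing $P_B$ and $P_n$ coordinatewise.
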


\begin{proof}
The set $B$ extends the basis $B_{\leq n}$, so uniqueness of basis coordinates gives the assertion.
\end{proof}

\section{The coordinate lattice order on the countable field}

\begin{proposition}
{\rm Let $B$ be as in Theorem 5.1. Define
$$
P_K=P_B\text{ and }x\leq_K y\text{ if and only if }y-x\in P_K.
$$
Then $\leq_K$ is a compatible lattice order on the original field $K$. For
$$
x=\sum_{b\in B}x_b b,\qquad y=\sum_{b\in B}y_b b,
$$
we have

(1) $x\leq_K y$ if and only if $x_b\leq y_b \quad(b\in B)$;

(2) $x\vee_K y=\sum_{b\in B}\max\{x_b,y_b\}\,b$;

(3) $x\wedge_K y=\sum_{b\in B}\min\{x_b,y_b\}\,b$.

The restriction of $\leq_K$ to $\mathbb{Q}$ is its usual order and $0<_K1$.}
\end{proposition}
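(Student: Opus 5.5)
The plan is to check the positive-cone axioms of Lemma 3.1 for $P_K=P_B$, and then read off the order and the lattice operations coordinatewise, using that $B$ is a $\mathbb{Q}$-basis of $K$ by Theorem 5.1(2).

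\textbf{Cone axioms.} Write each element uniquely as $x=\sum_b x_b b$ with finite support.
\begin{itemize}
\item $0\in P_B$ holds trivially.
\item $P_B+P_B\subseteq P_B$ holds because adding two families of nonnegative coordinates gives nonnegative coordinates.
\item $P_B\cap(-P_B)=\{0\}$ follows from uniqueness of coordinates. If $x$ and $-x$ both lie in $P_B$, then $x_b\geq0$ and $-x_b\geq0$ for every $b$, so every coordinate vanishes.
\item $P_BP_B\subseteq P_B$ is the only step needing the construction. For $x=\sum_b x_b b$ and $y=\sum_c y_c c$ with nonnegative coefficients, distributivity gives $xy=\sum_{b,c}x_by_c\,bc$, a finite sum. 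By Theorem 5.1(4), each $bc\in P_B$. Since $P_B$ is closed under addition and under multiplication by nonnegative rationals, $xy\in P_B$.
\end{itemize}
Lemma 3.1 then shows that $\leq_K$ is a partial order. Translation invariance is immediate from the definition $y-x\in P_K$, so the order is compatible with the field operations.

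\textbf{Statement (1).} The element $y-x$ has coordinates $y_b-x_b$, by uniqueness of the expansion. Hence $y-x\in P_B$ exactly when $x_b\leq y_b$ for all $b$.

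\textbf{Statements (2) and (3).} Set $z=\sum_b\max\{x_b,y_b\}b$. This is a finite sum, since its support lies in the union of the supports of $x$ and $y$. By (1), $z$ is an upper bound of $x$ and $y$. If $u$ is any upper bound, then again by (1) we have $u_b\geq x_b$ and $u_b\geq y_b$ for each $b$, so $u_b\geq\max\{x_b,y_b\}$, which gives $z\leq_K u$. Thus $z=x\vee_K y$. The meet in (3) is handled by the symmetric argument with minima. This shows that $\leq_K$ is a lattice order.

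\textbf{Restriction to $\mathbb{Q}$.} Since $1\in B_0\subseteq B$, each rational $q$ has $B$-expansion $q\cdot 1$. For $p,q\in\mathbb{Q}$, statement (1) gives $p\leq_K q$ if and only if $p\leq q$, so the restriction is the usual order. In particular $0<_K1$.

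The only point needing care is the multiplicative closure, which depends on Theorem 5.1(4) together with finiteness of supports. Everything else is coordinate bookkeeping that rests on $B$ being a basis.
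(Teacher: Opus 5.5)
Your proposal is correct and follows essentially the same route as the paper. You verify the cone axioms for $P_B$, with multiplicative closure coming from Theorem 5.1(4) and distributivity, characterize the order coordinatewise by uniqueness of $B$-expansions, obtain joins and meets as coordinatewise maxima and minima, and use $1\in B$ for the restriction to $\mathbb{Q}$; the only cosmetic difference is that you cite Lemma 3.1 for the partial-order axioms, whereas the paper checks reflexivity, antisymmetry and transitivity directly.
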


\begin{proof}
By Theorem~5.1, the set $B$ is a $\mathbb{Q}$-basis of $K$, the element $1$ belongs to $B$, and $bc\in P_B$ for every $b,c\in B$. Put $P=P_K=P_B$.

Every $x\in K$ has a unique expansion
$x=\sum_{b\in B}x_b b$,
where $x_b\in\mathbb{Q}$ and only finitely many coefficients are nonzero. We write
$\operatorname{supp}_B(x)=\{b\in B:x_b\neq 0\}$.
Thus $\operatorname{supp}_B(x)$ is finite. All sums indexed by $B$ below are understood in this finite-support sense.

By definition,
$$P=\{\sum_{b\in S}q_b b:S\subseteq B\text{ is finite and }q_b\in\mathbb{Q}_{\geq 0}\}.$$
The uniqueness of the $B$-expansion implies that, for every $x\in K$,
$x\in P$ if and only if $x_b\geq 0$ for every $b\in B$.

We first verify the properties of $P$ needed to define an order. The zero element belongs to $P$, since all its coordinates are zero. If $p,q\in P$, then
$p+q=\sum_{b\in B}(p_b+q_b)b$.
Every coefficient $p_b+q_b$ is nonnegative, and the support of $p+q$ is contained in the finite set $\operatorname{supp}_B(p)\cup\operatorname{supp}_B(q)$. Hence $p+q\in P$. Thus $P+P\subseteq P$.

Similarly, if $a\in\mathbb{Q}_{\geq 0}$ and $p\in P$, then
$ap=\sum_{b\in B}(ap_b)b$.
All coefficients are nonnegative, so $ap\in P$. Therefore $\mathbb{Q}_{\geq 0}P\subseteq P$. In particular, $P$ is closed under finite nonnegative rational linear combinations.

We also have $P\cap(-P)=\{0\}$. Indeed, suppose that $p\in P$ and $-p\in P$. Write
$p=\sum_{b\in B}p_b b$.
Since $p\in P$, every $p_b$ is nonnegative. On the other hand,
$-p=\sum_{b\in B}(-p_b)b$
is the unique $B$-expansion of $-p$. Since $-p\in P$, every $-p_b$ is also nonnegative. Consequently, $p_b=0$ for all $b\in B$, and hence $p=0$. The reverse inclusion follows from $0\in P$.

Define the relation $\leq_K$ on $K$ by
$x\leq_K y\iff y-x\in P$.
We verify that this is a partial order.

For every $x\in K$, we have $x-x=0\in P$, so $x\leq_K x$. Thus the relation is reflexive.

Suppose that $x\leq_K y$ and $y\leq_K x$. Then $y-x\in P$ and $x-y=-(y-x)\in P$. Hence $y-x\in P\cap(-P)=\{0\}$, which implies $x=y$. Thus the relation is antisymmetric.

Suppose that $x\leq_K y$ and $y\leq_K z$. Then $y-x\in P$ and $z-y\in P$. Since $P$ is closed under addition,
$z-x=(z-y)+(y-x)\in P$.
Therefore $x\leq_K z$, proving transitivity. Hence $\leq_K$ is a partial order.

This order is compatible with the original addition of $K$. Indeed, for any $x,y,z\in K$, the identity
$(y+z)-(x+z)=y-x$
gives
$x\leq_K y\iff x+z\leq_K y+z$.
Moreover, if $x\leq_K y$ and $a\in\mathbb{Q}_{\geq 0}$, then
$ay-ax=a(y-x)\in P$,
so $ax\leq_K ay$.

We next verify compatibility with multiplication. Let $x,y\in P$. There are finite sets $S,T\subseteq B$ and nonnegative rational numbers $q_b$ and $r_c$ such that
$x=\sum_{b\in S}q_b b$
and
$y=\sum_{c\in T}r_c c$.
Using the original multiplication of $K$ and distributivity, we obtain
$xy=\sum_{b\in S}\sum_{c\in T}q_b r_c\,bc$.
For every pair $(b,c)\in S\times T$, Theorem~5.1 gives $bc\in P$. Since $q_b r_c\geq 0$, closure under nonnegative rational scalar multiplication yields $q_b r_c\,bc\in P$. There are only finitely many such pairs, so closure under addition gives $xy\in P$. Thus $PP\subseteq P$.

Consequently, $0\leq_K x$ and $0\leq_K y$ imply $0\leq_K xy$. More generally, suppose that $x\leq_K y$ and $0\leq_K a$. Then $y-x\in P$ and $a\in P$, so
$ay-ax=a(y-x)\in P$.
Hence $ax\leq_K ay$. Since $K$ is commutative, we also have $xa\leq_K ya$. This proves the required compatibility with multiplication.

We now establish the coordinate characterization of the order. Let
$x=\sum_{b\in B}x_b b$
and
$y=\sum_{b\in B}y_b b$.
Then
$y-x=\sum_{b\in B}(y_b-x_b)b$.
By uniqueness of the $B$-expansion and the coordinate description of $P$,
$$x\leq_K y\iff y-x\in P\iff y_b-x_b\geq 0\text{ for every }b\in B.$$
Equivalently,
$x\leq_K y\iff x_b\leq y_b\text{ for every }b\in B$.

To prove that this partial order is a lattice order, fix $x,y\in K$ and define
$u=\sum_{b\in B}\max\{x_b,y_b\}b$.
This expression defines an element of $K$. Indeed, if
$b\notin\operatorname{supp}_B(x)\cup\operatorname{supp}_B(y)$,
then $x_b=y_b=0$, and hence $\max\{x_b,y_b\}=0$. Thus the support of $u$ is contained in the finite set $\operatorname{supp}_B(x)\cup\operatorname{supp}_B(y)$.

For every $b\in B$, we have
$x_b\leq\max\{x_b,y_b\}=u_b$
and
$y_b\leq\max\{x_b,y_b\}=u_b$.
The coordinate characterization of the order therefore gives $x\leq_K u$ and $y\leq_K u$. Hence $u$ is an upper bound of $x$ and $y$.

Let $v\in K$ be any other upper bound of $x$ and $y$. Then $x\leq_K v$ and $y\leq_K v$, so $x_b\leq v_b$ and $y_b\leq v_b$ for every $b\in B$. Consequently,
$u_b=\max\{x_b,y_b\}\leq v_b$
for every $b\in B$. Applying the coordinate characterization once more, we obtain $u\leq_K v$. Thus $u$ is the least upper bound of $x$ and $y$, proving
$x\vee_K y=\sum_{b\in B}\max\{x_b,y_b\}b$.

Similarly, define
$w=\sum_{b\in B}\min\{x_b,y_b\}b$.
Its support is also contained in $\operatorname{supp}_B(x)\cup\operatorname{supp}_B(y)$, so $w\in K$. For every $b\in B$, we have $w_b\leq x_b$ and $w_b\leq y_b$. Therefore $w\leq_K x$ and $w\leq_K y$, so $w$ is a lower bound of $x$ and $y$.

If $v\in K$ is any other lower bound, then $v\leq_K x$ and $v\leq_K y$. Hence $v_b\leq x_b$ and $v_b\leq y_b$ for every $b\in B$, which implies
$v_b\leq\min\{x_b,y_b\}=w_b$
for every $b\in B$. Thus $v\leq_K w$. This proves that $w$ is the greatest lower bound of $x$ and $y$, and therefore
$x\wedge_K y=\sum_{b\in B}\min\{x_b,y_b\}b$.

Equivalently, the meet formula follows from
$x\wedge_K y=-\bigl((-x)\vee_K(-y)\bigr)$,
because
$-\max\{-x_b,-y_b\}=\min\{x_b,y_b\}$
for every $b\in B$. Since every pair of elements has both a least upper bound and a greatest lower bound, $\leq_K$ is a lattice order. Together with the compatibility properties already proved, this makes the original ring structure of $K$ a lattice-ordered ring.

Finally, we check the restriction of the order to $\mathbb{Q}$. Since $1\in B$, every $q\in\mathbb{Q}$ has the $B$-expansion $q=q\cdot 1$, with coefficient zero at every basis element other than $1$. By uniqueness, these are its coordinates. Therefore $q\in P$ if and only if $q\geq 0$ in the usual order on $\mathbb{Q}$. Hence
$P_K\cap\mathbb{Q}=\mathbb{Q}_{\geq 0}$.

For $p,q\in\mathbb{Q}$, it follows that
$$p\leq_K q\iff q-p\in P_K\cap\mathbb{Q}\iff q-p\geq 0\iff p\leq q,$$
where the final inequality is taken in the usual order on $\mathbb{Q}$. Thus the restriction of $\leq_K$ to $\mathbb{Q}$ is its usual order. In particular, $1\in P_K$ and $1\neq 0$, so $0<_K1$.
\end{proof}

\begin{proposition}
{\rm The choices in Theorem 5.1 can be made so that
$$
B_1=\{t,t-i\},\ B_2=\{u,v\}\text{ and }
u=t^2+it-1, v=(t-i)^2=t^2-2it-1.$$
For these choices,
$$
t^2=1+\frac23u+\frac13v,\ t(t-i)=1+\frac13u+\frac23v\text{ and }(t-i)^2=v.$$
Moreover, in the final order on $K$,
$$
0\vee_K i=t,\ 0\wedge_K i=i-t\text{ and }1\wedge_K t=0.$$}
\end{proposition}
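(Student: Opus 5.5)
The plan is to observe that the inductive step of Theorem~5.1 never uses the particular formula for $B_n$ produced by Lemma~3.3. It only uses the three conclusions of that lemma: $B_{\leq n}$ is a basis of $W_n$, $\lambda_n(b)=1$ for $b\in B_n$, and $A_n\subseteq P_n$ together with $P_n\cap V_{n-1}=P_{n-1}$. So at stage $n=2$ I will replace the lemma's output by the explicit set $B_2=\{u,v\}$ and check these three properties directly. For every $n\geq 3$ the construction then proceeds exactly as in Theorem~5.1, and all conclusions of that theorem remain valid.

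\textbf{Identifying $A_2$.} Since $N_1=1$, $N_2\geq 3$ and the sequence $(N_j)$ is strictly increasing, no index $j$ satisfies $N_j=2$. Hence
\[
A_2=\{t^2,\ t(t-i),\ (t-i)^2\},
\]
and therefore $W_2=V_1+\operatorname{span}_{\mathbb{Q}}A_2=\operatorname{span}_{\mathbb{Q}}\{1,t,i,t^2,it\}$.

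\textbf{Checking the three properties at stage $2$.}
\begin{enumerate}
\item \emph{The displayed identities.} These are direct expansions. For example,
\[
1+\tfrac23u+\tfrac13v=1+t^2+\bigl(\tfrac23-\tfrac23\bigr)it-1=t^2,
\]
and the identity for $t(t-i)$ is verified in the same way. In particular $A_2\subseteq\operatorname{span}\{1,u,v\}$, so $W_2=V_1+\operatorname{span}\{u,v\}$.
\item \emph{Basis.} By Lemma~3.2 it suffices that $u+V_1$ and $v+V_1$ are independent in $W_2/V_1$. Modulo $V_1$ they are $t^2+it$ and $t^2-2it$, which are independent provided $1,t,i,t^2,it$ are $\mathbb{Q}$-independent. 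That independence follows by splitting a relation into real and imaginary parts, which gives two polynomial relations in $t$ over $\mathbb{Q}$; these vanish because $t$ is real and transcendental.
\item \emph{Normalization.} We have $\ell_2(u)=\operatorname{res}(1+i/t-1/t^2)=1$, since $\delta(i/t)=-1$ and $\delta(1/t^2)=-2$. Likewise $\ell_2(v)=1$. Lemma~4.8 then gives $\delta(u)=\delta(v)=2$.
\item \emph{Positivity and cone intersection.} The identities in the first item have nonnegative coefficients, so $A_2\subseteq P_2$. For $P_2\cap V_1=P_1$, take $p\in P_2\cap V_1$ and write it with nonnegative coordinates. Applying $\ell_2$, which vanishes on $V_1$, forces the $u$- and $v$-coefficients to sum to $0$. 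Hence both are $0$, exactly as in the last step of the proof of Lemma~3.3.
\end{enumerate}

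\textbf{The lattice identities.} In $B_{\leq 1}$ we have $i=t-(t-i)$, so $i$ has coordinate $1$ at $t$, coordinate $-1$ at $t-i$, and all other coordinates $0$. By Corollary~5.2 these are also its $B$-coordinates. Proposition~6.1 then gives the join and meet coordinatewise:
\begin{itemize}
\item $0\vee_K i$ has coordinates $\max\{0,1\}=1$ at $t$ and $\max\{0,-1\}=0$ at $t-i$, so $0\vee_K i=t$.
\item $0\wedge_K i=-(t-i)=i-t$.
\item For $1\wedge_K t$, the element $1$ has coordinate $1$ at the basis element $1$, while $t$ has coordinate $1$ at the basis element $t$. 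Taking coordinatewise minima gives $0$, so $1\wedge_K t=0$.
\end{itemize}

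The main obstacle is the legitimacy of substituting a hand-chosen $B_2$ for the output of Lemma~3.3. I would handle it by stating explicitly that the induction hypothesis for $n\geq 3$ involves only the listed properties: basis, $\delta$-values, $\ell$-values, and the cone conditions. Those have all been verified above for the chosen $B_2$.
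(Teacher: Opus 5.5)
Your proof is correct and follows essentially the same route as the paper's. The paper also identifies $A_2=\{t^2,t(t-i),(t-i)^2\}$, verifies the basis, normalization, positivity and contraction conditions for $\{u,v\}$ directly, and reads the three lattice identities off the coordinates of $i=t-(t-i)$ via Corollary~5.2 and Proposition~6.1. The one difference is in how the substitution of $B_2$ is justified: the paper also shows that $u,v$ are literally the output of the recipe in Lemma~3.3, taking $e_1=t^2$, $e_2=t(t-i)$, $M=1$, $h=1$, $G=3$, whereas you argue that the later induction uses only the stage-$2$ properties; either justification suffices.
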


\begin{proof}

We use the notation and recursive construction from Theorem~5.1. Choose the first layer as $B_1=\{t,t-i\}$, and retain the choices $N_1=1$ and $N_2\geq 3$ for the strictly increasing sequence $(N_j)_{j\geq 1}$. The space obtained after the first stage is
$V_1=\operatorname{span}_{\mathbb{Q}}\{1,t,t-i\}=\operatorname{span}_{\mathbb{Q}}\{1,t,i\}$.

We first determine the target set at the second stage. Since $N_1=1$ and $N_j\geq N_2\geq 3$ for every $j\geq 2$, there is no index $j$ such that $N_j=2$. Thus no element of the form $t^2-x_j$ occurs at this stage. Moreover, the only positive integers $r,s$ satisfying $r+s=2$ are $r=s=1$. Consequently,
$A_2=\{t^2\}\cup\{bc:b,c\in B_1\}=\{t^2,t(t-i),(t-i)^2\}$.

Put $e_1=t^2$, $e_2=t(t-i)=t^2-it$, and $a_3=(t-i)^2=t^2-2it-1$. Then
$a_3=-1-e_1+2e_2$.
Indeed,
$$-1-e_1+2e_2=-1-t^2+2(t^2-it)=t^2-2it-1.$$
It follows that
$W_2=V_1+\operatorname{span}_{\mathbb{Q}}A_2=V_1+\operatorname{span}_{\mathbb{Q}}\{e_1,e_2\}$.

We claim that $e_1+V_1$ and $e_2+V_1$ are linearly independent in $W_2/V_1$. First, note that $t$ is transcendental over $\mathbb{Q}(i)$. Otherwise, since $[\mathbb{Q}(i):\mathbb{Q}]=2$, the tower formula would give
$$[\mathbb{Q}(i,t):\mathbb{Q}]=[\mathbb{Q}(i,t):\mathbb{Q}(i)]\,[\mathbb{Q}(i):\mathbb{Q}]<\infty,$$
forcing $t$ to be algebraic over $\mathbb{Q}$, contrary to its choice.

Now suppose that $\alpha e_1+\beta e_2\in V_1$ for some $\alpha,\beta\in\mathbb{Q}$. There exist $c_0,c_1,c_2\in\mathbb{Q}$ such that
$\alpha e_1+\beta e_2=c_0+c_1t+c_2i$.
Substitution of the expressions for $e_1$ and $e_2$ yields
$$(\alpha+\beta)t^2+(-\beta i-c_1)t-(c_0+c_2i)=0.$$
Since $t$ is transcendental over $\mathbb{Q}(i)$, all coefficients of this polynomial must vanish. Hence
$\alpha+\beta=0$, $-\beta i-c_1=0$, and $c_0+c_2i=0$.
Comparing real and imaginary parts in the last two equalities gives $\beta=c_1=0$ and $c_0=c_2=0$. The first equality then gives $\alpha=0$. This proves the claim. Since these two cosets also span $W_2/V_1$, they form a basis of that quotient.

We now make the choice in the cone extension lemma explicit. Relative to the quotient basis $e_1+V_1,e_2+V_1$, the three elements of $A_2$ have coordinate pairs $(1,0)$, $(0,1)$, and $(-1,2)$, respectively. Their corresponding components in $V_1$ are $0$, $0$, and $-1$.

Thus we may take $M=1$: adding $M$ to each of these quotient coordinates gives the nonnegative pairs $(2,1)$, $(1,2)$, and $(0,3)$. We may also take $h=1\in P_1$, since the old components become $1$, $1$, and $0$ after addition of $h$, and all three belong to $P_1$.

The quotient dimension is $d=2$, so the constant in the construction is $D=1+dM=3$. The two new elements prescribed by the cone extension construction are therefore
$u=De_1-M(e_1+e_2)-h=2e_1-e_2-1$
and
$v=De_2-M(e_1+e_2)-h=2e_2-e_1-1$.
Expanding these expressions in $K$, we obtain
$$u=2t^2-(t^2-it)-1=t^2+it-1$$
and
$$v=2(t^2-it)-t^2-1=t^2-2it-1=(t-i)^2.$$

For completeness, we verify directly that $B_2=\{u,v\}$ satisfies all requirements of the second stage.

Modulo $V_1$, the defining equations become
$$u+V_1=2(e_1+V_1)-(e_2+V_1)$$
and
$$v+V_1=-(e_1+V_1)+2(e_2+V_1).$$
The corresponding change-of-basis matrix is
$\begin{pmatrix}2&-1\\-1&2\end{pmatrix}$,
whose determinant is $3\neq 0$. Hence $u+V_1$ and $v+V_1$ also form a basis of $W_2/V_1$. Since $\{1,t,t-i\}$ is a basis of $V_1$, the set $\{1,t,t-i,u,v\}$ is a basis of $W_2$. We therefore set $V_2=W_2$ and $B_2=\{u,v\}$.

We next check the degree and normalization conditions. Since $\delta(t)=\delta(t-i)=1$, multiplicativity gives
$\delta(e_1)=\delta(t^2)=2$,
$\delta(e_2)=\delta(t(t-i))=2$,
and
$\delta(a_3)=\delta((t-i)^2)=2$.
Also, $V_1\subseteq K_1\subseteq K_2$. As $D_2$ is a $\mathbb{Q}$-vector space, it follows that
$W_2=V_1+\operatorname{span}_{\mathbb{Q}}A_2\subseteq K_2$.
In particular, $u,v\in K_2$.

By Lemma~4.8,
$\ell_2(e_1)=1$
and
$\ell_2(e_2)=\ell_1(t)\ell_1(t-i)=1$.
Since $\delta(1)=0<2$, the same lemma gives $\ell_2(1)=0$. Using linearity, we conclude that
$$\ell_2(u)=2\ell_2(e_1)-\ell_2(e_2)-\ell_2(1)=1$$
and
$$\ell_2(v)=2\ell_2(e_2)-\ell_2(e_1)-\ell_2(1)=1.$$
Because $u,v\in D_2$ and their $\ell_2$-values are nonzero, the equivalence $\ell_2(x)=0\iff\delta(x)<2$ implies $\delta(u)=\delta(v)=2$.

We now derive the asserted positive expansions. From
$u=2e_1-e_2-1$
and
$v=2e_2-e_1-1$,
we obtain
$$2u+v=2(2e_1-e_2-1)+(2e_2-e_1-1)=3e_1-3.$$
Therefore
$t^2=e_1=1+\frac{2}{3}u+\frac{1}{3}v$.
Similarly,
$$u+2v=(2e_1-e_2-1)+2(2e_2-e_1-1)=3e_2-3,$$
and hence
$t(t-i)=e_2=1+\frac{1}{3}u+\frac{2}{3}v$.
Finally, the definition of $v$ gives $(t-i)^2=v$.

Let $P_2=P_{\{1,t,t-i,u,v\}}$. Every coefficient in these three expansions is a nonnegative rational number. Consequently, $A_2\subseteq P_2$.

We also verify the contraction condition $P_2\cap V_1=P_1$. Since the old basis is contained in the new basis, we immediately have $P_1\subseteq P_2\cap V_1$. Conversely, let $z\in P_2\cap V_1$. By the definition of $P_2$, we can write
$z=p+\alpha u+\beta v$,
where $p\in P_1$ and $\alpha,\beta\in\mathbb{Q}_{\geq 0}$. Every element of $V_1$ has $\delta$-value at most $1$, so $\ell_2$ vanishes on $V_1$. Applying $\ell_2$ to this expression, we obtain
$$0=\ell_2(z)=\ell_2(p)+\alpha\ell_2(u)+\beta\ell_2(v)=\alpha+\beta.$$
Since $\alpha$ and $\beta$ are nonnegative rational numbers, both must be zero. Thus $z=p\in P_1$, proving the reverse inclusion.

We have checked that the prescribed second layer gives a basis extension, that $\delta(b)=2$ and $\ell_2(b)=1$ for every $b\in B_2$, that $A_2\subseteq P_2$, and that $P_2\cap V_1=P_1$. Thus this choice satisfies all conditions of the recursive construction. Starting from these first two layers, we continue the construction of Theorem~5.1 for every stage $n\geq 3$. This produces a final basis $B$ with the required properties.

It remains to establish the three lattice identities in the final order on $K$. By Corollary~5.2, the final $B$-expansion of any element of $V_2$ is its $B_{\leq 2}$-expansion with zero coefficients at all later basis elements. Indeed, $B$ contains $B_{\leq 2}$, so adjoining zero coefficients gives a $B$-expansion; uniqueness of the final basis expansion shows that these are the actual coordinates. Thus the coordinate calculations below remain valid after every later stage.

In the final basis, we have
$i=t-(t-i)$.
Accordingly, the coordinate of $i$ at the basis element $t$ is $1$, its coordinate at the basis element $t-i$ is $-1$, and all its other coordinates are zero. The zero element has every coordinate equal to zero.

By the coordinate formula for joins in Proposition~6.1, we therefore obtain
$$0\vee_K i=\max\{0,1\}t+\max\{0,-1\}(t-i)=t.$$
All omitted coordinates are zero because they are maxima of two zero coordinates.

Similarly, the coordinate formula for meets gives
$$0\wedge_K i=\min\{0,1\}t+\min\{0,-1\}(t-i)=-(t-i)=i-t.$$

Finally, $1$ and $t$ are distinct elements of the final basis. The only nonzero coordinate of $1$ is its coordinate $1$ at the basis element $1$, while the only nonzero coordinate of $t$ is its coordinate $1$ at the basis element $t$. Thus the minimum of their two coordinates is zero at each basis element. Proposition~6.1 consequently gives $1\wedge_K t=0$.

\end{proof}

\begin{proposition}
{\rm Every nonzero $p\in P_K$ has nonnegative integral degree. More precisely, if
$$p=\sum_{b\in B}q_b b\ne0,\ q_b\in\mathbb{Q}_{\geq0}\text{ and }
m=\max\{n\mid \exists b\in B_n,\ q_b>0\},$$
then
$$
\delta(p)=m,\ \ell_m(p)=\sum_{b\in B_m}q_b\in\mathbb{Q}_{>0}.
$$
In particular,
$$t\in P_K,\ t^{-1}\notin P_K,\ -t^{-1}\notin P_K,i\notin P_K\text{ and } -i\notin P_K.$$}
\end{proposition}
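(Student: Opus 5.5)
The plan is to split $p$ by layers and evaluate with the functional $\ell_m$ from Lemma~4.8, using the layer properties $\delta(b)=n$ and $\ell_n(b)=1$ for $b\in B_n$ from Theorem~5.1(3).

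Write $p=\sum_{b\in B}q_b b$ with $q_b\geq0$ and finite support. Since $p\neq0$, the support is nonempty. Because each $B_n$ is finite, only finitely many layers are involved, so $m$ is a well-defined element of $\mathbb{N}_0$. Decompose
$$p=\sum_{b\in B_m}q_b b+\sum_{b\in B_{\leq m-1}}q_b b=:p_m+p',$$
where the second sum is empty when $m=0$. I use $B_{\leq m-1}$ because no basis element of higher layer occurs, by maximality of $m$.

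\emph{Step 1: $p\in K_m$.} Every basis element in the support has $\delta$-value at most $m$, and nonzero rational scalars do not change $\delta$. Repeated use of the ultrametric inequality then gives $\delta(p)\leq m$, so $p\in K_m$.

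\emph{Step 2: compute $\ell_m(p)$.} Each $b\in B_{\leq m-1}$ has $\delta(b)<m$, so $\ell_m(b)=0$ by Lemma~4.8(1). Each $b\in B_m$ has $\ell_m(b)=1$. By linearity of $\ell_m$ on $K_m$ (with $\mathbb{Q}$ identified inside the residue field),
$$\ell_m(p)=\sum_{b\in B_m}q_b.$$
This is a sum of nonnegative rationals, at least one of which is positive by the choice of $m$. Hence $\ell_m(p)\in\mathbb{Q}_{>0}$.

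\emph{Step 3: $\delta(p)=m$.} The key point is that positivity of the coefficients prevents cancellation in the top layer. Since $\ell_m(p)\neq0$, Lemma~4.8(1) gives $\delta(p)\not<m$. Combined with Step 1, $\delta(p)=m$, which is a nonnegative integer. For a general coefficient vector $\ell_m(p)$ could vanish, so nonnegativity is exactly what this step needs.

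\emph{Consequences.}
\begin{itemize}
\item $t\in B_1\subseteq P_K$.
\item $\delta(t^{-1})=-1$ and $\delta(-t^{-1})=-1$, since $\delta(-1)=0$. Neither is a nonnegative integer, and neither element is zero, so neither lies in $P_K$.
\item For $\pm i$, the degree argument does not apply: $\delta(i)=0$ is a nonnegative integer. Instead I use the explicit coordinates. Since $t,t-i\in B$, the unique expansion is $i=1\cdot t+(-1)\cdot(t-i)$, so $i$ has a negative coordinate and $i\notin P_K$. Likewise $-i$ has coordinate $-1$ at $t$, so $-i\notin P_K$.
\end{itemize}

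There is no serious obstacle. The points needing care are bookkeeping ones: confirming that the basis elements not in the support contribute nothing, and noting that the $\pm i$ cases must be handled by coordinates rather than by degree.
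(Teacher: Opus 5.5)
Your proposal is correct and follows the paper's proof essentially step for step. You bound $\delta(p)\le m$ by the ultrametric inequality, compute $\ell_m(p)=\sum_{b\in B_m}q_b$ from the kernel characterization and the normalization $\ell_n(b)=1$, and use nonnegativity of the coefficients together with injectivity of $\mathbb{Q}\to k$ to get $\ell_m(p)\neq 0$ and hence $\delta(p)=m$; like the paper, you handle $\pm i$ through the explicit coordinates coming from $i=t-(t-i)$.

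One small wording fix: $m$ is well defined because the support of $p$ is finite, not because each $B_n$ is finite. Your split $p=p_m+p'$ also uses that the layers are pairwise disjoint, which holds since $\delta(b)=n$ for every $b\in B_n$.
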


\begin{proof}
Recall that $P_K=P_B$ is the set of all finite nonnegative rational linear combinations of elements of $B$. By Theorem~5.1, we have $B=\bigcup_{n\geq 0}B_n$, and every $b\in B_n$ satisfies $\delta(b)=n$ and $\ell_n(b)=1$. In particular, the layers are pairwise disjoint: if $b\in B_r\cap B_s$, then $r=\delta(b)=s$.

Let $p\in P_K\setminus\{0\}$. Since $B$ is a $\mathbb{Q}$-basis of $K$, the element $p$ has a unique expansion
$p=\sum_{b\in B}q_b b$,
where $q_b\in\mathbb{Q}_{\geq 0}$ and only finitely many coefficients are nonzero.

Set $S=\{b\in B:q_b>0\}$. This set is finite, and it is nonempty because $p\neq 0$. Since every element of $B$ belongs to exactly one layer, the set
$I=\{n\in\mathbb{N}_0:S\cap B_n\neq\varnothing\}$
is finite and nonempty. Its maximum therefore exists. Put
$m=\max I=\max\{n\in\mathbb{N}_0:(\exists b\in B_n)\ q_b>0\}$.
By definition, $m\in\mathbb{N}_0$, every element of $S$ belongs to a layer of index at most $m$, and at least one element of $S$ belongs to $B_m$.

We first show that $p\in K_m$. If $b\in S\cap B_r$, then $q_b$ is a nonzero rational number, so $\delta(q_b)=0$. Consequently,
$$\delta(q_b b)=\delta(q_b)+\delta(b)=r\leq m.$$
Since the expansion of $p$ is finite, repeated application of the inequality $\delta(x+y)\leq\max\{\delta(x),\delta(y)\}$ gives
$\delta(p)\leq\max_{b\in S}\delta(q_b b)\leq m$.
Thus $p\in K_m$, and $\ell_m(p)$ is well defined.

We now compute $\ell_m(p)$. Every $b\in S$ belongs to $K_m$, so the $\mathbb{Q}$-linearity of $\ell_m$ gives
$\ell_m(p)=\sum_{b\in S}q_b\ell_m(b)$.
If $b\in S\cap B_r$ with $r<m$, then $\delta(b)=r<m$. The kernel characterization in Lemma~4.8 therefore gives $\ell_m(b)=0$. If $b\in S\cap B_m$, then $\ell_m(b)=1$ by the normalization of the basis elements. Hence
$$\ell_m(p)=\left(\sum_{b\in S\cap B_m}q_b\right)1_k=\left(\sum_{b\in B_m}q_b\right)1_k.$$

Let $c=\sum_{b\in B_m}q_b\in\mathbb{Q}$. All summands are nonnegative, and at least one is strictly positive because $S\cap B_m\neq\varnothing$. Therefore $c>0$ in the usual order on $\mathbb{Q}$.

By Lemma~4.8, the canonical map $\mathbb{Q}\to k$ is injective. Thus the image $c\,1_k$ of this nonzero rational number is nonzero in $k$. Identifying $\mathbb{Q}$ with its canonical image in $k$, we obtain
$\ell_m(p)=\sum_{b\in B_m}q_b\in\mathbb{Q}_{>0}$.
Here the notation $\mathbb{Q}_{>0}$ refers to the image of the positive rational numbers under this embedding.

Since $p\in K_m$ and $\ell_m(p)\neq 0$, Lemma~4.8 implies that $\delta(p)$ is not strictly less than $m$. Together with the already established inequality $\delta(p)\leq m$, this yields
$\delta(p)=m$.
Thus every nonzero element of $P_K$ has nonnegative integral degree, and the asserted formula for $\ell_m(p)$ also holds. The argument includes $m=0$, since $\ell_0$ is defined and there are then no lower layers contributing to the sum.

We next prove the stated inclusions and exclusions. Since $t\in B_1\subseteq B$, the element $t$ is a nonnegative rational linear combination of basis elements, with coefficient $1$ at $t$ and zero elsewhere. Hence $t\in P_K$.

Because $t\neq 0$ and $tt^{-1}=1$, multiplicativity gives
$$0=\delta(1)=\delta(t)+\delta(t^{-1})=1+\delta(t^{-1}).$$
Therefore $\delta(t^{-1})=-1$. Also, $\delta(-1)=0$, since $-1$ is a nonzero rational number. It follows that
$\delta(-t^{-1})=\delta(-1)+\delta(t^{-1})=-1$.

Both $t^{-1}$ and $-t^{-1}$ are nonzero. If either belonged to $P_K$, the first part of the proof would force its degree to belong to $\mathbb{N}_0$, contradicting its degree $-1$. Consequently, $t^{-1}\notin P_K$ and $-t^{-1}\notin P_K$.

Finally, $t$ and $t-i$ are distinct elements of the basis $B$, and
$i=t-(t-i)$.
By uniqueness of the $B$-expansion, the coordinate of $i$ at $t$ is $1$, its coordinate at $t-i$ is $-1$, and all its other coordinates are zero. An element belongs to $P_B$ precisely when all its $B$-coordinates are nonnegative. The negative coordinate at $t-i$ therefore shows that $i\notin P_K$.

Similarly,
$-i=-t+(t-i)$
is the unique $B$-expansion of $-i$. Its coordinate at $t$ is $-1$, so $-i\notin P_K$.
\end{proof}

\section{An algebraically closed ultrapower of cardinality continuum}

\begin{lemma}
{\rm There exists a nonprincipal ultrafilter $\mathcal{U}$ on $\mathbb{N}$ containing every cofinite subset of $\mathbb{N}$.}
\end{lemma}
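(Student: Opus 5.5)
The plan is the standard Zorn's lemma argument applied to the Fréchet (cofinite) filter. First let $\mathcal{F}_0=\{A\subseteq\mathbb{N}:\mathbb{N}\setminus A\text{ is finite}\}$. We check that $\mathcal{F}_0$ is a proper filter:
\begin{itemize}
\item $\mathbb{N}\in\mathcal{F}_0$;
\item $\varnothing\notin\mathcal{F}_0$, because $\mathbb{N}$ is infinite;
\item $\mathcal{F}_0$ is closed under supersets, since a superset has a smaller complement;
\item $\mathcal{F}_0$ is closed under finite intersections, since $\mathbb{N}\setminus(A\cap A')=(\mathbb{N}\setminus A)\cup(\mathbb{N}\setminus A')$ is a union of two finite sets.
\end{itemize}

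Next consider the family $\mathscr{F}$ of all proper filters on $\mathbb{N}$ that contain $\mathcal{F}_0$, partially ordered by inclusion. It is nonempty because it contains $\mathcal{F}_0$. For a nonempty chain $\{\mathcal{F}_i\}_{i\in I}$, the union $\bigcup_i\mathcal{F}_i$ is again a proper filter containing $\mathcal{F}_0$. Any two members lie in a common $\mathcal{F}_i$, so their intersection lies there too. Supersets are handled inside a single $\mathcal{F}_i$, and $\varnothing$ belongs to no $\mathcal{F}_i$. Zorn's lemma therefore gives a maximal element $\mathcal{U}$.

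The main step is showing that $\mathcal{U}$ is an ultrafilter, i.e.\ that for each $A\subseteq\mathbb{N}$ exactly one of $A$ and $\mathbb{N}\setminus A$ lies in $\mathcal{U}$. \emph{At most one:} both cannot lie in $\mathcal{U}$, since their intersection is $\varnothing$. \emph{At least one:} suppose $\mathbb{N}\setminus A\notin\mathcal{U}$. Then every $U\in\mathcal{U}$ meets $A$, for otherwise $U\subseteq\mathbb{N}\setminus A$ would force $\mathbb{N}\setminus A\in\mathcal{U}$. Put
\[
\mathcal{U}'=\{X\subseteq\mathbb{N}: X\supseteq U\cap A\text{ for some }U\in\mathcal{U}\}.
\]
This is a filter because $\mathcal{U}$ is closed under intersections. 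It is proper because every $U\cap A\neq\varnothing$. It contains $\mathcal{U}$ and it contains $A$. Maximality then gives $\mathcal{U}'=\mathcal{U}$, hence $A\in\mathcal{U}$.

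Finally, nonprincipality. If a finite set $S$ belonged to $\mathcal{U}$, then $\mathbb{N}\setminus S$, which is cofinite, would also lie in $\mathcal{U}\supseteq\mathcal{F}_0$. Then $\varnothing=S\cap(\mathbb{N}\setminus S)\in\mathcal{U}$, a contradiction. So $\mathcal{U}$ contains no finite set and contains every cofinite set, as required.
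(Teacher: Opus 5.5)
Your proof is correct and follows essentially the same route as the paper's: Zorn's lemma applied to proper filters extending the cofinite filter, maximality forcing $A\in\mathcal{U}$ or $\mathbb{N}\setminus A\in\mathcal{U}$ via the filter generated by $\mathcal{U}$ and $A$, and nonprincipality because a finite set's cofinite complement already lies in $\mathcal{U}$. You are somewhat more explicit than the paper, writing out the generated filter $\mathcal{U}'$ and checking the ``at most one'' half, but the argument is the same.
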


\begin{proof}
The cofinite subsets form a proper filter. The union of a chain of proper filters containing this filter is again a proper filter. Zorn's lemma gives a maximal proper extension $\mathcal{U}$.

If $A\notin\mathcal{U}$ and every $F\in\mathcal{U}$ met $A$, adjoining $A$ would generate a proper larger filter. Thus some $F\in\mathcal{U}$ satisfies $F\cap A=\varnothing$, and upward closure gives
$$
\mathbb{N}\setminus A\in\mathcal{U}.
$$
Therefore $\mathcal{U}$ is an ultrafilter. It contains no finite set, because the complement of any finite set already belongs to $\mathcal{U}$.
\end{proof}

Fix such an ultrafilter. On $K^{\mathbb{N}}$ define
$$
(x_n)\sim(y_n)
\quad\Longleftrightarrow\quad
\{n:x_n=y_n\}\in\mathcal{U},
$$
and set
$$
E=K^{\mathbb{N}}/\mathcal{U}.
$$
An equivalence class is denoted by $[(x_n)]$.

\begin{proposition}
{\rm Coordinatewise addition and multiplication make $E$ an algebraically closed field of characteristic zero. The map
$$
j:K\longrightarrow E,
\qquad j(a)=[(a,a,\ldots)]
$$
is an injective field homomorphism.}
\end{proposition}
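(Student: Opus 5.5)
The plan is to verify everything directly on representatives, using only the ultrafilter axioms from Lemma 7.1 and the fact (Lemma 4.3) that $K$ is an algebraically closed field of characteristic zero. No general form of the ultraproduct preservation principle is needed.

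\emph{Well-definedness and field axioms.} First, $\sim$ is an equivalence relation. Reflexivity holds because $\mathbb{N}\in\mathcal{U}$, symmetry is trivial, and transitivity follows from $\{n:x_n=z_n\}\supseteq\{n:x_n=y_n\}\cap\{n:y_n=z_n\}$ together with closure of $\mathcal U$ under intersections and supersets. The same intersection argument shows that coordinatewise sums and products of equivalent sequences are equivalent. Every ring identity (associativity, distributivity, commutativity, neutral elements $[(0)]$ and $[(1)]$, additive inverses) holds at every coordinate, hence on a set in $\mathcal U$, so $E$ is a commutative ring. It is nontrivial: $[(0)]=[(1)]$ would require $\varnothing\in\mathcal U$.

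\emph{Inverses, characteristic, and $j$.} Suppose $[(x_n)]\neq 0$. Then $Z=\{n:x_n=0\}\notin\mathcal U$, so by the ultrafilter property $\mathbb{N}\setminus Z\in\mathcal U$. Define $y_n=x_n^{-1}$ off $Z$ and $y_n=0$ on $Z$. Then $x_ny_n=1$ on a set in $\mathcal U$, so $[(y_n)]$ is an inverse and $E$ is a field. For characteristic zero, $m\cdot 1_E=[(m,m,\ldots)]$ is nonzero for every positive integer $m$, because $m\neq 0$ in $K$ at every coordinate. The map $j$ is clearly additive, multiplicative and unital. It is injective because $j(a)=j(b)$ means $\{n:a=b\}\in\mathcal U$; this set is either $\mathbb{N}$ or $\varnothing$, and it cannot be $\varnothing$, so $a=b$.

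\emph{Algebraic closedness.} This is the only step needing care. It suffices to treat a monic polynomial $f(X)=X^d+\alpha_{d-1}X^{d-1}+\cdots+\alpha_0\in E[X]$ with $d\geq 1$, since a nonconstant polynomial can be divided by its leading coefficient. Pick representatives $\alpha_k=[(a_{k,n})_n]$. For each $n$, the polynomial $f_n(X)=X^d+a_{d-1,n}X^{d-1}+\cdots+a_{0,n}\in K[X]$ is nonconstant, so Lemma 4.3 provides a root $z_n\in K$. This uses countable choice, or an enumeration of $K$ so that $z_n$ can be taken to be the first root. Then $f([(z_n)])$ is represented by $(f_n(z_n))_n=(0)_n$. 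The evaluation of a polynomial with coefficients in $E$ at an element of $E$ is computed coordinatewise, because it involves only finitely many sums and products, each of which is compatible with representatives by the first step. Hence $[(z_n)]$ is a root.

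The main obstacle is this last coordinatewise-evaluation point. The difficulty is not that it fails, but that it must be justified: the root exists at every coordinate, and the representative-level computation must be shown to agree with the polynomial evaluation in $E$.
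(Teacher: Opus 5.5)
Your proof is correct and follows the paper's argument essentially step for step. It uses the same ingredients: intersections in $\mathcal U$ for well-definedness, the ultrafilter property on $\{n:x_n\neq 0\}$ for inverses, $\varnothing\notin\mathcal U$ for injectivity of $j$, and a coordinatewise choice of roots via an enumeration of $K$. The only cosmetic differences are that you first make $f$ monic, so every coordinate polynomial has degree $d$, whereas the paper keeps the leading coefficient and works on $A_d=\{n:a_{d,n}\neq 0\}\in\mathcal U$, and that you check characteristic zero directly rather than via $m1_E=j(m1_K)$.
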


\begin{proof}
Fix the ultrafilter $\mathcal{U}$ supplied by Lemma~7.1. Recall that $\mathbb{N}\in\mathcal{U}$, $\varnothing\notin\mathcal{U}$, and $\mathcal{U}$ is closed under finite intersections and passage to supersets. We also use the ultrafilter property that, for every $A\subseteq\mathbb{N}$, exactly one of $A$ and $\mathbb{N}\setminus A$ belongs to $\mathcal{U}$.

We first verify that the relation $\sim$ is an equivalence relation on $K^{\mathbb{N}}$. For every sequence $(x_n)$, the equality set $\{n\in\mathbb{N}:x_n=x_n\}$ is $\mathbb{N}$, which belongs to $\mathcal{U}$. Thus $(x_n)\sim(x_n)$, proving reflexivity. Symmetry follows because $\{n:x_n=y_n\}=\{n:y_n=x_n\}$.

To prove transitivity, suppose that $(x_n)\sim(y_n)$ and $(y_n)\sim(z_n)$. Then
$A=\{n:x_n=y_n\}\in\mathcal{U}$
and
$B=\{n:y_n=z_n\}\in\mathcal{U}$.
Closure under finite intersections gives $A\cap B\in\mathcal{U}$. For every $n\in A\cap B$, we have $x_n=z_n$, so
$A\cap B\subseteq\{n:x_n=z_n\}$.
By upward closure, $\{n:x_n=z_n\}\in\mathcal{U}$. Hence $(x_n)\sim(z_n)$, proving transitivity.

Consequently, the quotient set $E=K^{\mathbb{N}}/\mathcal{U}$ is well defined. We denote the equivalence class of $(x_n)$ by $[(x_n)]$.

Define addition and multiplication on $E$ by
$[(x_n)]+[(y_n)]=[(x_n+y_n)]$
and
$[(x_n)][(y_n)]=[(x_ny_n)]$.
We check that these definitions are independent of the representatives.

Suppose that $(x_n)\sim(x'_n)$ and $(y_n)\sim(y'_n)$. Then the sets
$A=\{n:x_n=x'_n\}$
and
$B=\{n:y_n=y'_n\}$
belong to $\mathcal{U}$. Therefore $A\cap B\in\mathcal{U}$. For every $n\in A\cap B$, we have both
$x_n+y_n=x'_n+y'_n$
and
$x_ny_n=x'_ny'_n$.
Thus the equality sets for the sum sequences and the product sequences both contain $A\cap B$. By upward closure, both equality sets belong to $\mathcal{U}$. This proves
$(x_n+y_n)\sim(x'_n+y'_n)$
and
$(x_ny_n)\sim(x'_ny'_n)$.
Hence addition and multiplication are well defined on $E$.

Similarly, additive inverses are given by
$-[(x_n)]=[(-x_n)]$.
Indeed, equivalent sequences have equivalent coordinatewise negatives.

Set
$0_E=[(0,0,\ldots)]$
and
$1_E=[(1,1,\ldots)]$.
The ring identities follow from the corresponding identities in $K$, applied at every coordinate. For example, if $X=[(x_n)]$, $Y=[(y_n)]$, and $Z=[(z_n)]$, then
$$(X+Y)+Z=[((x_n+y_n)+z_n)]=[(x_n+(y_n+z_n))]=X+(Y+Z).$$
Likewise,
$$X(Y+Z)=[(x_n(y_n+z_n))]=[(x_ny_n+x_nz_n)]=XY+XZ.$$
Associativity of multiplication and commutativity of both operations follow in the same way. Moreover,
$X+0_E=X$,
$X+(-X)=0_E$,
and
$X1_E=X$.
Thus $E$ is a commutative ring with identity.

The identity and zero elements are distinct. Since $0\neq 1$ in $K$, the equality set of the constant zero and constant one sequences is empty. As $\varnothing\notin\mathcal{U}$, these sequences are not equivalent. Therefore $0_E\neq 1_E$.

We now prove that every nonzero element of $E$ is invertible. Let $X=[(x_n)]\neq 0_E$. By the definition of the equivalence relation,
$X=0_E\iff\{n:x_n=0\}\in\mathcal{U}$.
Since $X\neq 0_E$, the set $\{n:x_n=0\}$ does not belong to $\mathcal{U}$. The ultrafilter property therefore implies that its complement
$A=\{n:x_n\neq 0\}$
belongs to $\mathcal{U}$.

Define a sequence $(y_n)$ by
$y_n=\begin{cases}x_n^{-1},&n\in A,\\0,&n\notin A.\end{cases}$
The inverse $x_n^{-1}$ exists whenever $n\in A$, because $K$ is a field. Put $Y=[(y_n)]$. For every $n\in A$, we have $x_ny_n=1$. Hence
$A\subseteq\{n:x_ny_n=1\}$.
By upward closure, the latter set belongs to $\mathcal{U}$. Thus $(x_ny_n)\sim(1,1,\ldots)$, and consequently
$XY=[(x_ny_n)]=1_E$.
Since multiplication is commutative, we also have $YX=1_E$. Therefore every nonzero element of $E$ has a multiplicative inverse, and $E$ is a field.

Consider the map
$j:K\to E,\qquad j(a)=[(a,a,\ldots)]$.
Coordinatewise addition and multiplication give
$j(a+b)=j(a)+j(b)$
and
$j(ab)=j(a)j(b)$
for every $a,b\in K$. Also, $j(0)=0_E$ and $j(1)=1_E$, so $j$ is a unital ring homomorphism.

To prove injectivity, let $a,b\in K$. If $a=b$, the equality set of their constant sequences is $\mathbb{N}$, so $j(a)=j(b)$. If $a\neq b$, that equality set is empty, so $j(a)\neq j(b)$. Hence
$j(a)=j(b)\iff a=b$.
Thus $j$ is injective. Since $K$ and $E$ are fields, it follows that $j$ is a field embedding. Explicitly, for $a\neq 0$, the identity
$j(a)j(a^{-1})=j(1)=1_E$
shows that $j(a^{-1})=j(a)^{-1}$.

The field $E$ has characteristic zero. Indeed, for every positive integer $m$, we have
$m1_E=j(m1_K)$.
Since $K$ has characteristic zero, $m1_K\neq 0$. Injectivity of $j$ therefore gives $m1_E\neq 0$. Thus no positive integer annihilates $1_E$, which proves $\operatorname{char}(E)=0$.

It remains to prove that $E$ is algebraically closed. Let
$$f(T)=a_dT^d+a_{d-1}T^{d-1}+\cdots+a_0\in E[T]$$
be a nonconstant polynomial, where $d\geq 1$ and $a_d\neq 0_E$.

For each $r\in\{0,\ldots,d\}$, choose a representative sequence $(a_{r,n})_{n\in\mathbb{N}}$ such that $a_r=[(a_{r,n})]$. Since the leading coefficient $a_d$ is nonzero, the same argument used above for inverses gives
$A_d=\{n\in\mathbb{N}:a_{d,n}\neq 0\}\in\mathcal{U}$.

For every $n\in\mathbb{N}$, define the coordinate polynomial
$$f_n(T)=a_{d,n}T^d+a_{d-1,n}T^{d-1}+\cdots+a_{0,n}\in K[T].$$
If $n\in A_d$, then $a_{d,n}\neq 0$, so $f_n$ has degree exactly $d$. In particular, it is nonconstant. Since $K$ is algebraically closed, $f_n$ has a root in $K$.

For each $n\in A_d$, choose $r_n\in K$ such that $f_n(r_n)=0$. These choices can be made explicitly using the countability of $K$: fix an enumeration $K=\{c_1,c_2,\ldots\}$ and choose $r_n$ to be the first element of this enumeration that is a root of $f_n$. For $n\notin A_d$, put $r_n=0$. This defines a sequence $(r_n)\in K^{\mathbb{N}}$. Set $R=[(r_n)]\in E$.

Because addition and multiplication in $E$ are defined coordinatewise, polynomial evaluation satisfies
$f(R)=[(f_n(r_n))]$.
For every $n\in A_d$, the choice of $r_n$ gives $f_n(r_n)=0$. Therefore
$A_d\subseteq\{n:f_n(r_n)=0\}$.
Since $A_d\in\mathcal{U}$ and $\mathcal{U}$ is upward closed, the zero set $\{n:f_n(r_n)=0\}$ belongs to $\mathcal{U}$. By the definition of the quotient relation, this means
$[(f_n(r_n))]=0_E$.
Consequently, $f(R)=0_E$.

We have proved that every nonconstant polynomial over $E$ has a root in $E$. Since $E$ is a field, the factor theorem then allows us to remove a linear factor and repeat the argument. Induction on the degree shows that every nonconstant polynomial over $E$ splits into linear factors over $E$. Hence $E$ is algebraically closed.
\end{proof}

\begin{proposition}
{\rm Define
$$
[(x_n)]\leq_E[(y_n)]
\quad\Longleftrightarrow\quad
\{n:x_n\leq_Ky_n\}\in\mathcal{U}.
$$
This is a compatible lattice order on $E$, with
$$
\begin{aligned}
[(x_n)]\vee_E[(y_n)]&=[(x_n\vee_Ky_n)],\\
[(x_n)]\wedge_E[(y_n)]&=[(x_n\wedge_Ky_n)].
\end{aligned}
$$
The diagonal embedding $j$ preserves and reflects order, and preserves joins and meets.}
\end{proposition}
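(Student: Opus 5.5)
The plan is to verify everything on representatives, using only the ultrafilter axioms together with Proposition 6.1. The latter gives that $\leq_K$ is a compatible lattice order on $K$ with positive cone $P_K$. First I will check that $\leq_E$ is well defined. Suppose $(x_n)\sim(x'_n)$ and $(y_n)\sim(y'_n)$. Then the set where $x_n=x'_n$ and $y_n=y'_n$ lies in $\mathcal U$. Intersecting it with $\{n:x_n\leq_K y_n\}$ gives a subset of $\{n:x'_n\leq_K y'_n\}$, so upward closure settles the matter.

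Next come the partial-order axioms and compatibility. Reflexivity holds because the relevant set is $\mathbb N$. Transitivity follows by intersecting the two index sets. For antisymmetry, the intersection of $\{x_n\leq_K y_n\}$ and $\{y_n\leq_K x_n\}$ lies in $\mathcal U$ and is contained in $\{x_n=y_n\}$, since $\leq_K$ is antisymmetric; hence the classes are equal.

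It is convenient to set $P_E=\{[(x_n)]:\{n:x_n\in P_K\}\in\mathcal U\}$ and note that $X\leq_E Y$ if and only if $Y-X\in P_E$. This holds because $x_n\leq_K y_n$ if and only if $y_n-x_n\in P_K$, and subtraction is coordinatewise. Closure of $P_E$ under sums and products then follows from $P_K+P_K\subseteq P_K$ and $P_KP_K\subseteq P_K$, again by intersecting two sets in $\mathcal U$. From this, translation invariance and the multiplicative compatibility follow exactly as in Lemma 3.1.

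For the lattice formulas, set $U=[(x_n\vee_K y_n)]$. Any change of representatives alters $U$ only off a set in $\mathcal U$, so $U$ is well defined. It is an upper bound of $X$ and $Y$, since $x_n\leq_K x_n\vee_K y_n$ holds for every $n$. Now let $V=[(v_n)]$ be another upper bound. Then $\{n:x_n\leq_K v_n\}\cap\{n:y_n\leq_K v_n\}\in\mathcal U$, and on this set $x_n\vee_K y_n\leq_K v_n$ by the leastness of joins in $K$. Hence $U\leq_E V$. The meet is handled symmetrically, or via $X\wedge_E Y=-((-X)\vee_E(-Y))$ together with the same identity in $K$.

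Finally, for the diagonal map $j$: we have $j(a)\leq_E j(b)$ if and only if $\{n:a\leq_K b\}\in\mathcal U$. That set is either $\mathbb N$ or $\varnothing$, and $\varnothing\notin\mathcal U$. This gives preservation and reflection of order simultaneously. The identities $j(a\vee_K b)=j(a)\vee_E j(b)$ and $j(a\wedge_K b)=j(a)\wedge_E j(b)$ are immediate from the coordinatewise formulas.

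No step is a genuine obstacle. The only point needing care is the leastness of the join, since an arbitrary upper bound $V$ satisfies its two inequalities only on sets in $\mathcal U$ rather than everywhere, so one has to intersect them first. Well-definedness of all the operations must also be checked before they are used. Throughout, the argument uses only finite intersections and upward closure of $\mathcal U$. The ultrafilter dichotomy is not needed, except implicitly through $\varnothing\notin\mathcal U$ in the reflection step.
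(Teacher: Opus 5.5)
Your proposal is correct and follows essentially the same route as the paper: you check well-definedness on representatives, obtain the order axioms and compatibility by intersecting index sets in $\mathcal U$, prove leastness of the coordinatewise join after intersecting the two comparison sets, and use the $\mathbb N$-or-$\varnothing$ dichotomy for constant sequences to handle $j$. The only cosmetic difference is that you derive translation invariance and multiplicative compatibility from the cone $P_E$ (which the paper introduces only later, in the proof of Theorem 9.1), whereas the paper verifies them directly on comparison sets.
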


\begin{proof}
We use the field structure on $E=K^{\mathbb{N}}/\mathcal{U}$ established in Proposition~7.2 and the compatible lattice order $\leq_K$ on $K$ established in Proposition~6.1. Write $0_E=[(0,0,\ldots)]$ and $1_E=[(1,1,\ldots)]$.

We first record a consequence of the filter properties of $\mathcal{U}$. Suppose that $C,D\subseteq\mathbb{N}$ agree on a set $H\in\mathcal{U}$, meaning that $C\cap H=D\cap H$. If $C\in\mathcal{U}$, then $C\cap H\in\mathcal{U}$ by closure under finite intersections. Since $C\cap H=D\cap H\subseteq D$, upward closure gives $D\in\mathcal{U}$. Interchanging $C$ and $D$ proves the converse. Therefore
$C\cap H=D\cap H,\ H\in\mathcal{U}\implies(C\in\mathcal{U}\iff D\in\mathcal{U})$.

We now verify that the proposed order is independent of representatives. Suppose that $(x_n)\sim(x'_n)$ and $(y_n)\sim(y'_n)$. Then
$H_x=\{n:x_n=x'_n\}\in\mathcal{U}$
and
$H_y=\{n:y_n=y'_n\}\in\mathcal{U}$.
Consequently, $H=H_x\cap H_y\in\mathcal{U}$. For every $n\in H$, we have
$x_n\leq_K y_n\iff x'_n\leq_K y'_n$.
Thus the sets
$C=\{n:x_n\leq_K y_n\}$
and
$D=\{n:x'_n\leq_K y'_n\}$
agree on $H$. By the observation above, $C\in\mathcal{U}$ if and only if $D\in\mathcal{U}$. Hence
$[(x_n)]\leq_E[(y_n)]\iff\{n:x_n\leq_K y_n\}\in\mathcal{U}$
defines a relation on equivalence classes.

The coordinatewise candidates for joins and meets are also independent of representatives. Indeed, for every $n\in H$, equality of the inputs gives
$x_n\vee_K y_n=x'_n\vee_K y'_n$
and
$x_n\wedge_K y_n=x'_n\wedge_K y'_n$.
The equality sets of these sequences therefore contain $H$ and belong to $\mathcal{U}$. Consequently,
$[(x_n\vee_K y_n)]=[(x'_n\vee_K y'_n)]$
and
$[(x_n\wedge_K y_n)]=[(x'_n\wedge_K y'_n)]$.

We next prove that $\leq_E$ is a partial order. Let $X=[(x_n)]$, $Y=[(y_n)]$, and $Z=[(z_n)]$.

For every $n$, reflexivity in $K$ gives $x_n\leq_K x_n$. Hence
$\{n:x_n\leq_K x_n\}=\mathbb{N}\in\mathcal{U}$,
so $X\leq_E X$.

Suppose that $X\leq_E Y$ and $Y\leq_E X$. The sets
$A=\{n:x_n\leq_K y_n\}$
and
$B=\{n:y_n\leq_K x_n\}$
belong to $\mathcal{U}$, and therefore $A\cap B\in\mathcal{U}$. For every $n\in A\cap B$, antisymmetry of $\leq_K$ gives $x_n=y_n$. Thus
$A\cap B\subseteq\{n:x_n=y_n\}$.
Upward closure implies $\{n:x_n=y_n\}\in\mathcal{U}$, so $(x_n)\sim(y_n)$ and $X=Y$. This proves antisymmetry.

Suppose that $X\leq_E Y$ and $Y\leq_E Z$. Then
$A=\{n:x_n\leq_K y_n\}\in\mathcal{U}$
and
$B=\{n:y_n\leq_K z_n\}\in\mathcal{U}$.
For every $n\in A\cap B$, transitivity in $K$ yields $x_n\leq_K z_n$. Hence
$A\cap B\subseteq\{n:x_n\leq_K z_n\}$.
Since $A\cap B\in\mathcal{U}$, the set on the right belongs to $\mathcal{U}$. Therefore $X\leq_E Z$, proving transitivity. Thus $\leq_E$ is a partial order.

We verify compatibility with addition. Let $A=[(a_n)]\in E$. Translation invariance of $\leq_K$ gives, for every $n$,
$x_n\leq_K y_n\iff x_n+a_n\leq_K y_n+a_n$.
Thus the two comparison sets are equal. Using the coordinatewise addition on $E$, we obtain
$$X\leq_E Y\iff X+A\leq_E Y+A.$$
In particular, translations preserve the order.

We now verify compatibility with multiplication. Suppose that $0_E\leq_E X$ and $0_E\leq_E Y$. Then
$A=\{n:0\leq_K x_n\}\in\mathcal{U}$
and
$B=\{n:0\leq_K y_n\}\in\mathcal{U}$.
Their intersection belongs to $\mathcal{U}$. For every $n\in A\cap B$, compatibility of the order on $K$ with multiplication gives $0\leq_K x_ny_n$. Therefore
$A\cap B\subseteq\{n:0\leq_K x_ny_n\}$.
By upward closure, the latter set belongs to $\mathcal{U}$, proving $0_E\leq_E XY$.

More generally, suppose that $X\leq_E Y$ and $0_E\leq_E A$. Translation invariance implies $0_E\leq_E Y-X$. The product property just proved gives
$$0_E\leq_E A(Y-X)=AY-AX.$$
Hence $AX\leq_E AY$. Since $E$ is commutative, multiplication on the right by a nonnegative element also preserves order.

Moreover, $0\leq_K 1$ at every coordinate, so
$\{n:0\leq_K 1\}=\mathbb{N}\in\mathcal{U}$.
Thus $0_E\leq_E 1_E$. Proposition~7.2 gives $0_E\neq 1_E$, and consequently $0_E<_E1_E$.

We next prove that every pair of elements has a join and a meet. Fix $X=[(x_n)]$ and $Y=[(y_n)]$, and set
$J=[(x_n\vee_K y_n)]$.
We have already proved that this class is independent of the chosen representatives.

For every $n$, the defining property of the join in $K$ gives
$x_n\leq_K x_n\vee_K y_n$
and
$y_n\leq_K x_n\vee_K y_n$.
Both comparison sets are therefore equal to $\mathbb{N}$. Hence $X\leq_E J$ and $Y\leq_E J$, so $J$ is an upper bound of $X$ and $Y$.

Let $Z=[(z_n)]$ be any other upper bound of $X$ and $Y$. Then
$A=\{n:x_n\leq_K z_n\}\in\mathcal{U}$
and
$B=\{n:y_n\leq_K z_n\}\in\mathcal{U}$.
For every $n\in A\cap B$, the element $z_n$ is an upper bound of $x_n$ and $y_n$ in $K$. Since $x_n\vee_K y_n$ is their least upper bound, we have
$x_n\vee_K y_n\leq_K z_n$.
Thus
$$A\cap B\subseteq\{n:x_n\vee_K y_n\leq_K z_n\}.$$
The comparison set on the right belongs to $\mathcal{U}$, so $J\leq_E Z$. Therefore $J$ is the least upper bound of $X$ and $Y$, proving
$X\vee_E Y=[(x_n\vee_K y_n)]$.

Similarly, set
$M=[(x_n\wedge_K y_n)]$.
For every $n$, we have
$x_n\wedge_K y_n\leq_K x_n$
and
$x_n\wedge_K y_n\leq_K y_n$.
It follows that $M\leq_E X$ and $M\leq_E Y$, so $M$ is a lower bound.

If $Z=[(z_n)]$ is any other lower bound, then
$A=\{n:z_n\leq_K x_n\}\in\mathcal{U}$
and
$B=\{n:z_n\leq_K y_n\}\in\mathcal{U}$.
For every $n\in A\cap B$, the greatest-lower-bound property in $K$ yields
$z_n\leq_K x_n\wedge_K y_n$.
Hence
$A\cap B\subseteq\{n:z_n\leq_K x_n\wedge_K y_n\}$.
The latter set belongs to $\mathcal{U}$, and therefore $Z\leq_E M$. Thus $M$ is the greatest lower bound of $X$ and $Y$, proving
$X\wedge_E Y=[(x_n\wedge_K y_n)]$.

We have therefore obtained a lattice order on $E$ that is compatible with its addition and multiplication. In particular, the underlying ring of the field $E$ is a lattice-ordered ring.

Finally, consider the diagonal field embedding $j:K\to E$ from Proposition~7.2, defined by
$j(a)=[(a,a,\ldots)]$.
For $a,b\in K$, the comparison set of the two constant sequences is $\mathbb{N}$ if $a\leq_K b$, and it is empty if $a\not\leq_K b$. Since $\mathbb{N}\in\mathcal{U}$ and $\varnothing\notin\mathcal{U}$, we obtain
$j(a)\leq_E j(b)\iff a\leq_K b$.
Thus $j$ both preserves and reflects order.

Applying the coordinate formula for joins to constant sequences gives
$$j(a)\vee_E j(b)=[(a\vee_K b,a\vee_K b,\ldots)]=j(a\vee_K b).$$
Likewise, the coordinate formula for meets gives
$$j(a)\wedge_E j(b)=[(a\wedge_K b,a\wedge_K b,\ldots)]=j(a\wedge_K b).$$
Hence $j$ preserves joins and meets.

\end{proof}

\begin{proposition}
{\rm The field $E$ has cardinality
$$
|E|=\mathfrak{c}=2^{\aleph_0}.
$$}
\end{proposition}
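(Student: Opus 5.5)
The plan is to prove the two inequalities $|E|\leq\mathfrak c$ and $|E|\geq\mathfrak c$ separately, and then conclude by Cantor--Schröder--Bernstein.

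For the upper bound, we use that $E$ is a quotient of $K^{\mathbb{N}}$. The quotient map $K^{\mathbb{N}}\to E$ is surjective, so $|E|\leq|K^{\mathbb{N}}|$. By Lemma~4.3, $|K|=\aleph_0$, and therefore
$$|K^{\mathbb{N}}|=\aleph_0^{\aleph_0}=2^{\aleph_0}=\mathfrak c.$$
Choosing a representative in each class gives an injection $E\to K^{\mathbb{N}}$. Since $K$ is countable, this choice can be made explicit, although the bound follows from surjectivity alone.

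For the lower bound, we construct an injection $\{0,1\}^{\mathbb{N}}\to E$ by coding binary sequences through their finite initial segments. The set $S$ of finite binary strings is countably infinite. Since $K$ is countably infinite, fix an injection $c:S\to K$; for instance, we may take $c(s)$ to be a distinct element of $\mathbb{N}\subseteq K$. For $\sigma\in\{0,1\}^{\mathbb{N}}$, define
$$\Psi(\sigma)=[(c(\sigma|_n))_{n\in\mathbb{N}}],$$
where $\sigma|_n$ denotes the initial segment of length $n$.

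Suppose $\sigma\neq\tau$, and let $N$ be the first index where they differ. For every $n>N$ the strings $\sigma|_n$ and $\tau|_n$ differ, so injectivity of $c$ gives $c(\sigma|_n)\neq c(\tau|_n)$. Hence the equality set $\{n:c(\sigma|_n)=c(\tau|_n)\}$ is contained in the finite set $\{0,\ldots,N\}$. Since $\mathcal U$ is nonprincipal, it contains no finite set (Lemma~7.1), and by upward closure it contains no subset of a finite set. Therefore the two sequences are not equivalent, $\Psi(\sigma)\neq\Psi(\tau)$, and $\Psi$ is injective. This gives $\mathfrak c\leq|E|$.

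The only delicate point is the lower bound. Injectivity of $\Psi$ depends on two sequences eventually disagreeing, not merely disagreeing somewhere. This is exactly why we code by initial segments rather than by the digits $\sigma(n)$ themselves, and why nonprincipality of $\mathcal U$ is needed. Everything else is routine cardinal arithmetic.
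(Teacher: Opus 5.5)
Your proposal is correct and follows essentially the same route as the paper. The upper bound comes from the surjective quotient map $K^{\mathbb{N}}\to E$ together with $|K^{\mathbb{N}}|=2^{\aleph_0}$. The lower bound codes each binary sequence by its initial segments, so that distinct sequences give representatives agreeing only on a finite set, which the nonprincipal $\mathcal{U}$ cannot contain. The only differences are cosmetic: the paper uses the explicit integer code $a_n(\varepsilon)=\sum_{j=1}^{n}\varepsilon_j2^{j-1}\in\mathbb{Z}\subseteq K$ instead of your abstract injection $c$, and it proves $|K^{\mathbb{N}}|\le 2^{\aleph_0}$ by embedding sequences as graphs in $\mathcal{P}(\mathbb{N}\times\mathbb{N})$ rather than quoting $\aleph_0^{\aleph_0}=2^{\aleph_0}$.
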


\begin{proof}
We prove the two cardinal inequalities separately.

Since $K$ is countable, choose an injective map $\iota:K\to\mathbb{N}$. To each sequence $x=(x_n)_{n\geq 1}\in K^{\mathbb{N}}$, associate the set
$G_x=\{(n,\iota(x_n)):n\in\mathbb{N}\}\subseteq\mathbb{N}\times\mathbb{N}$.
The assignment $x\mapsto G_x$ is injective. Indeed, if $G_x=G_y$, then for every $n$ the unique pair in this set having first coordinate $n$ has second coordinate both $\iota(x_n)$ and $\iota(y_n)$. Thus $\iota(x_n)=\iota(y_n)$, and injectivity of $\iota$ gives $x_n=y_n$ for every $n$.

Consequently,
$|K^{\mathbb{N}}|\leq|\mathcal{P}(\mathbb{N}\times\mathbb{N})|=2^{\aleph_0}$,
where the equality follows from the countability of $\mathbb{N}\times\mathbb{N}$. Since the quotient map $K^{\mathbb{N}}\to E$ is surjective, we obtain
$|E|\leq|K^{\mathbb{N}}|\leq 2^{\aleph_0}$.

For the reverse inequality, we construct an injection from the set of binary sequences into $E$. First, recall that no finite subset of $\mathbb{N}$ belongs to $\mathcal{U}$. Indeed, if $F\subseteq\mathbb{N}$ is finite, then its complement $\mathbb{N}\setminus F$ is cofinite and hence belongs to $\mathcal{U}$. If $F$ also belonged to $\mathcal{U}$, closure under finite intersections would imply
$\varnothing=F\cap(\mathbb{N}\setminus F)\in\mathcal{U}$,
contradicting the fact that $\mathcal{U}$ is a proper filter.

Because $K$ has characteristic zero, the canonical map $\mathbb{Z}\to K$ is injective. We therefore identify integers with their images in $K$. In particular, a nonzero integer remains nonzero when regarded as an element of $K$.

Let $$\varepsilon=(\varepsilon_j)_{j\geq 1}\in\{0,1\}^{\mathbb{N}}.$$ For every $n\geq 1$, define
$$a_n(\varepsilon)=\sum_{j=1}^{n}\varepsilon_j2^{j-1}\in\mathbb{Z}\subseteq K.$$
Thus $a_n(\varepsilon)$ encodes the first $n$ entries of $\varepsilon$ as an integer. Define
$\Phi:\{0,1\}^{\mathbb{N}}\to E$
by
$\Phi(\varepsilon)=[(a_n(\varepsilon))_{n\geq 1}]$.

We claim that $\Phi$ is injective. Let $\varepsilon,\eta\in\{0,1\}^{\mathbb{N}}$ be distinct, and let
$$r=\min\{j\geq 1:\varepsilon_j\neq\eta_j\}.$$
Such an index exists because the sequences are distinct and $\mathbb{N}$ is well ordered.

For every $n<r$, the first $n$ entries of the two sequences agree. Hence $a_n(\varepsilon)=a_n(\eta)$.

Now fix $n\geq r$. Computing in $\mathbb{Z}$, we have
$$a_n(\varepsilon)-a_n(\eta)=(\varepsilon_r-\eta_r)2^{r-1}+\sum_{j=r+1}^{n}(\varepsilon_j-\eta_j)2^{j-1}.$$
Every term in the final sum is divisible by $2^r$. More explicitly, put
$M_n=\sum_{j=r+1}^{n}(\varepsilon_j-\eta_j)2^{j-r-1}\in\mathbb{Z}$,
with the empty sum interpreted as zero when $n=r$. Then
$a_n(\varepsilon)-a_n(\eta)=2^{r-1}\bigl((\varepsilon_r-\eta_r)+2M_n\bigr)$.

Since $\varepsilon_r$ and $\eta_r$ are distinct binary digits, $\varepsilon_r-\eta_r$ is either $1$ or $-1$. Thus $(\varepsilon_r-\eta_r)+2M_n$ is an odd integer and cannot be zero. Therefore $a_n(\varepsilon)-a_n(\eta)$ is a nonzero integer. By the characteristic-zero embedding $\mathbb{Z}\hookrightarrow K$, it is also nonzero in $K$. Consequently,
$a_n(\varepsilon)\neq a_n(\eta)$
for every $n\geq r$.

It follows that the equality set of the two representing sequences is exactly
$\{n\in\mathbb{N}:a_n(\varepsilon)=a_n(\eta)\}=\{1,\ldots,r-1\}$,
where the right-hand side is empty if $r=1$. This set is finite, so it does not belong to $\mathcal{U}$. By the definition of the equivalence relation used to form $E$, the sequences $(a_n(\varepsilon))$ and $(a_n(\eta))$ are therefore not equivalent. Hence $\Phi(\varepsilon)\neq\Phi(\eta)$, proving injectivity.

Finally, the set $\{0,1\}^{\mathbb{N}}$ has cardinality $2^{\aleph_0}$. Indeed, the assignment
$\varepsilon\mapsto\{j\in\mathbb{N}:\varepsilon_j=1\}$
is a bijection from $\{0,1\}^{\mathbb{N}}$ onto $\mathcal{P}(\mathbb{N})$. The injection $\Phi$ consequently gives
$2^{\aleph_0}\leq|E|$.

Combining this with the upper bound and applying the Cantor--Schr\"oder--Bernstein theorem, we conclude that
$|E|=2^{\aleph_0}=\mathfrak{c}$.
\end{proof}

\section{A field isomorphism fixing the countable subfield}

\begin{lemma}
{\rm Let $L$ be an uncountable extension of a countable field $k_0$, with $|L|=\kappa$. Then
$$
\operatorname{trdeg}_{k_0}L=\kappa.
$$}
\end{lemma}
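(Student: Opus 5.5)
The plan is to prove the two inequalities $\operatorname{trdeg}_{k_0}L\leq\kappa$ and $\kappa\leq\operatorname{trdeg}_{k_0}L$ separately, using a transcendence basis $T$ of $L$ over $k_0$. Such a basis exists by Zorn's lemma applied to algebraically independent subsets ordered by inclusion. The union of a chain of independent sets is independent, because any polynomial relation involves only finitely many elements, and these all lie in a single member of the chain. By maximality, $L$ is algebraic over $k_0(T)$, as recalled in the preliminaries.

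The upper bound is immediate: $T\subseteq L$, so $|T|\leq\kappa$.

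For the lower bound, I bound $|L|$ in terms of $|T|$ by a counting argument.
\begin{enumerate}
\item If $T$ is finite, then $k_0(T)$ is countable, since it is a field of rational functions in finitely many variables over a countable field. If $T$ is infinite, then $|k_0(T)|\leq|T|$: each element is a quotient of two polynomials, each determined by a finite subset of $T$ together with finitely many coefficients from $k_0$, and the set of finite subsets of an infinite set $T$ has cardinality $|T|$. In both cases $|k_0(T)|\leq\max\{\aleph_0,|T|\}$.
\item Since $L$ is algebraic over $k_0(T)$, every element of $L$ is a root of some nonzero polynomial in $k_0(T)[X]$. There are at most $\max\{\aleph_0,|T|\}$ such polynomials, and each has finitely many roots in $L$. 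Assigning to each element of $L$ its minimal polynomial together with an index among that polynomial's roots gives $|L|\leq\max\{\aleph_0,|T|\}$.
\end{enumerate}

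Since $L$ is uncountable, $\kappa>\aleph_0$, so the maximum must equal $|T|$. Hence $\kappa\leq|T|$, and therefore $|T|=\kappa$. Transcendence degree does not depend on the choice of basis, so we may identify $\operatorname{trdeg}_{k_0}L$ with $|T|$.

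The only delicate step is the cardinal arithmetic in the first item, namely that the finite subsets of an infinite set $T$, and the finite sequences from $T$, have cardinality $|T|$. This rests on the standard identity $|T\times T|=|T|$ for infinite $T$, which I will assume as known (it follows from the axiom of choice). The remaining steps are routine.
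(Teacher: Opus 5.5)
Your proposal is correct and follows essentially the same route as the paper. Both arguments take a maximal algebraically independent set $T$ via Zorn's lemma, bound $|k_0(T)|$ and then $|k_0(T)[X]|$ by $\max\{\aleph_0,|T|\}$, use finiteness of root sets to get $\kappa\leq\max\{\aleph_0,|T|\}$, and invoke uncountability to force $|T|=\kappa$. The only difference is that the paper proves in detail that $L$ is algebraic over $k_0(T)$ for maximal $T$, whereas you cite this fact from the preliminaries.
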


\begin{proof}
Let $L/k_0$ be the given field extension, where $k_0$ is countable and $|L|=\kappa>\aleph_0$. We first choose a transcendence basis and then estimate the cardinality of the resulting algebraic extension.

Consider the family $\mathcal{A}$ of all subsets of $L$ that are algebraically independent over $k_0$, ordered by inclusion. This family is nonempty because it contains the empty set. Let $\mathcal{C}$ be a chain in $\mathcal{A}$, and put $S=\bigcup_{A\in\mathcal{C}}A$.

The set $S$ is algebraically independent over $k_0$. Indeed, any polynomial relation among elements of $S$ involves only finitely many of them. Since $\mathcal{C}$ is a chain, those finitely many elements all belong to a single member of $\mathcal{C}$. Algebraic independence of that member excludes every nonzero polynomial relation among them. Thus $S\in\mathcal{A}$, and $S$ is an upper bound for the chain.

By Zorn's lemma, there exists a maximal algebraically independent subset $T\subseteq L$ over $k_0$. Set $F=k_0(T)$, the subfield of $L$ generated by $k_0$ and $T$.

We claim that $L$ is algebraic over $F$. Suppose otherwise. Then there exists $a\in L$ that is transcendental over $F$. We show that $T\cup\{a\}$ is algebraically independent over $k_0$, contradicting the maximality of $T$.

Suppose that a nonzero polynomial relation among finitely many distinct elements $t_1,\ldots,t_s\in T$ and $a$ were to hold. Such a relation has the form
$P(t_1,\ldots,t_s,a)=0$,
where $P(Y_1,\ldots,Y_s,Z)\in k_0[Y_1,\ldots,Y_s,Z]$ is nonzero. Write
$$P(Y_1,\ldots,Y_s,Z)=\sum_{j=0}^{d}P_j(Y_1,\ldots,Y_s)Z^j.$$
At least one coefficient polynomial $P_j$ is nonzero. Since $T$ is algebraically independent over $k_0$, every such nonzero coefficient polynomial satisfies
$P_j(t_1,\ldots,t_s)\neq 0$.
Therefore
$Q(Z)=\sum_{j=0}^{d}P_j(t_1,\ldots,t_s)Z^j$
is a nonzero polynomial in $F[Z]$. But the assumed relation gives $Q(a)=0$, contradicting the transcendence of $a$ over $F$. Hence $T\cup\{a\}$ is algebraically independent, and the claim follows.

Thus $T$ is a transcendence basis of $L$ over $k_0$, so
$\operatorname{trdeg}_{k_0}L=|T|$.
It remains to prove that $|T|=\kappa$.

Put $\mu=\max\{\aleph_0,|T|\}$. This is an infinite cardinal, regardless of whether $T$ is finite or infinite. We first prove that $|F|\leq\mu$.

Let $k_0[T]$ denote the subring of $L$ generated by $k_0$ and $T$. Every element of this ring is a polynomial expression involving only finitely many elements of $T$. Consequently,
$k_0[T]=\bigcup\{k_0[S]:S\subseteq T\text{ is finite}\}$.

The family of finite subsets of $T$ has cardinality at most $\mu$: each finite subset can be listed by a finite tuple of elements of $T$, and the set of all such finite tuples has cardinality at most $\mu$.

For each finite subset $S=\{s_1,\ldots,s_r\}\subseteq T$, the ring $k_0[S]$ is at most countable. Indeed, each of its elements is represented by a polynomial in finitely many variables over the countable field $k_0$. Such a polynomial is specified by a finite list of coefficients from $k_0$ and exponent vectors from $\mathbb{N}_0^r$. Both sets are at most countable, so there are at most countably many such finite lists. Therefore $|k_0[S]|\leq\aleph_0$.

Taking the union over all finite subsets $S\subseteq T$, we obtain
$|k_0[T]|\leq\mu\cdot\aleph_0=\mu$.

The ring $k_0[T]$ is an integral domain because it is a subring of the field $L$, and its fraction field is $F=k_0(T)$. Thus every element of $F$ can be written as $p/q$, where $p,q\in k_0[T]$ and $q\neq 0$. The number of such pairs is at most $\mu^2=\mu$, so
$|F|\leq\mu$.

We next count the polynomials over $F$. For every fixed $d\in\mathbb{N}_0$, a polynomial of degree at most $d$ is determined by its $d+1$ coefficients. Hence the set of such polynomials has cardinality at most
$|F|^{d+1}\leq\mu$.
Every polynomial has finite degree, so taking a countable union gives
$|F[X]|\leq\aleph_0\cdot\mu=\mu$.

For each nonzero polynomial $f\in F[X]$, let
$Z_L(f)=\{a\in L:f(a)=0\}$.
Since $L$ is a field, a nonzero polynomial of degree $d$ has at most $d$ distinct roots in $L$. In particular, every set $Z_L(f)$ is finite. A nonzero constant polynomial has no roots, which causes no exception to this statement.

Because $L$ is algebraic over $F$, every element of $L$ belongs to $Z_L(f)$ for some nonzero polynomial $f\in F[X]$. Therefore
$L=\bigcup_{f\in F[X]\setminus\{0\}}Z_L(f)$.
There are at most $\mu$ polynomials in this union, and each root set is finite. Consequently,
$\kappa=|L|\leq\mu\cdot\aleph_0=\mu=\max\{\aleph_0,|T|\}$.

On the other hand, $T\subseteq L$ gives $|T|\leq\kappa$. If $|T|\leq\aleph_0$, the preceding estimate would imply $\kappa\leq\aleph_0$, contrary to the assumption that $L$ is uncountable. Thus $|T|>\aleph_0$, and hence $\mu=|T|$. We obtain
$\kappa\leq|T|\leq\kappa$,
so $|T|=\kappa$.

Since $T$ is a transcendence basis, it follows that
$\operatorname{trdeg}_{k_0}L=|T|=\kappa$.
\end{proof}

\begin{lemma}
{\rm Let $L/D_0$ be an algebraic field extension, let $\Omega$ be algebraically closed, and let $\sigma_0:D_0\to\Omega$ be a field embedding. Then $\sigma_0$ extends to a field embedding $L\to\Omega$.}
\end{lemma}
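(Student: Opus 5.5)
The plan is the standard Zorn's lemma argument over partial extensions, with the one-step extension to a simple algebraic extension as the key ingredient.

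Consider the set $\mathcal{S}$ of all pairs $(D,\sigma)$, where $D$ is an intermediate field $D_0\subseteq D\subseteq L$ and $\sigma:D\to\Omega$ is a field embedding with $\sigma|_{D_0}=\sigma_0$. Order $\mathcal{S}$ by $(D,\sigma)\leq(D',\sigma')$ when $D\subseteq D'$ and $\sigma'|_D=\sigma$. The set $\mathcal{S}$ is nonempty because it contains $(D_0,\sigma_0)$.

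Let $\{(D_i,\sigma_i)\}$ be a nonempty chain in $\mathcal{S}$. Its union $D=\bigcup D_i$ is a subfield of $L$, since any two elements lie in a common $D_i$. The maps $\sigma_i$ are compatible along the chain, so they glue to a map $\sigma:D\to\Omega$. This $\sigma$ is a ring homomorphism because each operation is computed inside a single $D_i$, and it is injective because it is a unital homomorphism out of a field. Hence $(D,\sigma)$ is an upper bound for the chain, and Zorn's lemma gives a maximal element $(D,\sigma)$.

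It remains to show $D=L$, and this one-step extension is the only substantive point. Suppose $\alpha\in L\setminus D$. Since $L/D_0$ is algebraic, $\alpha$ is algebraic over $D$; let $p\in D[X]$ be its minimal polynomial. Apply $\sigma$ to the coefficients to get $p^\sigma\in\sigma(D)[X]\subseteq\Omega[X]$. This polynomial is nonconstant, so it has a root $\beta\in\Omega$ because $\Omega$ is algebraically closed.

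Define $\tau:D[X]\to\Omega$ by $f\mapsto f^\sigma(\beta)$. This is a ring homomorphism, and its kernel is a proper ideal containing $(p)$. Since $(p)$ is maximal ($p$ is irreducible), $\ker\tau=(p)$. Therefore $\tau$ induces an embedding of $D[X]/(p)\cong D(\alpha)$ into $\Omega$, sending $\alpha\mapsto\beta$ and extending $\sigma$.

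The pair $(D(\alpha),\tau)$ strictly exceeds $(D,\sigma)$ in $\mathcal{S}$, contradicting maximality. Hence $D=L$, and $\sigma$ is the desired extension of $\sigma_0$.

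The main care is in this last step: identifying $D(\alpha)$ with $D[X]/(p)$ through evaluation at $\alpha$ (whose kernel is also $(p)$ by minimality), and checking that the composite map restricts to $\sigma$ on $D$.
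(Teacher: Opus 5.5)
Your proof is correct and takes essentially the same route as the paper: Zorn's lemma on partial extensions $(D,\sigma)$, followed by a one-step extension to $D(\alpha)$ that sends $\alpha$ to a root $\beta\in\Omega$ of $p^\sigma$. The only small difference is in how you get the kernel. You obtain $\ker\tau=(p)$ directly from the maximality of $(p)$ in $D[X]$. The paper instead first shows that $p^\sigma$ is irreducible and is the minimal polynomial of $\beta$ over $\sigma(D)$, and uses that to check well-definedness and injectivity, so your version is slightly more economical.
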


\begin{proof}
Consider the set $\mathcal{P}$ of all pairs $(D,\sigma)$ such that $D$ is an intermediate field with $D_0\subseteq D\subseteq L$ and $\sigma:D\to\Omega$ is a field embedding satisfying $\sigma|_{D_0}=\sigma_0$. This set is nonempty because $(D_0,\sigma_0)\in\mathcal{P}$.

Order $\mathcal{P}$ by extension: define
$(D_1,\sigma_1)\preceq(D_2,\sigma_2)$
if $D_1\subseteq D_2$ and $\sigma_2|_{D_1}=\sigma_1$. This is a partial order.

We verify that every chain has an upper bound. Let $\{(D_\lambda,\sigma_\lambda):\lambda\in I\}$ be a nonempty chain in $\mathcal{P}$, and set
$D_*=\bigcup_{\lambda\in I}D_\lambda$.
The set $D_*$ is a subfield of $L$ containing $D_0$. Indeed, given $x,y\in D_*$, the chain property allows us to find a single field $D_\lambda$ containing both elements. Their sum, difference, and product then belong to $D_\lambda\subseteq D_*$. If $x\neq 0$, its inverse also belongs to any $D_\lambda$ containing $x$. Thus $D_*$ is a field.

Define $\sigma_*:D_*\to\Omega$ by setting $\sigma_*(x)=\sigma_\lambda(x)$ whenever $x\in D_\lambda$. This definition is independent of $\lambda$. To see this, suppose that $x\in D_\lambda\cap D_\mu$. Since the pairs form a chain, one extends the other, so their embeddings agree on the smaller domain and hence agree at $x$.

For $x,y\in D_*$, choose a member of the chain containing both. Since its embedding preserves addition and multiplication, so does $\sigma_*$. Moreover, $\sigma_*(1)=1$. If $\sigma_*(x)=0$, choose $D_\lambda$ containing $x$. Then $\sigma_\lambda(x)=0$, and injectivity of $\sigma_\lambda$ gives $x=0$. Thus $\sigma_*$ is a field embedding. It extends every $\sigma_\lambda$ and agrees with $\sigma_0$ on $D_0$. Therefore $(D_*,\sigma_*)$ is an upper bound for the chain. The empty chain has the upper bound $(D_0,\sigma_0)$.

By Zorn's lemma, $\mathcal{P}$ has a maximal element, which we denote by $(D,\sigma)$. We prove that $D=L$.

Suppose, to the contrary, that $D\neq L$, and choose $a\in L\setminus D$. Since $L/D_0$ is algebraic, the element $a$ satisfies a nonzero polynomial with coefficients in $D_0$. As $D_0\subseteq D$, the same polynomial shows that $a$ is algebraic over $D$.

Let $p(X)\in D[X]$ be the monic minimal polynomial of $a$ over $D$. Write
$p(X)=X^d+c_{d-1}X^{d-1}+\cdots+c_0$.
Then $p(a)=0$, and $p$ is irreducible over $D$.

The embedding $\sigma$ is an isomorphism from $D$ onto its image $\sigma(D)$. Applying it coefficientwise defines a ring isomorphism
$\sigma_*:D[X]\to\sigma(D)[X]$,
given by
$f(X)=\sum_j c_jX^j\longmapsto f^\sigma(X)=\sum_j\sigma(c_j)X^j$.
This isomorphism preserves degrees and polynomial products.

In particular,
$p^\sigma(X)=X^d+\sigma(c_{d-1})X^{d-1}+\cdots+\sigma(c_0)$
is monic and irreducible in $\sigma(D)[X]$. Indeed, a factorization of $p^\sigma$ into two polynomials of positive degree over $\sigma(D)$ would pull back under the coefficientwise inverse of $\sigma$ to such a factorization of $p$ over $D$, contradicting its irreducibility.

Since $\Omega$ is algebraically closed, the nonconstant polynomial $p^\sigma\in\Omega[X]$ has a root $b\in\Omega$. Thus $p^\sigma(b)=0$. The element $b$ is algebraic over $\sigma(D)$, and its monic minimal polynomial over $\sigma(D)$ divides $p^\sigma$. Since $p^\sigma$ is monic and irreducible over $\sigma(D)$, that minimal polynomial must be $p^\sigma$ itself.

We now construct an embedding of $D(a)$ into $\Omega$ extending $\sigma$. First, recall that the evaluation map
$D[X]\to D[a],\qquad f\mapsto f(a)$
has kernel $(p)$, because $p$ is the minimal polynomial of $a$. Hence $D[a]\cong D[X]/(p)$. Since $p$ is irreducible over the field $D$, the quotient $D[X]/(p)$ is a field. Therefore $D[a]$ is already a field, so $D[a]=D(a)$. In particular, every element of $D(a)$ can be written as $f(a)$ for some $f\in D[X]$.

Define
$\widetilde{\sigma}:D(a)\to\Omega$
by
$\widetilde{\sigma}(f(a))=f^\sigma(b)$.

We check that this map is well defined. Suppose that $f(a)=g(a)$ for $f,g\in D[X]$. Then $(f-g)(a)=0$, so $p$ divides $f-g$ in $D[X]$. Applying the coefficient isomorphism shows that $p^\sigma$ divides $f^\sigma-g^\sigma$ in $\sigma(D)[X]$. Evaluating at $b$ and using $p^\sigma(b)=0$, we obtain $f^\sigma(b)=g^\sigma(b)$. Thus the value assigned by $\widetilde{\sigma}$ does not depend on the polynomial representation of the element.

The map $\widetilde{\sigma}$ preserves addition and multiplication. Indeed, for $f,g\in D[X]$, we have
$$\widetilde{\sigma}(f(a)+g(a))=(f+g)^\sigma(b)=f^\sigma(b)+g^\sigma(b)$$
and
$$\widetilde{\sigma}(f(a)g(a))=(fg)^\sigma(b)=f^\sigma(b)g^\sigma(b).$$
Also, $\widetilde{\sigma}(1)=1$. Hence $\widetilde{\sigma}$ is a unital ring homomorphism.

It is injective. If $\widetilde{\sigma}(f(a))=0$, then $f^\sigma(b)=0$. Since $p^\sigma$ is the minimal polynomial of $b$ over $\sigma(D)$, it divides $f^\sigma$. Applying the inverse coefficient isomorphism shows that $p$ divides $f$. Consequently, $f(a)=0$. Thus the kernel of $\widetilde{\sigma}$ is zero, and $\widetilde{\sigma}$ is a field embedding.

For every $c\in D$, taking the constant polynomial $f(X)=c$ gives $\widetilde{\sigma}(c)=\sigma(c)$. Thus $\widetilde{\sigma}$ extends $\sigma$, and taking $f(X)=X$ gives $\widetilde{\sigma}(a)=b$.

Because $D\subseteq L$ and $a\in L$, the field $D(a)$ is an intermediate field between $D_0$ and $L$. Moreover, $a\notin D$ implies $D\subsetneq D(a)$. Therefore $(D(a),\widetilde{\sigma})$ is an element of $\mathcal{P}$ strictly extending $(D,\sigma)$, contradicting maximality.

Hence $D=L$. The embedding $\sigma:L\to\Omega$ associated with the maximal pair satisfies $\sigma|_{D_0}=\sigma_0$, as required.
\end{proof}

\begin{proposition}
{\rm There exists a field isomorphism
$$
\Phi:E\longrightarrow\mathbb{C}
$$
such that
$$
\Phi(j(a))=a\qquad(a\in K).
$$}
\end{proposition}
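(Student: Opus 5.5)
We build $\Phi$ as the extension of an isomorphism between purely transcendental subfields over $K$, using Lemma~8.1 for the cardinal count and Lemma~8.2 for the extension. Put $K'=j(K)\subseteq E$. By Proposition~7.2, $j:K\to K'$ is a field isomorphism, and $K'$ is countable. By Proposition~7.4 we have $|E|=\mathfrak c$, which is uncountable, so Lemma~8.1 applied to $E/K'$ gives a transcendence basis $T_E$ of $E$ over $K'$ with $|T_E|=\mathfrak c$. Likewise $K\subseteq\mathbb{C}$, $K$ is countable (Lemma~4.3) and $|\mathbb{C}|=\mathfrak c$, so Lemma~8.1 gives a transcendence basis $T_{\mathbb C}$ of $\mathbb{C}$ over $K$ with $|T_{\mathbb C}|=\mathfrak c$. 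Fix a bijection $\beta:T_E\to T_{\mathbb C}$.

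Next we define $\sigma_0:K'(T_E)\to\mathbb{C}$. Algebraic independence of $T_E$ over $K'$ means that $K'[T_E]$ is the polynomial ring over $K'$ in the variables $T_E$, and similarly $K[T_{\mathbb C}]$ is a polynomial ring over $K$. The map $\sum c_\alpha \tau^\alpha\mapsto\sum j^{-1}(c_\alpha)\beta(\tau)^\alpha$ is then a well-defined ring isomorphism $K'[T_E]\to K[T_{\mathbb C}]$, because both sides are free on their monomials. It extends uniquely to the fraction fields, giving an isomorphism $\sigma_0:K'(T_E)\to K(T_{\mathbb C})\subseteq\mathbb{C}$ with $\sigma_0(j(a))=a$ for $a\in K$.

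Since $E$ is algebraic over $K'(T_E)$ (the definition of a transcendence basis) and $\mathbb{C}$ is algebraically closed, Lemma~8.2 extends $\sigma_0$ to a field embedding $\Phi:E\to\mathbb{C}$. This $\Phi$ still satisfies $\Phi(j(a))=a$.

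The only remaining point, and the main one requiring care, is surjectivity. Because $E$ is algebraically closed (Proposition~7.2), its image $\Phi(E)$ is an algebraically closed subfield of $\mathbb{C}$. Moreover $\Phi(E)$ contains $K(T_{\mathbb C})$, over which $\mathbb{C}$ is algebraic. Take $z\in\mathbb{C}$. Then $z$ is a root of a nonzero polynomial over $K(T_{\mathbb C})\subseteq\Phi(E)$. That polynomial splits into linear factors over the algebraically closed field $\Phi(E)$, so all of its roots in $\mathbb{C}$ lie in $\Phi(E)$; in particular $z\in\Phi(E)$. Hence $\Phi$ is surjective and therefore a field isomorphism fixing $K$ in the stated sense.

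I do not expect a real obstacle. The cardinal equality $|T_E|=|T_{\mathbb C}|$ is exactly what Lemma~8.1 supplies, and the one point to state explicitly is that algebraic independence is what makes $\sigma_0$ well defined and injective on the rational function fields.
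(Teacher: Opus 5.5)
Your proposal is correct and follows the paper's proof step for step. You take transcendence bases of cardinality $\mathfrak c$ over $j(K)$ and over $K$ via Lemma~8.1, and build an isomorphism of the rational function fields from a bijection of the bases together with $j^{-1}$ on coefficients. You then extend it to an embedding $E\to\mathbb{C}$ by Lemma~8.2, and obtain surjectivity because the algebraically closed image contains $K(T_{\mathbb C})$, over which $\mathbb{C}$ is algebraic. The only cosmetic difference is that you finish surjectivity by splitting a polynomial over $\Phi(E)$, whereas the paper uses the minimal polynomial of $z$ over $\Phi(E)$; the two arguments are equivalent.
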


\begin{proof}
We retain the distinction between $K\subseteq\mathbb{C}$ and its diagonal image $j(K)\subseteq E$. Put $F=j(K)$. Since $j:K\to E$ is an injective field homomorphism, the map
$\eta:F\to K,\qquad \eta(j(a))=a$
is a well-defined field isomorphism.

By Proposition~7.2, the field $E$ is algebraically closed, and by Proposition~7.4 it has cardinality $\mathfrak{c}$. The field $\mathbb{C}$ is also algebraically closed and has cardinality $\mathfrak{c}$. Since $K$ is countable and $j$ is injective, both $K$ and $F=j(K)$ are countable. Applying Lemma~8.1 to the extensions $E/F$ and $\mathbb{C}/K$, we obtain
$\operatorname{trdeg}_F E=\mathfrak{c}$
and
$\operatorname{trdeg}_K\mathbb{C}=\mathfrak{c}$.

Choose a transcendence basis $T\subseteq E$ of $E$ over $F$, and a transcendence basis $S\subseteq\mathbb{C}$ of $\mathbb{C}$ over $K$. Thus $T$ is algebraically independent over $F$ and $E$ is algebraic over $F(T)$; similarly, $S$ is algebraically independent over $K$ and $\mathbb{C}$ is algebraic over $K(S)$. The equalities of transcendence degrees give $|T|=|S|=\mathfrak{c}$, so we may choose a bijection $\theta:T\to S$.

We first construct an isomorphism between the corresponding rational function fields. Let $F[T]$ denote the subring of $E$ generated by $F$ and $T$, and let $K[S]$ denote the subring of $\mathbb{C}$ generated by $K$ and $S$.

Every element of $F[T]$ can be written as $f(t_1,\ldots,t_r)$ for some finite list of distinct elements $t_1,\ldots,t_r\in T$ and some polynomial $f\in F[X_1,\ldots,X_r]$. Let $f^\eta\in K[X_1,\ldots,X_r]$ be the polynomial obtained by applying $\eta$ to every coefficient of $f$. Define
$$\psi(f(t_1,\ldots,t_r))=f^\eta(\theta(t_1),\ldots,\theta(t_r)).$$

We verify that this definition is independent of the chosen polynomial expression. Suppose that two expressions represent the same element of $F[T]$. By adjoining any missing variables, both expressions can be written using a common finite list $t_1,\ldots,t_r$ from $T$. Thus we may suppose that
$f(t_1,\ldots,t_r)=g(t_1,\ldots,t_r)$
for polynomials $f,g\in F[X_1,\ldots,X_r]$. Then $(f-g)(t_1,\ldots,t_r)=0$. Algebraic independence of $T$ over $F$ implies that $f-g$ is the zero polynomial. Applying $\eta$ to its coefficients gives $f^\eta=g^\eta$, and therefore the two prescribed images are equal. Hence $\psi$ is well defined.

Coefficient transport and substitution preserve addition and multiplication, so $\psi$ is a unital ring homomorphism. Applying the same construction with $\eta^{-1}$ and $\theta^{-1}$ gives an inverse map from $K[S]$ to $F[T]$. The algebraic independence of $S$ ensures that this inverse is well defined by the same argument. Consequently,
$\psi:F[T]\to K[S]$
is a ring isomorphism satisfying $\psi(c)=\eta(c)$ for $c\in F$ and $\psi(t)=\theta(t)$ for $t\in T$.

The fraction fields of $F[T]$ and $K[S]$ are $F(T)$ and $K(S)$, respectively. Extend $\psi$ to a map
$\phi_0:F(T)\to K(S)$
by defining
$\phi_0(p/q)=\psi(p)/\psi(q)$
for $p,q\in F[T]$ with $q\neq 0$.

The denominator is nonzero because $\psi$ is injective. To verify independence of the fraction representation, suppose that $p/q=p'/q'$ with $q,q'\neq 0$. Then $pq'=p'q$ in the integral domain $F[T]$. Applying $\psi$ gives $\psi(p)\psi(q')=\psi(p')\psi(q)$, so the resulting fractions are equal. Thus $\phi_0$ is well defined. It preserves addition and multiplication by the usual rules for fractions, and the corresponding extension of $\psi^{-1}$ is its inverse. Therefore $\phi_0$ is a field isomorphism.

In particular, $\phi_0$ extends $\eta$ on $F$ and sends every $t\in T$ to $\theta(t)$. Hence
$\phi_0(j(a))=a$
for every $a\in K$.

Since $E/F(T)$ is algebraic and $\mathbb{C}$ is algebraically closed, Lemma~8.2 applies to the embedding $\phi_0:F(T)\to\mathbb{C}$. It gives a field embedding
$\Phi:E\to\mathbb{C}$
such that $\Phi|_{F(T)}=\phi_0$.

It remains to prove that $\Phi$ is surjective. Let $M=\Phi(E)$, which is a subfield of $\mathbb{C}$. Since $\Phi$ extends $\phi_0$ and $\phi_0(F(T))=K(S)$, we have
$K(S)\subseteq M\subseteq\mathbb{C}$.

We first show that $M$ is algebraically closed. Let
$h(X)=c_dX^d+\cdots+c_0\in M[X]$
be nonconstant, with $d\geq 1$ and $c_d\neq 0$. For each coefficient, choose its preimage $a_r\in E$ under $\Phi$, so $\Phi(a_r)=c_r$. Since $\Phi$ is injective, $a_d\neq 0$. Thus
$g(X)=a_dX^d+\cdots+a_0\in E[X]$
is nonconstant.

Because $E$ is algebraically closed, there exists $u\in E$ such that $g(u)=0$. Applying $\Phi$ yields
$h(\Phi(u))=\Phi(g(u))=0$.
The element $\Phi(u)$ belongs to $M$, so every nonconstant polynomial over $M$ has a root in $M$. Hence $M$ is algebraically closed.

Next, $\mathbb{C}$ is algebraic over $M$. Indeed, $\mathbb{C}$ is algebraic over $K(S)$ because $S$ is a transcendence basis. For any $z\in\mathbb{C}$, there is therefore a nonzero polynomial $f\in K(S)[X]$ with $f(z)=0$. Since $K(S)\subseteq M$, the same polynomial belongs to $M[X]$ and remains nonzero. Thus $z$ is algebraic over $M$.

An algebraically closed field has no proper algebraic field extension. To apply this fact explicitly, fix $z\in\mathbb{C}$ and let $m_z(X)\in M[X]$ be its monic minimal polynomial over $M$. This polynomial is irreducible and has positive degree. Since $M$ is algebraically closed, it has a root $w\in M$. The factor theorem gives $X-w$ as a factor of $m_z$. Irreducibility therefore forces $m_z(X)=X-w$. Evaluating at $z$ gives $z=w\in M$.

Thus every element of $\mathbb{C}$ belongs to $M$, so $\Phi(E)=M=\mathbb{C}$. Therefore $\Phi$ is a field isomorphism.

Finally, for every $a\in K$, we have $j(a)\in F\subseteq F(T)$, and hence
$\Phi(j(a))=\phi_0(j(a))=\eta(j(a))=a$.
Thus the isomorphism has the required restriction to the diagonal copy of $K$.
\end{proof}

\section{The lattice order on the usual complex field}

\begin{theorem}
{\rm Let $t\in\mathbb{R}$ satisfy $t>1$ and be transcendental over $\mathbb{Q}$. There exists a partial order $\preceq$ on the usual complex field such that
$$
\begin{aligned}
z\preceq w&\ \Longrightarrow\ z+a\preceq w+a
&& (z,w,a\in\mathbb{C}),\\
0\preceq z,\ 0\preceq w&\ \Longrightarrow\ 0\preceq zw
&& (z,w\in\mathbb{C}),
\end{aligned}
$$
and every pair has a least upper bound and a greatest lower bound. The order can be chosen so that
$$
0\prec1,\ 0\vee i=t\text{ and }1\wedge t=0.
$$
It restricts to the usual order on $\mathbb{Q}$ and to the order $\leq_K$ on the subfield $K$ constructed above.}
\end{theorem}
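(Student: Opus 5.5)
The plan is to transport the lattice order of $E$ to $\mathbb{C}$ through the isomorphism $\Phi$ of Proposition~8.3. Define
$$z\preceq w\iff \Phi^{-1}(z)\leq_E\Phi^{-1}(w)\qquad(z,w\in\mathbb{C}).$$
Since $\Phi$ is a bijection, $\preceq$ is a partial order by Proposition~7.3. For translation invariance, write $z\preceq w$ as $\Phi^{-1}(z)\leq_E\Phi^{-1}(w)$. Add $\Phi^{-1}(a)$ in $E$ and use $\Phi^{-1}(z+a)=\Phi^{-1}(z)+\Phi^{-1}(a)$. The multiplicative condition follows in the same way from $\Phi^{-1}(zw)=\Phi^{-1}(z)\Phi^{-1}(w)$ and $0_E=\Phi^{-1}(0)$. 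Lattice completeness is also transported: $\Phi$ is an order isomorphism $(E,\leq_E)\to(\mathbb{C},\preceq)$, so we may put $z\vee w=\Phi(\Phi^{-1}(z)\vee_E\Phi^{-1}(w))$, and similarly for meets. That these are the least upper and greatest lower bounds is immediate, because order isomorphisms preserve suprema and infima.

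Next comes the restriction to $K$. For $a,b\in K$, the relation $\Phi(j(a))=a$ gives $\Phi^{-1}(a)=j(a)$. Hence
$$a\preceq b\iff j(a)\leq_E j(b)\iff a\leq_K b,$$
because $j$ preserves and reflects order (Proposition~7.3). Joins behave the same way. Since $j$ preserves binary joins and meets,
$$a\vee b=\Phi(j(a)\vee_E j(b))=\Phi(j(a\vee_K b))=a\vee_K b,$$
and similarly $a\wedge b=a\wedge_K b$. This is a point that must be stated explicitly: the join computed in $\mathbb{C}$ of two elements of $K$ is the join computed in $K$. An abstract order embedding alone would not guarantee this, but here it is guaranteed by the preservation of joins in Proposition~7.3 together with $\Phi$ being an order isomorphism.

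The prescribed identities then follow from Proposition~6.2, with $B$ chosen as there (the construction of Theorem~5.1 with $B_1=\{t,t-i\}$). That proposition gives $0\vee_K i=t$ and $1\wedge_K t=0$, hence $0\vee i=t$ and $1\wedge t=0$ in $\mathbb{C}$. The restriction to $\mathbb{Q}\subseteq K$ is the usual order by Proposition~6.1. In particular $0\preceq 1$ and $0\neq1$, so $0\prec1$.

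The main obstacle is bookkeeping rather than mathematics. One must check that the fixed choice of $B$ in Proposition~6.2 is the one used to build $\leq_K$, then $E$, then $\Phi$. One must also make sure the identities are read for the actual complex numbers $t,i\in K\subseteq\mathbb{C}$, which is exactly where $\Phi\circ j=\mathrm{id}_K$ is used. I would set up the construction with that choice from the start, so that all later propositions refer to the same $\leq_K$.
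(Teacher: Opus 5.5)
Your proposal is correct and follows the paper's proof essentially step by step. The paper also transports $\leq_E$ through $\Phi$; it phrases this via the cone $P_{\mathbb{C}}=\Phi(P_E)$, which it shows is equivalent to your definition $\Phi^{-1}(z)\leq_E\Phi^{-1}(w)$. It then obtains joins and meets as $\Phi(\Phi^{-1}(z)\vee_E\Phi^{-1}(w))$ and $\Phi(\Phi^{-1}(z)\wedge_E\Phi^{-1}(w))$. Finally it uses $\Phi^{-1}(a)=j(a)$ for $a\in K$, together with the order-reflection and join/meet preservation of $j$, to recover $\leq_K$, the rational order, $0\prec1$, and the identities of Proposition~6.2. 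One wording point: what you transport is the existence of binary joins and meets, not ``lattice completeness'' in the sense of arbitrary suprema.
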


\begin{proof}
Fix a real number $t>1$ that is transcendental over $\mathbb{Q}$. Let $K\subseteq\mathbb{C}$ be the countable algebraically closed subfield constructed above, and choose the basis $B$ as in Theorem~5.1 and Proposition~6.2. In particular, $K$ contains $\mathbb{Q}$, $t$, and $i$.

Put $P_K=P_B$, and let $\leq_K$ be the compatible lattice order supplied by Proposition~6.1. Its restriction to $\mathbb{Q}$ is the usual order, and Propositions~6.1 and~6.2 give
$0<_K1$,
$0\vee_K i=t$,
and
$1\wedge_K t=0$.

Choose the ultrafilter $\mathcal{U}$ from Lemma~7.1, and let $E=K^{\mathbb{N}}/\mathcal{U}$. By Proposition~7.3, the field $E$ carries a compatible lattice order $\leq_E$, and the diagonal embedding
$j:K\to E,\qquad j(a)=[(a,a,\ldots)]$
preserves and reflects order and preserves joins and meets.

By Proposition~8.3, there is a field isomorphism $\Phi:E\to\mathbb{C}$ satisfying $\Phi(j(a))=a$ for every $a\in K$. Since $\Phi$ is bijective, this also gives
$\Phi^{-1}(a)=j(a)$
for every $a\in K$.

Let $P_E=\{X\in E:0_E\leq_E X\}$ be the nonnegative cone of $E$. Explicitly,
$P_E=\{[(x_n)]\in E:\{n:x_n\in P_K\}\in\mathcal{U}\}$.
This description is independent of representatives by Proposition~7.3.

Define
$P_{\mathbb{C}}=\Phi(P_E)$
and define a relation $\preceq$ on $\mathbb{C}$ by
$z\preceq w\iff w-z\in P_{\mathbb{C}}$.
All additions and multiplications on $\mathbb{C}$ appearing below are its usual field operations.

The relation can equivalently be described by transporting the order on $E$ through $\Phi$. Indeed, because $\Phi^{-1}$ is a field homomorphism,
$\Phi^{-1}(w-z)=\Phi^{-1}(w)-\Phi^{-1}(z)$.
Consequently,
$z\preceq w\iff\Phi^{-1}(w-z)\in P_E\iff\Phi^{-1}(z)\leq_E\Phi^{-1}(w)$.

For completeness, we verify directly the cone properties and the resulting order properties. Since $\leq_E$ is a compatible partial order, its nonnegative cone satisfies
$0_E\in P_E$,
$P_E+P_E\subseteq P_E$,
$P_E\cap(-P_E)=\{0_E\}$,
and
$P_EP_E\subseteq P_E$.

Since $\Phi(0_E)=0$, we have $0\in P_{\mathbb{C}}$. If $z,w\in P_{\mathbb{C}}$, there exist $X,Y\in P_E$ such that $z=\Phi(X)$ and $w=\Phi(Y)$. As $X+Y\in P_E$ and $\Phi$ preserves addition, we obtain
$z+w=\Phi(X)+\Phi(Y)=\Phi(X+Y)\in P_{\mathbb{C}}$.
Thus $P_{\mathbb{C}}+P_{\mathbb{C}}\subseteq P_{\mathbb{C}}$.

If both $z$ and $-z$ belong to $P_{\mathbb{C}}$, then $X=\Phi^{-1}(z)$ and $-X=\Phi^{-1}(-z)$ both belong to $P_E$. Hence $X=0_E$, so $z=0$. Therefore
$P_{\mathbb{C}}\cap(-P_{\mathbb{C}})=\{0\}$.

If $z,w\in P_{\mathbb{C}}$, write again $z=\Phi(X)$ and $w=\Phi(Y)$ with $X,Y\in P_E$. Since $XY\in P_E$ and $\Phi$ preserves multiplication,
$zw=\Phi(X)\Phi(Y)=\Phi(XY)\in P_{\mathbb{C}}$.
Thus $P_{\mathbb{C}}P_{\mathbb{C}}\subseteq P_{\mathbb{C}}$.

We now check that $\preceq$ is a partial order. For every $z\in\mathbb{C}$, we have $z-z=0\in P_{\mathbb{C}}$, so $z\preceq z$. If $z\preceq w$ and $w\preceq z$, then both $w-z$ and $z-w=-(w-z)$ belong to $P_{\mathbb{C}}$. The intersection property implies $w-z=0$, so $z=w$. Finally, if $z\preceq w$ and $w\preceq u$, then
$u-z=(u-w)+(w-z)\in P_{\mathbb{C}}$,
and hence $z\preceq u$. This proves reflexivity, antisymmetry, and transitivity.

The order is translation invariant. For any $z,w,a\in\mathbb{C}$, the identity
$(w+a)-(z+a)=w-z$
gives
$z\preceq w\iff z+a\preceq w+a$.

It is also compatible with multiplication. If $0\preceq z$ and $0\preceq w$, then $z,w\in P_{\mathbb{C}}$, so the multiplicative closure of this cone gives $zw\in P_{\mathbb{C}}$. Equivalently,
$0\preceq z,\ 0\preceq w\implies 0\preceq zw$.
More generally, if $z\preceq w$ and $0\preceq a$, then
$aw-az=a(w-z)\in P_{\mathbb{C}}$,
so $az\preceq aw$.

We next prove that every pair of complex numbers has a least upper bound and a greatest lower bound. Let $z,w\in\mathbb{C}$, and put $X=\Phi^{-1}(z)$ and $Y=\Phi^{-1}(w)$. Define
$u=\Phi(X\vee_E Y)$
and
$l=\Phi(X\wedge_E Y)$.

Since $X\leq_E X\vee_E Y$ and $Y\leq_E X\vee_E Y$, the order-transport equivalence established above gives $z\preceq u$ and $w\preceq u$. Thus $u$ is an upper bound of $z$ and $w$.

Suppose that $v\in\mathbb{C}$ is any other upper bound, so $z\preceq v$ and $w\preceq v$. Transporting these inequalities to $E$ gives
$X\leq_E\Phi^{-1}(v)$
and
$Y\leq_E\Phi^{-1}(v)$.
By the least-upper-bound property in $E$,
$X\vee_E Y\leq_E\Phi^{-1}(v)$.
Transporting this inequality back to $\mathbb{C}$ yields $u\preceq v$. Therefore $u$ is the least upper bound, and
$z\vee w=\Phi\bigl(\Phi^{-1}(z)\vee_E\Phi^{-1}(w)\bigr)$.

Similarly, $X\wedge_E Y\leq_E X$ and $X\wedge_E Y\leq_E Y$ imply $l\preceq z$ and $l\preceq w$. If $v$ is any other lower bound of $z$ and $w$, then
$\Phi^{-1}(v)\leq_E X$
and
$\Phi^{-1}(v)\leq_E Y$.
The greatest-lower-bound property in $E$ gives
$\Phi^{-1}(v)\leq_E X\wedge_E Y$,
and hence $v\preceq l$. Thus $l$ is the greatest lower bound, and
$z\wedge w=\Phi\bigl(\Phi^{-1}(z)\wedge_E\Phi^{-1}(w)\bigr)$.
Consequently, $\preceq$ is a compatible lattice order on the usual complex field.

It remains to verify its restriction to $K$ and the specified identities. Let $a,b\in K$. Since $\Phi^{-1}(a)=j(a)$ and $\Phi^{-1}(b)=j(b)$, we have
$a\preceq b\iff j(a)\leq_E j(b)\iff a\leq_K b$,
where the last equivalence uses the fact that $j$ preserves and reflects order. Thus the restriction of $\preceq$ to $K$ is precisely $\leq_K$.

The lattice operations on $K$ are preserved as well. Since $j$ preserves joins,
$$a\vee b=\Phi\bigl(j(a)\vee_E j(b)\bigr)=\Phi\bigl(j(a\vee_K b)\bigr)=a\vee_K b.$$
Likewise, preservation of meets gives
$$a\wedge b=\Phi\bigl(j(a)\wedge_E j(b)\bigr)=\Phi\bigl(j(a\wedge_K b)\bigr)=a\wedge_K b.$$

In particular, for $p,q\in\mathbb{Q}\subseteq K$, we obtain
$p\preceq q\iff p\leq_K q\iff p\leq q$,
where the final inequality is the usual rational order. Therefore $\preceq$ restricts to the usual order on $\mathbb{Q}$.

Write $z\prec w$ when $z\preceq w$ and $z\neq w$. Since $0<_K1$ and the order on $K$ is preserved, we have $0\preceq 1$. As $0\neq 1$ in $\mathbb{C}$, this gives $0\prec 1$. Finally, the identities from Proposition~6.2 transfer without change:
$0\vee i=0\vee_K i=t$
and
$1\wedge t=1\wedge_K t=0$.
\end{proof}

\begin{proposition}
Fix the choices made in the construction. Given $z,w\in\mathbb{C}$, choose representatives
$$
\Phi^{-1}(z)=[(x_n)],\
\Phi^{-1}(w)=[(y_n)],
$$
and write their unique basis expansions
$$
x_n=\sum_{b\in B}x_{n,b}b,\
y_n=\sum_{b\in B}y_{n,b}b, x_{n,b},y_{n,b}\in\mathbb{Q}.
$$
Then
$$
z\preceq w
\quad\Longleftrightarrow\quad
\bigl\{n\mid\forall b\in B\ x_{n,b}\leq y_{n,b}\bigr\}\in\mathcal{U}.
$$
Moreover,
$$
z\vee w
=\Phi[(
\sum_{b\in B}\max\{x_{n,b},y_{n,b}\}\,b
)_{n\geq1}]
$$
and
$$
z\wedge w
=\Phi[(\sum_{b\in B}\min\{x_{n,b},y_{n,b}\}\,b
)_{n\geq1}]$$
Every sum is finite for each fixed $n$; no uniform finite support across all $n$ is required.
\end{proposition}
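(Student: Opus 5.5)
The plan is to unwind the definitions through the three layers already built: the transport through $\Phi$ in Theorem~9.1, the ultrapower order of Proposition~7.3, and the coordinate order on $K$ from Proposition~6.1. The comparison formula comes first. By the proof of Theorem~9.1, $z\preceq w$ holds if and only if $\Phi^{-1}(z)\leq_E\Phi^{-1}(w)$. By the definition of $\leq_E$ in Proposition~7.3, this is equivalent to $\{n: x_n\leq_K y_n\}\in\mathcal{U}$; that proposition also shows this condition does not depend on the chosen representatives. Finally, Proposition~6.1(1) says that $x_n\leq_K y_n$ if and only if $x_{n,b}\leq y_{n,b}$ for every $b\in B$. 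So the two index sets coincide, and the displayed equivalence follows.

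For the join, Theorem~9.1 gives $z\vee w=\Phi\bigl(\Phi^{-1}(z)\vee_E\Phi^{-1}(w)\bigr)$. Proposition~7.3 identifies $\Phi^{-1}(z)\vee_E\Phi^{-1}(w)$ with $[(x_n\vee_K y_n)]$, and Proposition~6.1(2) gives $x_n\vee_K y_n=\sum_{b\in B}\max\{x_{n,b},y_{n,b}\}b$ for each $n$. Composing these equalities yields the stated formula. The meet is handled in exactly the same way, using Proposition~6.1(3) and the meet clause of Proposition~7.3.

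The only point needing a remark is the finiteness claim. For each fixed $n$, the coefficient $\max\{x_{n,b},y_{n,b}\}$ vanishes outside $\operatorname{supp}_B(x_n)\cup\operatorname{supp}_B(y_n)$, which is finite; the same holds for the minima. Hence each coordinate sum defines an element of $K$, and the resulting sequence lies in $K^{\mathbb{N}}$. No bound on the supports that is uniform in $n$ is needed, because the ultrapower places no such constraint on sequences. I expect no real obstacle here. The one subtlety is well-definedness, meaning independence from the chosen representatives $(x_n)$ and $(y_n)$, and this is already established in Proposition~7.3.
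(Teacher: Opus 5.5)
Your proposal is correct and follows essentially the same route as the paper. Both unwind the transport through $\Phi$ from Theorem~9.1, then the ultrapower order and its coordinatewise joins and meets from Proposition~7.3, then the coordinate description of $\leq_K$ from Proposition~6.1, and both check finiteness index by index. The paper additionally re-verifies independence of representatives at the level of coefficients, but as you note this already follows from Proposition~7.3, because your index set coincides with $\{n: x_n\leq_K y_n\}$.
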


\begin{proof}
Fix the basis $B$, the ultrafilter $\mathcal{U}$, and the field isomorphism $\Phi:E\to\mathbb{C}$ chosen in the preceding construction. Let $z,w\in\mathbb{C}$, and choose representatives such that
$X=\Phi^{-1}(z)=[(x_n)]$
and
$Y=\Phi^{-1}(w)=[(y_n)]$.

For every $n\geq 1$, the basis property of $B$ gives unique expansions
$x_n=\sum_{b\in B}x_{n,b}b$
and
$y_n=\sum_{b\in B}y_{n,b}b$,
where $x_{n,b},y_{n,b}\in\mathbb{Q}$ and only finitely many coefficients are nonzero. All comparisons, maxima, and minima of these coefficients are taken in the usual order on $\mathbb{Q}$.

For each $n$, put
$S_n=\operatorname{supp}_B(x_n)\cup\operatorname{supp}_B(y_n)$.
This is a finite subset of $B$. If $b\notin S_n$, then $x_{n,b}=y_{n,b}=0$.

We first establish the comparison formula. By Proposition~6.1, for every fixed $n$,
$x_n\leq_K y_n\iff x_{n,b}\leq y_{n,b}\text{ for every }b\in B$.
Since both coefficients vanish outside $S_n$, the condition on the right can also be checked using only the finite set $S_n$.

It follows that the two subsets of $\mathbb{N}$
$\{n:x_n\leq_K y_n\}$
and
$C=\{n:(\forall b\in B)\ x_{n,b}\leq y_{n,b}\}$
are equal. The definition of the transported order on $\mathbb{C}$ and Proposition~7.3 therefore give
$$z\preceq w\iff X\leq_E Y\iff\{n:x_n\leq_K y_n\}\in\mathcal{U}\iff C\in\mathcal{U}.$$
This is the asserted comparison formula. The universal condition over $B$ is evaluated at each fixed index $n$ when defining the set $C$.

We next prove the formulas for joins and meets. For each $n\geq 1$, define
$u_n=\sum_{b\in B}\max\{x_{n,b},y_{n,b}\}b$
and
$v_n=\sum_{b\in B}\min\{x_{n,b},y_{n,b}\}b$.
These expressions are well defined as elements of $K$. Indeed, if $b\notin S_n$, then both coefficients are zero, so their maximum and minimum are also zero. Thus
$\operatorname{supp}_B(u_n)\subseteq S_n$
and
$\operatorname{supp}_B(v_n)\subseteq S_n$.
Both sums are consequently finite.

By the coordinate formulas in Proposition~6.1, we have
$u_n=x_n\vee_K y_n$
and
$v_n=x_n\wedge_K y_n$
for every $n$. Applying Proposition~7.3 to the representative sequences gives
$X\vee_E Y=[(x_n\vee_K y_n)]=[(u_n)]$
and
$X\wedge_E Y=[(x_n\wedge_K y_n)]=[(v_n)]$.

The lattice operations on $\mathbb{C}$ were defined by transport through $\Phi$. Hence
$$z\vee w=\Phi(X\vee_E Y)=\Phi([(u_n)]),$$
which yields
$$z\vee w=\Phi\left(\left[\left(\sum_{b\in B}\max\{x_{n,b},y_{n,b}\}b\right)_{n\geq 1}\right]\right).$$
Likewise,
$$z\wedge w=\Phi(X\wedge_E Y)=\Phi([(v_n)]),$$
and therefore
$$z\wedge w=\Phi\left(\left[\left(\sum_{b\in B}\min\{x_{n,b},y_{n,b}\}b\right)_{n\geq 1}\right]\right).$$

We finally verify explicitly that all three formulas are independent of the representative sequences. Suppose that $(x'_n)$ and $(y'_n)$ are alternative representatives of $X$ and $Y$, respectively. Then
$H_x=\{n:x_n=x'_n\}\in\mathcal{U}$
and
$H_y=\{n:y_n=y'_n\}\in\mathcal{U}$.
Thus $H=H_x\cap H_y\in\mathcal{U}$.

Write the unique basis expansions of the new representatives as
$x'_n=\sum_{b\in B}x'_{n,b}b$
and
$y'_n=\sum_{b\in B}y'_{n,b}b$.
For every $n\in H$, the equalities $x_n=x'_n$ and $y_n=y'_n$, together with uniqueness of basis coordinates, imply
$x_{n,b}=x'_{n,b}$
and
$y_{n,b}=y'_{n,b}$
for every $b\in B$ simultaneously.

Let
$C'=\{n:(\forall b\in B)\ x'_{n,b}\leq y'_{n,b}\}$.
The preceding coordinate equalities show that $C\cap H=C'\cap H$. If $C\in\mathcal{U}$, then $C\cap H\in\mathcal{U}$, and the inclusion $C\cap H\subseteq C'$ implies $C'\in\mathcal{U}$. The same argument with $C$ and $C'$ interchanged proves the converse. Therefore
$C\in\mathcal{U}\iff C'\in\mathcal{U}$,
so the comparison formula is independent of the representatives.

Define $u'_n$ and $v'_n$ from the new coefficients by the same maximum and minimum formulas. For every $n\in H$, equality of all corresponding coefficients gives $u_n=u'_n$ and $v_n=v'_n$. Hence the equality sets
$\{n:u_n=u'_n\}$
and
$\{n:v_n=v'_n\}$
both contain $H$ and therefore belong to $\mathcal{U}$. Consequently,
$[(u_n)]=[(u'_n)]$
and
$[(v_n)]=[(v'_n)]$.
Applying the fixed map $\Phi$ proves that the displayed join and meet formulas are also independent of representatives.

Throughout the argument, finiteness is required separately for each index $n$. The finite set $S_n$ may vary with $n$: each $u_n$ and $v_n$ is an element of $K$, so the resulting sequences belong to $K^{\mathbb{N}}$. No common finite support for all indices is needed.
\end{proof}

\vspace{0.2cm}
{\bf AI Declaration:} The author declares that all mathematical arguments and results presented in this manuscript are original and were developed and verified by the author. No AI-based tools or large language models were used at any stage of this work, including mathematical reasoning, writing, language editing, or typesetting. The author takes full responsibility for the content of the paper.

\end{document}